\documentclass{amsart}
\usepackage{amssymb}
\usepackage{mathrsfs}
\usepackage[all]{xy}
\usepackage{tikz}
\usepackage{hyperref}
\usetikzlibrary{matrix,arrows}

\newcommand{\bk}{\Bbbk}
\newcommand{\C}{\mathbb{C}}
\newcommand{\F}{\mathbb{F}}
\newcommand{\Z}{\mathbb{Z}}

\newcommand{\D}{\mathrm{D}}
\newcommand{\Db}{\D^{\mathrm{b}}}
\newcommand{\Kb}{\mathrm{K}^{\mathrm{b}}}

\newcommand{\Sh}{\mathrm{Sh}}

\newcommand{\cG}{\mathcal{G}}
\newcommand{\cT}{\mathcal{T}}
\newcommand{\cB}{\mathcal{B}}
\newcommand{\cP}{\mathcal{P}}
\newcommand{\cU}{\mathcal{U}}

\newcommand{\sA}{\mathscr{A}}
\newcommand{\sB}{\mathscr{B}}
\newcommand{\sC}{\mathscr{C}}
\newcommand{\bDelta}{\bar{\Delta}}
\newcommand{\bnabla}{\bar{\nabla}}
\newcommand{\fDelta}{\mathscr{F}(\Delta)}
\newcommand{\fnabla}{\mathscr{F}(\nabla)}
\newcommand{\fbDelta}{\mathscr{F}(\bDelta)}
\newcommand{\fbnabla}{\mathscr{F}(\bnabla)}
\newcommand{\Irr}{\mathrm{Irr}}
\newcommand{\il}{\imath^{\mathrm{L}}}
\newcommand{\ir}{\imath^{\mathrm{R}}}
\newcommand{\pil}{\Pi^{\mathrm{L}}}
\newcommand{\pir}{\Pi^{\mathrm{R}}}

\newcommand{\Gv}{G^\vee}
\newcommand{\Bv}{B^\vee}
\newcommand{\Tv}{T^\vee}
\newcommand{\Uv}{U^\vee}
\newcommand{\Cent}{\mathrm{Z}}

\newcommand{\Go}{{G_{\mathbf{O}}}}
\newcommand{\Gr}{\mathcal{G}r}
\newcommand{\bX}{\mathbf{X}}
\newcommand{\bXp}{\mathbf{X}^+}

\newcommand{\ubk}{\underline{\bk}}

\newcommand{\Perv}{\mathrm{Perv}}
\newcommand{\Tilt}{\mathrm{Tilt}}
\newcommand{\Parity}{\mathrm{Parity}}
\newcommand{\ParityB}{\mathrm{Parity}_{(\cB)}}
\newcommand{\ParityGo}{\mathrm{Parity}_{(\Go)}}
\newcommand{\Dbom}{\Db_{\Omega}}
\newcommand{\Dbb}{\Db_{(\cB)}}
\newcommand{\Dbgo}{\Db_{(\Go)}}

\newcommand{\DGo}{\Db_{(\Go)}(\Gr,\bk)}
\newcommand{\PervGo}{\Perv_\Go(\Gr,\bk)}
\newcommand{\Sat}{\mathcal{S}}
\newcommand{\Evsat}{\mathbb{S}}
\newcommand{\Pvsat}{\hat{\mathbb{S}}}

\newcommand{\IC}{\mathrm{IC}}
\newcommand{\cE}{\mathcal{E}}
\newcommand{\cIstd}{\mathcal{I}_!}
\newcommand{\cIcos}{\mathcal{I}_*}

\newcommand{\Ad}{\mathrm{Ad}}
\newcommand{\cN}{{\mathcal{N}}}
\newcommand{\tcN}{{\tilde{\cN}}}
\newcommand{\cO}{\mathcal{O}}
\newcommand{\cON}{\mathcal{O}_\cN}
\newcommand{\reg}{{\mathrm{reg}}}
\newcommand{\Dist}{\mathrm{Dist}}
\newcommand{\fe}{\mathsf{e}}
\newcommand{\Lie}{\mathrm{Lie}}

\newcommand{\irr}{\mathrm{L}}
\newcommand{\til}{\mathrm{T}}
\newcommand{\wey}{\mathrm{M}}
\newcommand{\cow}{\mathrm{N}}

\newcommand{\Coh}{\mathrm{Coh}}
\newcommand{\Gm}{{\mathbb{G}_{\mathrm{m}}}}

\newcommand{\CohGm}{\Coh^{\Gv \times \Gm}}
\newcommand{\CohGmN}{{\Coh^{\Gv \times \Gm}(\cN)}}
\newcommand{\CohN}{\Coh(\cN)}
\newcommand{\PCohGmN}{\mathrm{P}\CohGmN}
\newcommand{\PCohN}{\mathrm{P}\CohN}

\newcommand{\Rep}{\mathrm{Rep}}

\newcommand{\cH}{\mathcal{H}} 
\newcommand{\cF}{\mathcal{F}}

\newcommand{\simto}{\overset{\sim}{\to}}
\newcommand{\la}{\langle}
\newcommand{\ra}{\rangle}

\newcommand{\pt}{\mathrm{pt}}

\newcommand{\aq}{\mathbin{/\!/}}
\newcommand{\bigaq}{\mathbin{\big/\!\big/}}

\DeclareMathOperator{\chr}{char}
\DeclareMathOperator{\End}{End}
\DeclareMathOperator{\Hom}{Hom}
\DeclareMathOperator{\Ext}{Ext}
\DeclareMathOperator{\cok}{cok}
\newcommand{\uHom}{\underline{\mathrm{Hom}}}
\newcommand{\uEnd}{\underline{\mathrm{End}}}
\newcommand{\uExt}{\underline{\mathrm{Ext}}}
\newcommand{\Lotimes}{\mathchoice%
  {\overset{\scriptscriptstyle L}{\otimes}}%
  {\otimes^{\scriptscriptstyle L}}{\otimes^L}{\otimes^L}}

\DeclareMathOperator{\tdim}{tdim}

\numberwithin{equation}{section}
\newtheorem{thm}{Theorem}[section]
\newtheorem*{thm*}{Theorem}
\newtheorem{lem}[thm]{Lemma}
\newtheorem{prop}[thm]{Proposition}
\newtheorem{cor}[thm]{Corollary}
\theoremstyle{definition}
\newtheorem{defn}[thm]{Definition}

\theoremstyle{remark}
\newtheorem{rmk}[thm]{Remark}

\title[Parity sheaves on the affine Grassmannian]{Parity sheaves on the affine Grassmannian and\\ the Mirkovi\'c--Vilonen conjecture}

\author{Pramod N. Achar}
\address{Department of Mathematics\\
  Louisiana State University\\
  Baton Rouge, LA 70803\\
  U.S.A.}
\email{pramod@math.lsu.edu}

\author{Laura Rider}
\address{Department of Mathematics\\
  Massachusetts Institute of Technology\\
  Cambridge, MA 02139\\
  U.S.A.}
\email{laurajoy@mit.edu}

\subjclass[2010]{Primary 22E57; Secondary 14F05}
\thanks{P.A. was supported by NSF Grant No.~DMS-1001594. L.R. was supported by an NSF Postdoctoral Research Fellowship.}

\begin{document}

\begin{abstract} We prove the Mirkovi\'c--Vilonen conjecture: the integral local intersection cohomology groups of spherical Schubert varieties on the affine Grassmannian have no $p$-torsion, as long as $p$ is outside a certain small and explicitly given set of prime numbers. 
(Juteau has exhibited counterexamples when $p$ is a bad prime.)
The main idea is to convert this topological question into an algebraic question about perverse-coherent sheaves on the dual nilpotent cone using the 
Juteau--Mautner--Williamson 
theory of parity sheaves.
\end{abstract}

\maketitle

\section{Introduction}
\label{sect:intro}

\subsection{Overview}

Let $G$ be a connected complex reductive group, and let $\Gr$ denote its affine Grassmannian.  This space has the remarkable property that its topology encodes the representation theory of the split Langlands dual group $\Gv$ over any field $\bk$ (or even over a commutative ring).  To be more precise, the \emph{geometric Satake equivalence}, in the form due to Mirkovi\'c--Vilonen~\cite{mv} 
(see also~\cite{lusztig,gin:pslg}), asserts that there is an equivalence of tensor categories
\begin{equation}\label{eqn:satake}
\Sat: \Rep(\Gv) \simto \Perv_\Go(\Gr,\bk)
\end{equation}
where $\Perv_\Go(\Gr,\bk)$ is the category of spherical perverse $\bk$-sheaves on $\Gr$.  (A full explanation of the notation is given in~\S\ref{subsect:notation} below.)  This result raises the possibility of comparing representation theory over different fields via the universal coefficient theorem of topology.

For instance, let $\lambda$ be a dominant coweight for $G$, and let $\cIstd(\lambda,\bk)$ denote the ``standard" perverse sheaf on the corresponding stratum of $\Gr$.  This perverse sheaf serves as a topological realization of a Weyl module for $\Gv$.  When $\bk = \C$, it is simple, and its stalks are described by Kazhdan--Lusztig theory.

 With a view to applications in modular representation theory, Mirkovi\'c and Vilonen conjectured in the late 1990s~\cite{mv1} that that the stalks of $\cIstd(\lambda, \Z)$ are torsion-free. This implies that the $\bk$-stalks are ``independent'' of $\bk$. Their conjecture was slightly too optimistic: counterexamples due to Juteau~\cite{juteau} reveal the presence of torsion, but only at bad primes.  Juteau proposed a modified conjecture, asserting that there is no $p$-torsion as long as $p$ is a good prime for $\Gv$.  
In this paper, we prove the following result, confirming this conjecture in nearly all cases.

\begin{thm}\label{thm:mv-intro}
If $p$ is a JMW prime for $\Gv$ {\normalfont (see Table~\ref{tab:jmw})}, then the stalks of $\cIstd(\lambda,\Z)$ have no $p$-torsion.  Similarly, if $\bk$ is a field whose characteristic is a JMW prime, then the stalks of $\cIstd(\lambda,\bk)$ have a parity-vanishing property.
\end{thm}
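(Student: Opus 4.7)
The plan is to reduce the integral statement about $\cIstd(\lambda,\Z)$ to the parity-vanishing statement for $\cIstd(\lambda,\bk)$ with $\chr(\bk)=p$ a JMW prime, via the universal coefficient theorem. Indeed, if some stalk of $\cIstd(\lambda,\Z)$ had $p$-torsion, then base change to $\bk=\F_p$ would create a $\mathrm{Tor}$ class in a cohomological degree of the parity opposite to the one carrying the free part, contradicting a parity-vanishing conclusion in which stalks of $\cIstd(\lambda,\bk)$ sit in a single parity relative to the dimension function on the $\Go$-orbits. So the real content is the second half of the theorem.

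To prove that parity statement, I would convert the question from topology to algebra using the Juteau--Mautner--Williamson theory. By the very definition of a JMW prime, the indecomposable $\Go$-equivariant parity sheaves $\cE(\lambda)$ on $\Gr$ are perverse, and under $\Sat$ they correspond to the indecomposable tilting modules $\til(\lambda)\in\Rep(\Gv)$. This gives an additive equivalence $\ParityGo(\Gr,\bk)\simeq\Tilt(\Rep(\Gv))$. The core of the plan is to upgrade this into a triangulated equivalence between $\Kb(\ParityGo(\Gr,\bk))$ (equivalently $\Dbgo$) and the derived category $\Db\CohGmN$ of $\Gv\times\Gm$-equivariant coherent sheaves on the nilpotent cone $\cN$ of $\Gv$, endowed with its Arkhipov--Bezrukavnikov--Ginzburg perverse-coherent t-structure; under this equivalence the tilting parity sheaves should match the tilting objects of the graded highest-weight category $\PCohGmN$.

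With such an equivalence in hand, the final step is to identify $\cIstd(\lambda,\bk)$ with a standard (or dually, costandard) object of $\PCohGmN$. Because that category is graded highest-weight, each (co)standard admits a finite tilting (co)resolution. Transporting this resolution back to the topological side turns $\cIstd(\lambda,\bk)$ into a bounded complex of parity sheaves $\cE(\mu)$, each having stalks concentrated in one parity. Tracking the internal $\Gm$-grading shifts, which encode the cohomological shifts on the topological side, then reads off the prescribed parity-vanishing property for $\cIstd(\lambda,\bk)$.

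The hard part, I expect, will be the construction of this derived equivalence and the precise matching of the heart structures. Concretely, one must compute the graded $\Ext$-algebra $\bigoplus_n\Ext^n(\cE(\lambda),\cE(\mu))$ in $\Dbgo$ and show it agrees with the bigraded $\Ext$-algebra between the corresponding tilting perverse-coherent sheaves---a formality-style statement in which the $\Gm$-weight on the coherent side tracks the cohomological degree on the topological side. Pinning down which (co)standard perverse-coherent object is the image of $\cIstd(\lambda,\bk)$, and then ensuring that the combinatorics of the tilting resolution give exactly the desired parity on stalks, is the main obstacle and constitutes the heart of the proof.
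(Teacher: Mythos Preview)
Your reduction of the integral statement to the field case via base change and $\mathrm{Tor}$ is correct and is exactly what the paper does (Theorem~\ref{thm:mvconj}). Your identification of the JMW hypothesis with an equivalence $\ParityGo(\Gr)\simeq\Tilt(\PCohN)$ is also on target. But the proposed route through a global derived equivalence and tilting resolutions has genuine gaps. First, $\PCohGmN$ is \emph{not} highest-weight, only properly stratified (see the remark after Theorem~\ref{thm:myron}); in particular $\Kb(\Tilt)\to\Db\PCohN$ is fully faithful but not essentially surjective (Proposition~\ref{prop:ringel-duality}), and $\wey(\lambda)\otimes\cON$ is not itself a standard object---it merely admits a standard filtration. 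More seriously, even granting a tilting resolution $0\to X\to T^0\to T^1\to\cdots$ transported to a complex $\cE^0\to\cE^1\to\cdots$ of parity sheaves, you do not get $*$-parity of stalks: the differentials are degree-$0$ maps, so realisation introduces shifts $[i]$ from the complex index and mixes parities, and nothing in your outline controls the $\Gm$-twists appearing in each $T^i$ well enough to rule this out.

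The paper avoids all of this by working one stratum at a time rather than building a derived equivalence (which it explicitly defers to a sequel). Fix $\mu\le\lambda$, set $\Gamma=\{\nu<\mu\}$, and restrict to the open set $U=\Gr\setminus\Gr_\Gamma$; there $\cT(\mu)|_U\cong i_{\mu*}\ubk_{\Gr_\mu}[\dim\Gr_\mu]$, so $\Hom^\bullet(\cIstd(\lambda)|_U,\cT(\mu)|_U)\cong\Hom^\bullet(i_\mu^*\cIstd(\lambda),\ubk)$ encodes the $\mu$-stalk. Theorem~\ref{thm:hom-tilt}, proved by induction on tilting dimension rather than via any derived equivalence, identifies this with $\uHom(\Pi(\wey(\lambda)\otimes\cON),\Pi(\til(\mu)\otimes\cON))$ in the Serre quotient $\PCohN/\PCohN_\Gamma$. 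Since $\mu$ is minimal there, $\Pi(\til(\mu)\otimes\cON)$ coincides with the costandard object (Lemma~\ref{lem:tilt-cotilt}), and Lemma~\ref{lem:delta-nabla-free} shows this $\uHom$ is \emph{free} over $\uEnd(\nabla_\mu)\cong H^\bullet(\Gr_\mu)$, generated in even degrees by Corollary~\ref{cor:pcoh-even}. Lemma~\ref{lem:even-free} then converts freeness plus even generation directly into $*$-evenness of $i_\mu^*\cIstd(\lambda)$. The matching of Serre quotients on the two sides (Propositions~\ref{prop:tilting-quot} and~\ref{prop:parity-quot}) and the freeness lemma are the ingredients your outline is missing.
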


\begin{table}[t]
\begin{center}
\begin{tabular}[t]{ l | c }
 Type & Bound \\
 \hline 
    $A_n$ & any $p$ \\
    $B_n, D_n$& $p>2$ \\
    $C_n$ & $p>n$ \\
 \hline
\end{tabular}
\qquad
\begin{tabular}[t]{ l | c }
 Type & Bound \\
 \hline 
    $E_6, F_4, G_2$& $p>3$\\
    $E_7$& $p>19$\\
    $E_8$& $p>31$\\
  \hline  
\end{tabular}
\bigskip
\end{center}
\caption{Currently known bounds$^1$ for JMW primes}\label{tab:jmw}
\end{table}

An outline of the proof will be explained below, after some preliminaries. In a subsequent paper~\cite{ard}, the authors exploit this result to establish a modular analogue of the derived equivalence of~\cite[Theorem~9.4.3]{abg}. 

\subsection{The constructible side}
\label{subsect:notation}

Recall that $\Gr = G_{\mathbf{K}}/\Go$, where $\mathbf{K} = \C((t))$ and $\mathbf{O} = \C[[t]]$.  For the remainder of the introduction, $\bk$ will denote an algebraically closed field.  Let $\DGo$ denote the bounded derived category of complexes of $\bk$-sheaves on $\Gr$ that are constructible with respect to the $\Go$-orbits, and let $\PervGo \subset \DGo$ be the subcategory of perverse sheaves.  Those $\Go$-orbits are naturally in bijection with the set $\bXp$ of dominant coweights for $G$.  For $\lambda \in \bXp$, let $i_\lambda: \Gr_\lambda \hookrightarrow \Gr$ be the inclusion map of the corresponding orbit.

For $\lambda \in \bXp$, the irreducible (resp.~Weyl, dual Weyl, indecomposable tilting) $\Gv$-module of highest weight $\lambda$ is denoted by $\irr(\lambda)$ (resp.~$\wey(\lambda)$, $\cow(\lambda)$, $\til(\lambda)$).  The perverse sheaves corresponding to these objects under $\Sat$ are denoted by $\IC(\lambda,\bk)$, (resp.~$\cIstd(\lambda,\bk)$, $\cIcos(\lambda,\bk)$, $\cT(\lambda,\bk)$).  Of course, $\IC(\lambda,\bk)$ is a simple perverse sheaf.  We saw $\cIstd(\lambda,\bk)$ earlier; $\cIcos(\lambda,\bk)$ is its Verdier dual, a costandard perverse sheaf.

What about the $\cT(\lambda,\bk)$?  It is a deep insight of Juteau--Mautner--Williamson that these perverse sheaves should be characterized by a topological property: specifically, they ought to be \emph{parity sheaves} in the sense of~\cite{jmw}.

\begin{defn}\label{defn:jmw}
A prime number $p$ is said to be a \emph{JMW prime} for $\Gv$ if it is good for $\Gv$ and, whenever $\bk$ has characteristic $p$, each $\cT(\lambda,\bk)$ is a parity sheaf on $\Gr$.
\end{defn}

Juteau, Mautner, and Williamson have shown in~\cite[Theorem~1.8]{jmw:parity} that for quasisimple $\Gv$, the primes in Table~\ref{tab:jmw} are JMW primes.  They conjecture\footnote{Since this paper appeared in preprint form, Mautner and Riche have proved that every good prime is a JMW prime~\cite{mr}, confirming~\cite[Conjecture~1.10]{jmw:parity}.} that every good prime is JMW (see~\cite[Conjecture~1.10]{jmw:parity}).  

\subsection{The coherent side}
\label{subsect:proof}

The main idea of the proof of Theorem~\ref{thm:mv-intro} is to translate the problem into an algebraic question about coherent sheaves on the nilpotent cone $\cN$ for $\Gv$.  The motivation comes from an old result of Ginzburg~\cite[Proposition~1.10.4]{gin:pslg}: when $\bk = \C$, he showed that for all $V_1, V_2 \in \Rep(\Gv)$, there is an isomorphism of graded vector spaces
\begin{equation}\label{eqn:ginzburg-intro}
\Hom^\bullet_{\DGo}(\Sat(V_1), \Sat(V_2)) \cong \Hom^\bullet_{\CohGmN}(V_1 \otimes \cON, V_2 \otimes \cON).
\end{equation}
For details on the category $\CohGmN$, see~\S\ref{subsect:pervcoh}.

To imitate this in positive characteristic, we need control over the algebraic geometry of $\cN$. To this end, we impose the following condition on $\Gv$:
\begin{equation}\label{eqn:reasonable}
\begin{minipage}{3.6in}
The derived group of $\Gv$ is simply connected, and its Lie
algebra admits a nondegenerate $\Gv$-invariant bilinear form.
\end{minipage}
\end{equation}
This condition holds for $\mathrm{GL}(n)$ and for every simply-connected quasi-simple group that is not of type $A$.  See~\cite[\S2.9]{jan:nort} for a discussion of other situations in which this holds.  Under this condition, it is feasible to adapt Ginzburg's argument, provided that $\Sat(V_1)$ and $\Sat(V_2)$ are parity.

To push this result further, we need the following observation: coherent sheaves of the form $V \otimes \cON$ also lie in the category of \emph{perverse-coherent sheaves}, denoted $\PCohGmN$, or simply $\PCohN$.  This category, which is the heart of a certain $t$-structure on $\Db\CohGmN$, looks very different from $\CohGmN$. For instance, every object of $\PCohN$ has finite length.  We will not use the details of its definition in this paper; we just require a structural property discussed in~\S\ref{subsect:pervcoh}.

Interpreting the right-hand side of~\eqref{eqn:ginzburg-intro} as a $\Hom$-group in $\PCohN$ leads to new avenues for generalizing that result.  For $\mu \in \bXp$, let $\PCohN_{\le \mu} \subset \PCohN$ be the Serre subcategory generated by $\cow(\nu) \otimes \cON\la n\ra$ with $\nu \le \mu$.  (Here, $\la n\ra$ indicates a twist of the $\Gm$-action.)  In~\S\ref{sect:extgr}, we prove the following result, which seems to be new even for $\bk = \C$.

\begin{thm}\label{thm:extgr-intro}
Assume that $\chr \bk$ is a JMW prime for $\Gv$, and that~\eqref{eqn:reasonable} holds for $\Gv$. Let $j: \Gr \setminus \overline{\Gr_\lambda} \to \Gr$ be the inclusion map,
and let $\Pi: \PCohN \to \PCohN/\PCohN_{\le \lambda}$ be the Serre quotient functor.
If $V_1 \in \Rep(\Gv)$ has a Weyl filtration and $V_2 \in \Rep(\Gv)$ has a good filtration, there is a natural isomorphism
\[
\Hom^\bullet(j^*\Sat(V_1), j^*\Sat(V_2)) \cong \Hom^\bullet(\Pi(V_1 \otimes \cON), \Pi(V_2 \otimes \cON)).
\]
\end{thm}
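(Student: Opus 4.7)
The plan is to bootstrap from the parity-case Ginzburg isomorphism---valid for tilting $V_1,V_2$ under our standing hypotheses on $\chr\bk$ and~\eqref{eqn:reasonable}, and established in the preceding sections---to the Weyl/good-filtered case by realizing both sides of the desired isomorphism as compatible localizations of global $\Hom$-spaces. The route is: (a) write each side as the image of $\Hom^\bullet_{\DGo}(\Sat(V_1),\Sat(V_2))$, respectively $\Hom^\bullet_{\Db\PCohN}(V_1\otimes\cON,V_2\otimes\cON)$, under a quotient functor; (b) invoke the parity Ginzburg isomorphism to identify the source spaces when both arguments are tilting; and (c) reduce the general Weyl/good case to the tilting case by devissage, matching the two quotients step by step.

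First I would unpack the localization structure on each side. On the topological side, the open-closed recollement for $j\colon \Gr\setminus\overline{\Gr_\lambda}\hookrightarrow\Gr$ and $i\colon\overline{\Gr_\lambda}\hookrightarrow\Gr$ gives the triangle $i_*i^!\to\id\to j_*j^*$, so
\[
\Hom^\bullet(j^*\Sat(V_1),j^*\Sat(V_2))=\Hom^\bullet(\Sat(V_1),j_*j^*\Sat(V_2))
\]
fits in a long exact sequence whose error term $\Hom^\bullet(\Sat(V_1),i_*i^!\Sat(V_2))$ captures morphisms through objects supported on $\overline{\Gr_\lambda}$. On the coherent side, the standard description of $\Hom$ in a Serre quotient expresses $\Hom^\bullet(\Pi(V_1\otimes\cON),\Pi(V_2\otimes\cON))$ as a colimit over subobjects (resp.\ quotients) lying in $\PCohN_{\le\lambda}$, with error controlled by morphisms factoring through $\PCohN_{\le\lambda}$. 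Since $\Sat$ sends Weyl modules to $\cIstd$ and dual Weyl modules to $\cIcos$, a devissage along the filtrations of $V_1$ and $V_2$, combined with the short exact sequences relating each $\wey(\mu)$ (resp.\ $\cow(\mu)$) to a tilting module $\til(\mu)$ whose cokernel (resp.\ kernel) has only higher weights, reduces the problem to the case where both $V_i$ are tilting---where the parity Ginzburg isomorphism directly applies.

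The main obstacle will be to verify that, under the parity Ginzburg isomorphism, the subspace of morphisms killed by $j^*$ on the topological side is carried to the subspace killed by $\Pi$ on the coherent side. This amounts to tracking generators: the costandard sheaves $\cIcos(\nu)$ with $\nu\le\lambda$ generate the relevant support-closed subcategory of $\DGo$, while under Satake they pair with $\cow(\nu)\otimes\cON$, which by definition generate $\PCohN_{\le\lambda}$. One then checks that the error long exact sequences on the two sides correspond via the parity isomorphism, so that the quotient isomorphism descends. Making this matching fully rigorous---combining the devissage with the compatibility of the parity isomorphism with the recollement on one side and the Serre quotient on the other---is where the bulk of the work will lie, and will likely require handling the derived-category incarnation of $\PCohN/\PCohN_{\le\lambda}$ (in particular, verifying that its $\Ext$-groups can be computed as the expected Verdier quotient of $\Ext$-groups in $\Db\PCohN$).
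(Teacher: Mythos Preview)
Your overall strategy---establish the isomorphism first for tilting $V_1,V_2$ using the parity Ginzburg result, then extend by d\'evissage to Weyl/good filtered modules---is the same as the paper's.  The execution, however, diverges in two places where your plan is underspecified and the paper has sharper tools.

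\textbf{Matching the two quotients.}  You identify the right obstacle: showing that $\ker(j^*)$ and $\ker(\Pi)$ correspond under the parity Ginzburg isomorphism.  Your proposed attack via ``tracking generators'' ($\cIcos(\nu)\leftrightarrow\cow(\nu)\otimes\cON$) and matching error terms in the recollement long exact sequences is vague, and the objects $i^!\Sat(V_2)$ you would need to analyze are complexes, not perverse sheaves.  The paper sidesteps all of this with a purely additive device: on each side there is a \emph{naive} quotient $\Parity(\Gr)\aq\Parity(\overline{\Gr_\lambda})$, resp.\ $\Tilt(\PCohN)\aq\Tilt(\PCohN_{\le\lambda})$, defined by killing morphisms that factor through the subcategory.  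Two separate propositions (one for parity sheaves, one for tilting objects in a properly stratified category) show that these naive quotients are equivalent to $\Parity(U_\Gamma)$ and $\Tilt(\PCohN/\PCohN_{\le\lambda})$ respectively.  Since the global equivalence $\Evsat$ obviously matches the two ``factors through'' ideals, the quotient equivalence $\Evsat_\Gamma$ drops out immediately.  This avoids any derived-category subtleties about the Serre quotient.

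\textbf{The d\'evissage.}  Your route---break $V_1$ into Weyl pieces along its filtration, then embed each $\wey(\mu)$ into $\til(\mu)$---passes through objects ($\cIstd(\mu)$) that are \emph{not} known to be parity, so you cannot invoke the Ginzburg isomorphism at the intermediate stage.  (Also, the cokernel of $\wey(\mu)\hookrightarrow\til(\mu)$ has a Weyl filtration with \emph{lower} weights, not higher.)  The paper instead resolves $V_1$ directly by tilting modules: any Weyl-filtered $V_1$ sits in a short exact sequence $0\to V_1\to T\to V_1'\to 0$ with $T$ tilting and $V_1'$ Weyl-filtered of strictly smaller \emph{tilting dimension}.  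Inducting on this invariant, one writes down a single diagram comparing the two resulting long exact sequences in $\Hom$; the coherent side is already short exact (standard vs.\ costandard filtration $\Rightarrow\Ext^1=0$), and an evenness argument forces the constructible side to be short exact as well, whence the five-lemma finishes.  This is cleaner than threading the quotient compatibility through a two-stage filtration.
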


Intuitively, this theorem gives us an algebraic counterpart in $\Db\CohGmN$ of the geometric notion of ``restricting to an open subset'' in $\Gr$.  Once we have that, it is not difficult to translate the problem of studying stalks of $\cIstd(\lambda,\bk)$ into an algebraic question about certain objects in $\PCohN$ and its quotients.  The latter question turns out to be quite easy (see Lemma~\ref{lem:delta-nabla-free}).

\subsection{Outline of the paper}

In~\S\ref{sect:propstrat}, we recall the necessary background on properly stratified categories and on $\PCohN$, largely following the work of Minn-Thu-Aye.   We review the theory of parity sheaves in~\S\ref{sect:parity}.  In~\S\ref{sect:ginzburg}, which can be read independently of the rest of the paper, we study the cohomology of parity sheaves on flag varieties of Kac--Moody groups, generalizing earlier results of Soergel and Ginzburg.
That result is a step on the way to Theorem~\ref{thm:extgr-intro}, which is proved in~\S\ref{sect:extgr}.  Finally, the main result, Theorem~\ref{thm:mv-intro}, is proved in~\S\ref{sect:mv}. 

\subsection{Acknowledgements}

We are grateful to D.~Juteau, C.~Mautner, S.~Riche, and G.~Will\-iam\-son for helpful comments on a previous draft of this paper.

\section{Properly stratified categories}
\label{sect:propstrat}

\subsection{Definition and background}
\label{subsect:propstrat-defn}

Let $\bk$ be a field, and let $\sC$ be a $\bk$-linear abelian category in which every object has finite length. Assume that $\sC$ is equipped with an automorphism $\la 1\ra: \sC \to \sC$, which we will refer to as the \emph{Tate twist}. For $X, Y \in \sC$, let $\uHom(X,Y)$ be the graded vector space given by
\[
\uHom(X,Y)_n = \Hom(X,Y \la n\ra).
\]
The Tate twist induces an action of $\Z$ on the set $\Irr(\sC)$ of isomorphism classes of simple objects in $\sC$.  Assume that this action is free, and let $\Omega = \Irr(\sC)/\Z$.  For each $\gamma \in \Omega$, choose a representative simple object $L_\gamma \in \sC$ whose isomorphism class lies in the $\Z$-orbit $\gamma \subset \Irr(\sC)$.  Thus, every simple object in $\sC$ is isomorphic to some $L_\gamma\la n\ra$ with $\gamma \in \Omega$ and $n \in \Z$.

Assume that $\Omega$ is equipped with a partial order $\le$, and that for any $\gamma \in \Omega$, the set $\{ \xi \in \Omega \mid \xi \le \gamma \}$ is finite. For any order ideal $\Gamma \subset \Omega$, let $\sC_\Gamma \subset \sC$ be the Serre subcategory generated by the simple objects $\{ L_\gamma\la n\ra \mid \text{$\gamma \in \Gamma$, $n \in \Z$} \}$.  (Recall that an \emph{order ideal} is a subset $\Gamma \subset \Omega$ such that if $\gamma \in \Gamma$ and $\xi \le \gamma$, then $\xi \in \Gamma$.)  As a special case, we write
\begin{equation}\label{eqn:defn-serre-le}
\sC_{\le \gamma} = \sC_{\{ \xi \in \Omega \mid \xi \le \gamma \}}.
\end{equation}
The category $\sC_{< \gamma}$ is defined similarly.

\begin{defn}\label{defn:propstrat}
Suppose $\sC$, $\Omega$, and $\le$ are as above.  We say that $\sC$ is a \emph{graded properly stratified category} if for each $\gamma \in \Omega$, the following conditions hold:
\begin{enumerate}
\item We have $\End(L_\gamma) \cong \bk$.\label{it:scalar}
\item There is an object $\bDelta_\gamma$ and a surjective morphism $\phi_\gamma: \bDelta_\gamma \to L_\gamma$ such that\label{it:propstd}
\[
\ker(\phi_\gamma) \in \sC_{< \gamma}\qquad\text{and}\qquad
\text{$\uHom(\bDelta_\gamma,L_\xi) = \uExt^1(\bDelta_\gamma,L_\xi) = 0$ if $\xi \not\ge \gamma$.}
\]
\item There is an object $\bnabla_\gamma$ and an injective morphism $\psi_\gamma: L_\gamma \to \bnabla_\gamma$ such that\label{it:propcostd}
\[
\cok(\psi_\gamma) \in \sC_{< \gamma}\qquad\text{and}\qquad
\text{$\uHom(L_\xi,\bnabla_\gamma) = \uExt^1(L_\xi,\bnabla_\gamma) = 0$ if $\xi \not\ge \gamma$.}
\]
\item In $\sC_{\le \gamma}$, $L_\gamma$ admits a projective cover $\Delta_\gamma \to L_\gamma$.  Moreover, $\Delta_\gamma$ admits a filtration whose subquotients are of the form $\bDelta_\gamma\la n\ra$ for various $n \in \Z$.\label{it:standard}
\item In $\sC_{\le \gamma}$, $L_\gamma$ admits an injective envelope $L_\gamma \to \nabla_\gamma$.  Moreover, $\nabla_\gamma$ admits a filtration whose subquotients are of the form $\bnabla_\gamma\la n\ra$ for various $n \in \Z$.\label{it:costandard}
\item We have $\uExt^2(\Delta_\gamma,\bnabla_\xi) = \uExt^2(\bDelta_\gamma,\nabla_\xi) = 0$ for all $\gamma, \xi \in \Omega$. \label{it:ext2}
\end{enumerate}
An object in $\sC$ is said to be \emph{standard} (resp.~\emph{costandard}, \emph{proper standard}, \emph{proper costandard}) if it is isomorphic to some $\Delta_\gamma\la n\ra$ (resp.~$\nabla_\gamma\la n\ra$, $\bDelta_\gamma\la n\ra$, $\bnabla_\gamma\la n\ra$).

More generally, a \emph{standard} (resp.~\emph{costandard}, \emph{proper standard}, \emph{proper costandard}) {filtration} of an object of $\sC$ is a filtration whose subquotients are all standard (resp.~costandard, proper standard, proper costandard) objects.
\end{defn}

Routine arguments (see~\cite[Lemma~1]{bez:qes}) show that when objects $\bDelta_\gamma$, $\bnabla_\gamma$, $\Delta_\gamma$, $\nabla_\gamma$ with the above properties exist, they are unique up to isomorphism.  It may happen that $\bDelta_\gamma \cong \Delta_\gamma$ and $\bnabla_\gamma \cong \nabla_\gamma$; in that case, $\sC$ is usually called a \emph{highest weight} or \emph{quasi-hereditary category}.  The class of objects in $\sC$ admitting a standard (resp.~costandard, proper standard, proper costandard) filtration is denoted
\[
\fDelta, \qquad\text{resp.}\qquad
\fnabla, \qquad \fbDelta, \qquad \fbnabla.
\]

The relationship between the notions above and the notion of a \emph{properly stratified algebra}~\cite{dlab, fm} is explained in~\cite{myron}.  In particular, results in ~\cite{myron} explain how to transfer results from the literature on properly stratified algebras to our setting.  For instance, the following result is a restatement of~\cite[Definition~4 and Theorem~5]{dlab}.

\begin{prop}\label{prop:recollement}
Let $\Gamma \subset \Omega$ be a finite order ideal.  Then the Serre quotient $\sC / \sC_\Gamma$ is again a graded properly stratified category, and $\Irr(\sC / \sC_\Gamma)/\Z$ is naturally identified with $\Omega \setminus \Gamma$.  Indeed, we have a recollement diagram
\[
\begin{tikzpicture}
\node (a) at (0,0) {$\Db\sC_\Gamma$};
\node (b) at (4,0) {$\Db\sC$};
\node (c) at (8,0) {$\Db(\sC/\sC_\Gamma)$};
\draw[->, >=angle 90] (a) -- (b);
\draw[->, >=angle 90, above] (a) to node {$\imath$} (b);
\draw[->, >=angle 90, above] (b) to node {$\Pi$} (c);
\draw[->,>=angle 90, bend right=30, above] (b)  to node {$\il$} (a);
\draw[->,>=angle 90, bend left=30, above] (b) to node {$\ir$} (a);
\draw[->,>=angle 90, bend right=30, above]  (c) to node {$\pil$} (b);
\draw[->,>=angle 90, bend left=30, above]  (c) to node {$\pir$} (b);
\end{tikzpicture}
\]
\end{prop}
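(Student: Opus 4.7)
The strategy is to build all required data on $\sC/\sC_\Gamma$ by applying the exact Serre quotient functor $\Pi$ to the corresponding data on $\sC$, verify axioms (1)--(6) of Definition~\ref{defn:propstrat}, and then invoke the Beilinson--Bernstein--Deligne formalism to produce the recollement diagram. I would carry out the steps in that order.

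The identification of simples is a standard consequence of Gabriel localization: because $\sC_\Gamma$ is the Serre subcategory generated by $\{L_\gamma\la n\ra : \gamma\in\Gamma,\, n\in\Z\}$, the simples of $\sC/\sC_\Gamma$ are precisely $\{\Pi(L_\gamma)\la n\ra : \gamma\in\Omega\setminus\Gamma,\, n\in\Z\}$, the Tate twist descends with free $\Z$-action, and down-sets in $\Omega\setminus\Gamma$ remain finite because $\Gamma$ is an order ideal. I would then set
\[
\bDelta_\gamma^{\Pi}:=\Pi(\bDelta_\gamma), \quad \bnabla_\gamma^{\Pi}:=\Pi(\bnabla_\gamma), \quad \Delta_\gamma^{\Pi}:=\Pi(\Delta_\gamma), \quad \nabla_\gamma^{\Pi}:=\Pi(\nabla_\gamma),
\]
with morphisms induced by $\phi_\gamma$ and $\psi_\gamma$. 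Exactness of $\Pi$ then yields the kernel/cokernel conditions in axioms~(2) and~(3), the filtrations in axioms~(4) and~(5), and $\End(\Pi L_\gamma)\cong\bk$ in axiom~(1). The Hom and $\Ext^j$ vanishing conditions in axioms~(2), (3), and~(6) follow from their ambient counterparts via the calculus of fractions presenting $\Hom_{\sC/\sC_\Gamma}$ as a filtered colimit of $\Hom_\sC$ over subobjects whose quotient lies in $\sC_\Gamma$ — the $j=1$ case interprets extensions as zigzags, and $j=2$ is handled by one further step of the same kind. Projectivity of $\Pi(\Delta_\gamma)$ in $(\sC/\sC_\Gamma)_{\le\gamma}$ follows from adjunction once one observes that $(\sC/\sC_\Gamma)_{\le\gamma}$ is the Serre quotient of $\sC_{\le\gamma}$ by $\sC_{\le\gamma}\cap\sC_\Gamma$.

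For the recollement, what remains is to show that $\imath:\Db\sC_\Gamma\to\Db\sC$ is fully faithful and admits both adjoints. Full faithfulness is equivalent to $\Ext_\sC^n(X,Y)=\Ext_{\sC_\Gamma}^n(X,Y)$ for all $X,Y\in\sC_\Gamma$ and $n\ge 0$. I would prove this by reducing, via the standard filtration of $X$ and the proper costandard filtration of $Y$, to the case $X=\Delta_\xi\la m\ra$ and $Y=\bnabla_\eta\la \ell\ra$ with $\xi,\eta\in\Gamma$; here axiom~(6) kills $\Ext^2$, the vanishing of $\Ext^1$ is a formal consequence of axioms~(2)--(3), and a dimension-shifting argument using axioms~(4)--(5) handles higher Ext. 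Given full faithfulness, the adjoints $\il,\ir$ of $\imath$ exist because $\sC_\Gamma$ is itself a graded properly stratified category (an easy verification from the axioms restricted to $\Gamma$) with enough projectives and injectives of bounded homological dimension, and $\pil,\pir$ are then produced by the Verdier-quotient formalism applied to the identification $\Db\sC/\Db\sC_\Gamma\simeq\Db(\sC/\sC_\Gamma)$.

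The principal technical obstacle is the full-faithfulness of $\imath$: everything else is either a direct application of exactness of $\Pi$ or a translation of hypotheses already built into the definition. The $\Ext^2$-vanishing in axiom~(6) is precisely what is needed to prevent Yoneda classes in $\sC$ from escaping $\sC_\Gamma$ at the first place where they could, and higher Ext-vanishing then follows by iterating standard and proper costandard filtrations. This is the content of the arguments of Dlab and Minn-Thu-Aye cited by the authors; a self-contained proof would essentially reproduce those arguments in our graded setting.
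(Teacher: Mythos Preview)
The paper does not give its own proof of this proposition; it presents it as a restatement of \cite[Definition~4 and Theorem~5]{dlab}, with the transfer to the graded categorical setting supplied by \cite{myron}. Your outline is a plausible sketch of what such a self-contained proof entails, and you correctly single out full faithfulness of $\imath$ on the derived level as the crux --- indeed you explicitly acknowledge that a full argument would reproduce Dlab's and Minn-Thu-Aye's. So in spirit your approach and the paper's agree: both defer to those references.

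One caution on the logical order of your final paragraph: you invoke the identification $\Db\sC/\Db\sC_\Gamma \simeq \Db(\sC/\sC_\Gamma)$ to produce $\pil$ and $\pir$, but that identification is normally a \emph{consequence} of the recollement rather than an input. In practice (as in \cite{cps,dlab}) one constructs the adjoints more directly --- for instance by inducting on $|\Gamma|$, or by using that objects with standard (resp.\ costandard) filtrations furnish enough acyclics for the relevant derived functors --- and the derived Verdier-quotient description follows. Similarly, your appeal to ``enough projectives and injectives of bounded homological dimension'' in $\sC_\Gamma$ is not immediate from the axioms and is again part of what the cited references establish; treating it as given risks circularity.
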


Here, the superscripts $\mathrm{L}$ and $\mathrm{R}$ indicate the left and right adjoints, respectively, of $\imath$ and $\Pi$.
An important property implied by the preceding proposition is that
\[
\uExt^k(\Delta_\gamma, \bnabla_\xi) = \uExt^k(\bDelta_\gamma,\nabla_\xi) = 0
\qquad\text{for all $k > 0$.}
\]
Also implicit in Proposition~\ref{prop:recollement} (or explicit in its proof) are the next two lemmas, which express the compatibility of the various functors with the properly stratified structure.  For analogues in the quasi-hereditary case, see~\cite{cps}.

\begin{lem}\label{lem:preserve-Tilt}
The functors $\imath$ and $\Pi$ are $t$-exact and preserve the property of having a standard (resp.~costandard, proper standard, proper costandard) filtration.
\end{lem}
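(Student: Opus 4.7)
The lemma splits into two claims---$t$-exactness and preservation of the four types of filtration---and I would handle them in turn. For $t$-exactness, both $\imath$ and $\Pi$ are derived extensions of exact functors at the abelian level (the inclusion of a Serre subcategory, respectively the Serre quotient), so $t$-exactness is immediate. Since a filtration is determined by its subquotients in the abelian category, the remaining question reduces to tracking what $\imath$ and $\Pi$ do on the four classes of stratification objects $\bDelta_\gamma, \bnabla_\gamma, \Delta_\gamma, \nabla_\gamma$.

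For $\imath$, the statement is almost tautological: if $\gamma \in \Gamma$, each of $\bDelta_\gamma^\sC, \bnabla_\gamma^\sC, \Delta_\gamma^\sC, \nabla_\gamma^\sC$ already lies in $\sC_{\le\gamma} \subseteq \sC_\Gamma$. The defining conditions in Definition~\ref{defn:propstrat}---kernel or cokernel in $\sC_{<\gamma}$, Hom and Ext$^1$ vanishing against $L_\xi$ with $\xi \not\ge \gamma$, and projective cover or injective envelope inside $\sC_{\le\gamma}$---refer only to the full subcategory $\sC_{\le\gamma}$, which does not change when we restrict the ambient category from $\sC$ to $\sC_\Gamma$. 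Hence the same objects serve as the stratification data of $\sC_\Gamma$, and $\imath$ preserves all four types of filtration.

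For $\Pi$, any stratification object with $\gamma \in \Gamma$ lies in $\sC_\Gamma$ and is sent to $0$, trivializing the filtration. For $\gamma \in \Omega \setminus \Gamma$, the core claim is $\Pi\bDelta_\gamma^\sC \cong \bDelta_\gamma^{\sC/\sC_\Gamma}$. Applying exact $\Pi$ to $0 \to \ker\phi_\gamma \to \bDelta_\gamma^\sC \to L_\gamma \to 0$ produces a surjection onto the simple $L_\gamma$ of $\sC/\sC_\Gamma$ with kernel in $(\sC/\sC_\Gamma)_{<\gamma}$. The required Hom- and Ext$^1$-vanishing in the quotient uses the order-ideal hypothesis crucially: since $\gamma \notin \Gamma$, every $\xi \in \Gamma$ satisfies $\xi \not\ge \gamma$, so the Ext-vanishing already present in $\sC$ covers \emph{all} simples $L_\xi$ with $\xi \not\ge \gamma$, and a standard localization argument (via $\pir$ from the recollement of Proposition~\ref{prop:recollement}) transfers the vanishing to $\sC/\sC_\Gamma$. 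The proper costandard case is dual. For $\Delta_\gamma^\sC$, one can either deduce the filtration from $\Pi$ applied to a proper standard filtration of $\Delta_\gamma^\sC$, or use that $\pil$ preserves projectivity (its right adjoint $\Pi$ being exact) to identify $\pil\Delta_\gamma^{\sC/\sC_\Gamma}$ with $\Delta_\gamma^\sC$ and conclude via $\Pi\pil = \id$; $\nabla_\gamma$ is dual. The main obstacle is this Ext$^1$-transfer step, where both the order-ideal structure of $\Gamma$ and the recollement are genuinely used; everything else is formal.
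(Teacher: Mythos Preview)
The paper does not supply its own proof: immediately before the statement, the authors say that this lemma and the next are ``implicit in Proposition~\ref{prop:recollement} (or explicit in its proof),'' and Proposition~\ref{prop:recollement} is itself presented as a restatement of Dlab's results on properly stratified algebras, transported to this axiomatic setting via~\cite{myron}. So there is nothing in the paper to compare your argument against beyond a literature pointer; your sketch is already more explicit than what appears there, and it follows the same general route one finds in the properly-stratified literature.

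Your outline is correct in shape, and you have rightly flagged the $\Ext^1$-transfer as the only non-formal step. Two small caveats on the details. First, ``a standard localization argument via $\pir$'' hides real work: adjunction identifies $\Ext^1_{\sC/\sC_\Gamma}(\Pi\bDelta_\gamma, \Pi L_\xi)$ with $\Hom_{\Db\sC}(\bDelta_\gamma, \pir\Pi L_\xi[1])$, and the recollement triangle $\imath\ir L_\xi \to L_\xi \to \pir\Pi L_\xi \to$ then forces you to control $\Hom(\bDelta_\gamma, \imath\ir L_\xi[2])$, which does not follow directly from the $\Ext^{\le 1}$-vanishing in Definition~\ref{defn:propstrat}; one typically handles this by induction on $|\Gamma|$ or by a direct Yoneda-extension computation in the quotient. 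Second, your alternative for $\Delta_\gamma$ via $\pil$ preserving projectivity needs adjustment, since $\Delta_\gamma$ is projective only in $\sC_{\le\gamma}$, not in all of $\sC$; you must first observe that the recollement restricts compatibly to $\sC_{\le\gamma}$. Neither point is a genuine obstruction---this is exactly what the Dlab reference supplies---but it is worth knowing that ``standard'' here means ``standard in the properly-stratified literature'' rather than ``immediate from the definitions.''
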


The remaining functors in the recollement diagram are not $t$-exact in general, but they do send certain classes of objects to the heart of the $t$-structure.

\begin{lem}\label{lem:preserve-Filt}
The functors $\il$ and $\pil$ preserve the property of having a standard or proper standard filtration.  The functors $\ir$ and $\pir$ preserve the property of having a costandard or proper costandard filtration.
\end{lem}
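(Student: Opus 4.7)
The plan is to exploit the symmetry of Definition~\ref{defn:propstrat} to reduce to one half of the statement, use a standard dévissage to reduce to single proper standard objects, and then invoke the recollement triangles of Proposition~\ref{prop:recollement} together with an Ext-vanishing calculation.

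Definition~\ref{defn:propstrat} is symmetric under the exchange of standards with costandards ($\bDelta \leftrightarrow \bnabla$, $\Delta \leftrightarrow \nabla$); on the recollement side this corresponds to swapping left and right adjoints. So it suffices to prove that $\il$ and $\pil$ preserve (proper) standard filtrations. Since $\imath$ and $\Pi$ are $t$-exact (Lemma~\ref{lem:preserve-Tilt}), their left adjoints $\il$ and $\pil$ are right $t$-exact. Applying one of them to a filtration and chasing the long exact sequences of cohomology, I reduce to the claims that (a) $\il(\bDelta_\gamma)$ and $\il(\Delta_\gamma)$ lie in $\sC_\Gamma$ with a (proper) standard filtration, and (b) $\pil(\bDelta_\gamma^\Pi)$ and $\pil(\Delta_\gamma^\Pi)$ lie in $\sC$ with a (proper) standard filtration. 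Here $\bDelta_\gamma^\Pi := \Pi(\bDelta_\gamma)$ and $\Delta_\gamma^\Pi := \Pi(\Delta_\gamma)$ are the proper standard and standard objects in $\sC/\sC_\Gamma$ for $\gamma \in \Omega \setminus \Gamma$, by Lemma~\ref{lem:preserve-Tilt}.

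For $\il(\bDelta_\gamma)$ with $\gamma \in \Gamma$, the identity $\il \imath = \id$ gives $\il(\bDelta_\gamma) = \bDelta_\gamma$, which is the corresponding proper standard in $\sC_\Gamma$ (the proper standards of $\sC_\Gamma$ and $\sC$ coincide on $\Gamma$ by downward closure). For $\gamma \notin \Gamma$, I claim $\il(\bDelta_\gamma) = 0$, which by adjunction is equivalent to $\uExt^k_\sC(\bDelta_\gamma, L_\xi) = 0$ for all $\xi \in \Gamma$ and $k \ge 0$. Downward closure of $\Gamma$ forces $\xi \not\ge \gamma$, whence $k=0,1$ follow from condition \descref{it:propstd}. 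For $k \ge 2$, I induct on $k$: the short exact sequence $0 \to L_\xi \to \nabla_\xi \to \nabla_\xi/L_\xi \to 0$ has $\nabla_\xi/L_\xi$ with all composition factors of the form $L_\zeta$ with $\zeta < \xi$, hence in $\sC_\Gamma$; dévissage over a composition series together with the inductive hypothesis gives $\uExt^{k-1}(\bDelta_\gamma, \nabla_\xi/L_\xi) = 0$, and the vanishing $\uExt^k(\bDelta_\gamma, \nabla_\xi) = 0$ for $k > 0$ from the remark just after Proposition~\ref{prop:recollement} then forces $\uExt^k(\bDelta_\gamma, L_\xi) = 0$ via the long exact sequence. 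Now the recollement triangle $\pil\Pi(\bDelta_\gamma) \to \bDelta_\gamma \to \imath\il(\bDelta_\gamma) \to$ collapses to $\pil(\bDelta_\gamma^\Pi) \cong \bDelta_\gamma$, settling the proper-standard cases. The standard cases then follow from the proper-standard cases by dévissage along the proper standard filtration of $\Delta_\gamma$ provided by condition \descref{it:standard}.

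The main obstacle is the cohomological induction establishing $\uExt^k(\bDelta_\gamma, L_\xi) = 0$; this rests crucially on the asymmetric vanishing $\uExt^k(\bDelta_\gamma, \nabla_\xi) = 0$ built into the properly stratified axioms. The remainder is routine recollement bookkeeping, consistent with the excerpt's remark that the lemma is implicit in the proof of Proposition~\ref{prop:recollement}.
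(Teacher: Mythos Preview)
Your argument is essentially correct and supplies details that the paper omits entirely (the paper simply asserts that this lemma is implicit in the proof of Proposition~\ref{prop:recollement}, deferring to~\cite{dlab} and~\cite{myron}). One small slip: in a properly stratified category it is \emph{not} generally true that the composition factors of $\nabla_\xi/L_\xi$ satisfy $\zeta < \xi$; since $\nabla_\xi$ may be filtered by several copies of $\bnabla_\xi\la n\ra$, factors $L_\xi\la n\ra$ can occur. What you actually need, and what is true, is that all composition factors lie in $\sC_{\le\xi}\subset\sC_\Gamma$, so the induction still goes through unchanged. With this correction your proof stands.
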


\subsection{Tilting objects}
\label{subsect:tilting}

In contrast with the quasi-hereditary case, there are, in general, two inequivalent notions of ``tilting'' in a properly stratified category.

\begin{defn}
A \emph{tilting object} is an object in $\fDelta \cap \fbnabla$.  A \emph{cotilting object} is an object in $\fbDelta \cap \fnabla$.
\end{defn}

The next proposition gives the classification of indecomposable tilting and cotilting objects.  (See~\cite{ahlu} for a similar statement for properly stratified algebras.)

\begin{prop}\label{prop:tilt-classif}
For each $\gamma \in \Omega$, there is an indecomposable tilting object $T_\gamma$, unique up to isomorphism, that fits into short exact sequences
\[
0 \to \Delta_\gamma \to T_\gamma \to X \to 0
\qquad\text{and}\qquad
0 \to Y \to T_\gamma \to \bnabla_\gamma \to 0
\]
with $X \in \fDelta_{<\gamma}$ and $Y \in \fbnabla_{\le \gamma}$.  Dually, there is an indecomposable cotilting object $T'_\gamma$, unique up to isomorphism, with short exact sequences
\[
0 \to \bDelta_\gamma \to T'_\gamma \to X' \to 0
\qquad\text{and}\qquad
0 \to Y' \to T'_\gamma \to \nabla_\gamma \to 0
\]
with $X' \in \fbDelta_{\le \gamma}$ and $Y' \in \fnabla_{< \gamma}$.  Moreover, every indecomposable tilting (resp.~cotilting) object is isomorphic to some $T_\gamma\la n\ra$ (resp.~$T'_\gamma\la n\ra$).
\end{prop}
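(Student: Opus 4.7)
The plan is to adapt Ringel's construction from the quasi-hereditary setting, following the treatment in~\cite{ahlu} and~\cite{myron} for properly stratified algebras. The key inputs are already available: the Ext-vanishing
\[
\uExt^{>0}(\Delta_\eta, \bnabla_\xi) = \uExt^{>0}(\bDelta_\eta, \nabla_\xi) = 0
\]
noted after Proposition~\ref{prop:recollement}, the finite-length Serre subcategory $\sC_{\le\gamma}$, and the filtration compatibilities of Lemma~\ref{lem:preserve-Filt}. Because $\{\xi \mid \xi \le \gamma\}$ is finite, we may carry out the construction inside $\sC_{\le\gamma}$.

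First I would build $T_\gamma$ by iterated universal extension. Set $M_0 = \Delta_\gamma$, and inductively form extensions
\[
0 \to M_i \to M_{i+1} \to \bigoplus_{\xi < \gamma} \Delta_\xi \otimes \uExt^1(\Delta_\xi, M_i)^* \to 0
\]
whose class is the identity in $\uExt^1(\bigoplus \Delta_\xi \otimes \uExt^1(\Delta_\xi, M_i)^*, M_i)$. A length argument in $\sC_{\le\gamma}$, combined with the Ext-vanishing above, shows that the total graded dimension of $\bigoplus_\xi \uExt^1(\Delta_\xi, M_i)$ strictly decreases, so the process terminates at some $T_\gamma$. By construction $T_\gamma$ lies in $\fDelta$, fits into the desired short exact sequence $0 \to \Delta_\gamma \to T_\gamma \to X \to 0$ with $X \in \fDelta_{<\gamma}$, and satisfies $\uExt^1(\Delta_\xi, T_\gamma) = 0$ for all $\xi$. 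A standard dévissage (quasi-hereditary version in~\cite{bez:qes}, transferred via~\cite{myron}) shows that any object in $\fDelta$ with this Ext-vanishing property automatically lies in $\fbnabla$, yielding $T_\gamma \in \fDelta \cap \fbnabla$. The top $\bnabla_\gamma$-quotient in the second short exact sequence is then extracted by a weight-support argument: $\bnabla_\gamma$ is the unique proper costandard factor labeled by $\gamma$ (comparison with the composition multiplicities of the standard filtration), and $\uHom(\bnabla_\xi, \bnabla_\gamma) = 0$ for $\xi < \gamma$ forces it to sit as a quotient rather than a sub.

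For indecomposability and uniqueness I would invoke a Fitting-type argument. Any $\phi \in \End(T_\gamma)$ restricts to a scalar on $\Delta_\gamma \subset T_\gamma$ by \descref{it:scalar}; the Ext-vanishing $\uExt^1(X, \Delta_\gamma) = 0$ for $X \in \fDelta_{<\gamma}$ then forces $\phi$ to be invertible if that scalar is nonzero and nilpotent otherwise, so $\End(T_\gamma)$ is local. The same lifting principle produces mutually inverse maps between any two candidates, proving uniqueness. To classify all indecomposable tiltings, take an indecomposable $T \in \fDelta \cap \fbnabla$, pick the $\le$-maximal weight $\gamma$ appearing in its standard filtration so that $\Delta_\gamma$ is a subobject, lift the inclusion both ways between $T$ and $T_\gamma$ using Ext-vanishing, and apply Krull--Schmidt to conclude $T \cong T_\gamma\la n\ra$ for some $n$.

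The cotilting statement is strictly dual, with $\Delta \leftrightarrow \bDelta$ and $\nabla \leftrightarrow \bnabla$ swapped. The main technical obstacle is the passage from $\uExt^1(\Delta_\xi, -) = 0$ to the existence of an actual proper costandard filtration --- precisely the step where the properly stratified setting diverges from the quasi-hereditary one (since $\bDelta_\gamma \ne \Delta_\gamma$ and $\bnabla_\gamma \ne \nabla_\gamma$ in general), and where the compatibility of $\il$ and $\pir$ with the filtration classes supplied by Lemma~\ref{lem:preserve-Filt} is crucial.
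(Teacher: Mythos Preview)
The paper does not supply a proof of this proposition at all; it simply refers the reader to \cite{ahlu} for the analogous result for properly stratified algebras, with the transfer to the present axiomatic framework implicit in \cite{myron}. Your overall strategy---Ringel's iterated universal extension starting from $\Delta_\gamma$, followed by a Fitting-type argument for indecomposability and uniqueness---is exactly the standard one used in those references, so in spirit you are doing what the paper intends.

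That said, there is a concrete error in your indecomposability paragraph. You assert that $\uExt^1(X,\Delta_\gamma)=0$ for $X\in\fDelta_{<\gamma}$, but this is false in general: if it held, the very short exact sequence $0\to\Delta_\gamma\to T_\gamma\to X\to 0$ you just constructed would split, so $T_\gamma$ would be decomposable whenever $X\neq 0$. The Ext-vanishing that actually powers the lifting argument is $\uExt^1(X,T_\gamma)=0$, which holds because $X\in\fDelta$ and $T_\gamma\in\fbnabla$; this is what lets you extend a map $\Delta_\gamma\to T'$ to a map $T_\gamma\to T'$ for another candidate $T'$, and likewise in the other direction. Relatedly, ``restricts to a scalar on $\Delta_\gamma$'' is imprecise: condition~\descref{it:scalar} concerns $\End(L_\gamma)$, not $\End(\Delta_\gamma)$, and in the properly stratified setting $\uEnd(\Delta_\gamma)$ is only a (graded) local ring, not $\bk$. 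The clean way to run the Fitting argument is to observe that $\phi\in\End(T_\gamma)$ does preserve $\Delta_\gamma$ (since $\uHom(\Delta_\gamma,X)=0$, as $\Delta_\gamma$ is the projective cover of $L_\gamma$ in $\sC_{\le\gamma}$ and $X\in\sC_{<\gamma}$), and then to study the induced scalar in $\uHom(\Delta_\gamma,\bnabla_\gamma)\cong\bk$ rather than in $\uEnd(\Delta_\gamma)$.
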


\begin{lem}\label{lem:tilt-cotilt}
Assume that the tilting and cotilting objects in $\sC$ coincide, i.e., that for each $\gamma \in \Omega$, there is an integer $m_\gamma$ such that $T_\gamma \cong T'_\gamma\la m_\gamma\ra$.  Then:
\begin{enumerate}
\item If $\gamma \in \Omega$ is minimal, then $\Delta_\gamma \cong T_\gamma \cong T'_\gamma\la m_\gamma\ra \cong \nabla_\gamma\la m_\gamma\ra$.\label{it:tilt-min}
\item For any $\gamma \in \Omega$, we have $\uExt^1(\nabla_\gamma, \bnabla_\gamma) =0$.\label{it:nabla-bnabla}
\item For any $\gamma \in \Omega$, there are natural isomorphisms\label{it:delta-nabla-hom}
\[
\uHom(\Delta_\gamma,\Delta_\gamma) \cong
\uHom(\Delta_\gamma,\nabla_\gamma\la m_\gamma\ra) \cong
\uHom(\nabla_\gamma,\nabla_\gamma).
\]
\end{enumerate}
\end{lem}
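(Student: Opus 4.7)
The hypothesis $T_\gamma \cong T'_\gamma\la m_\gamma\ra$ has the unifying consequence that $T_\gamma$ simultaneously admits all four kinds of filtrations -- standard, costandard, proper standard, and proper costandard. The plan is to combine the short exact sequences from Proposition~\ref{prop:tilt-classif} with the basic $\uExt$-vanishing from Proposition~\ref{prop:recollement} and the (graded) socle/cosocle identification of $\Delta_\gamma$, $\bDelta_\gamma$, $\nabla_\gamma$, $\bnabla_\gamma$, which follows formally from Definition~\ref{defn:propstrat}.

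Part (1) is immediate: when $\gamma$ is minimal, $\sC_{<\gamma} = 0$, so the ``remainder'' objects $X \in \fDelta_{<\gamma}$ and $Y' \in \fnabla_{<\gamma}$ appearing in the sequences of Proposition~\ref{prop:tilt-classif} must both vanish. This collapses the sequences to $T_\gamma \cong \Delta_\gamma$ and $T'_\gamma \cong \nabla_\gamma$, and the chain of isomorphisms then follows from the hypothesis.

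For part (2), I apply $\uHom(-, \bnabla_\gamma)$ to the sequence $0 \to Y' \to T'_\gamma \to \nabla_\gamma \to 0$ with $Y' \in \fnabla_{<\gamma}$. On one end, $\uExt^1(T'_\gamma, \bnabla_\gamma) = 0$ because $T'_\gamma$ inherits a standard filtration from the identification $T'_\gamma\la m_\gamma\ra \cong T_\gamma$, reducing the question to instances of $\uExt^1(\Delta_\eta, \bnabla_\gamma) = 0$. On the other end, $\uHom(Y', \bnabla_\gamma) = 0$ by induction on the $\nabla$-filtration of $Y'$: any nonzero map $\nabla_\xi \to \bnabla_\gamma$ with $\xi < \gamma$ would have image containing the socle $L_\gamma$ of $\bnabla_\gamma$, contradicting the membership $\nabla_\xi \in \sC_{\le\xi} \subset \sC_{<\gamma}$. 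The long exact sequence then pinches $\uExt^1(\nabla_\gamma, \bnabla_\gamma)$ between two zero groups.

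For part (3), the plan is to transport $\uHom(\Delta_\gamma, -)$ through both exact sequences presenting $T_\gamma$. From $0 \to \Delta_\gamma \to T_\gamma \to X \to 0$, the cosocle of $\Delta_\gamma$ being $L_\gamma$ forces $\uHom(\Delta_\gamma, X) = 0$ for $X \in \fDelta_{<\gamma}$, giving $\uHom(\Delta_\gamma, \Delta_\gamma) \cong \uHom(\Delta_\gamma, T_\gamma)$. From the $\la m_\gamma\ra$-twist of $0 \to Y' \to T'_\gamma \to \nabla_\gamma \to 0$, the same cosocle vanishing together with $\uExt^1(\Delta_\gamma, \nabla_\eta) = 0$ (a devissage against the $\bnabla$-filtration of $\nabla_\eta$) produces $\uHom(\Delta_\gamma, T_\gamma) \cong \uHom(\Delta_\gamma, \nabla_\gamma\la m_\gamma\ra)$. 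A dual argument -- applying $\uHom(-, \nabla_\gamma)$ to the two sequences presenting $T'_\gamma$ and invoking the socle $L_\gamma$ of $\nabla_\gamma$ -- yields $\uHom(\nabla_\gamma, \nabla_\gamma) \cong \uHom(\Delta_\gamma, \nabla_\gamma\la m_\gamma\ra)$, closing the chain. The only obstacle is bookkeeping: each intermediate vanishing reduces either to the socle/cosocle identification of the (proper) standards/costandards or to the $\uExt$-vanishing recorded after Proposition~\ref{prop:recollement}.
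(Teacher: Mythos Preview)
Your proof is correct and follows essentially the same approach as the paper's. The paper is terser---for part~(3) it simply asserts that the maps $\uHom(\Delta_\gamma,\Delta_\gamma) \to \uHom(\Delta_\gamma,T_\gamma)$ and $\uHom(\Delta_\gamma,T'_\gamma\la m_\gamma\ra) \to \uHom(\Delta_\gamma, \nabla_\gamma\la m_\gamma\ra)$ are isomorphisms and that the other half is ``similar''---but the details you supply (the cosocle/socle vanishing and the d\'evissage to $\uExt^1(\Delta_\eta,\bnabla_\xi)=0$) are exactly what lies behind that assertion.
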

\begin{proof}
\eqref{it:tilt-min}~This is immediate from the short exact sequences in Proposition~\ref{prop:tilt-classif}.

\eqref{it:nabla-bnabla}~Consider the long exact sequence
\[
\cdots \to \uHom(Y',\bnabla_\gamma) \to \uExt^1(\nabla_\gamma, \bnabla_\gamma) \to \uExt^1(T'_\gamma, \bnabla_\gamma) \to \cdots
\]
The first term vanishes because $Y' \in \sC_{< \gamma}$, and the last term vanishes because $T'_\gamma \cong T_\gamma\la -m_\gamma\ra \in \fDelta$.  The result follows.

\eqref{it:delta-nabla-hom}~It is easy to see that the natural maps $\uHom(\Delta_\gamma,\Delta_\gamma) \to \uHom(\Delta_\gamma,T_\gamma)$ and $\uHom(\Delta_\gamma,T'_\gamma\la m_\gamma\ra) \to \uHom(\Delta_\gamma, \nabla_\gamma\la m_\gamma\ra)$ are both isomorphisms.  The proof of the second isomorphism in the statement is similar.
\end{proof}

\begin{prop}[{\cite{myron}; cf.~\cite[Proposition~1.5]{bbm}}]\label{prop:ringel-duality}
Assume that the tilting and cotilting objects in $\sC$ coincide.  Let $\cT \subset \sC$ be the full subcategory of tilting objects, and consider its homotopy category $\Kb\cT$.  The obvious functor
\begin{equation}\label{eqn:ringel}
\Kb\cT \to \Db\sC
\end{equation}
is fully faithful.  In case $\sC$ is quasi-hereditary, it is an equivalence.
\end{prop}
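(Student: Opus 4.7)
The plan is to establish the vanishing of higher Ext-groups between tilting objects, deduce full faithfulness from a standard homological criterion, and in the quasi-hereditary case verify essential surjectivity by induction on the stratification.

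First I would show that $\uExt^i_\sC(T, T') = 0$ for $i \geq 1$ whenever $T, T' \in \cT$. By hypothesis every tilting is simultaneously tilting and cotilting, so in particular $T \in \fDelta$ and $T' \in \fbnabla$. A double induction on the lengths of a standard filtration of $T$ and a proper costandard filtration of $T'$, using the usual long exact sequences in Ext, reduces the problem to the vanishing $\uExt^i(\Delta_\gamma, \bnabla_\xi) = 0$ for $i \geq 1$, which was recorded just after Proposition~\ref{prop:recollement} as a direct consequence of recollement.

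Full faithfulness of the functor in~\eqref{eqn:ringel} is then a general homological principle: for bounded complexes $T^\bullet, T'^\bullet$ of tiltings, the hypercohomology spectral sequence with $E_1$-terms built from $\uExt^\bullet_\sC(T^i, T'^j)$ and abutting to $\Hom_{\Db\sC}(T^\bullet, T'^\bullet[\bullet])$ collapses to the row of Hom-groups, whose total complex computes $\Hom_{\Kb\cT}(T^\bullet, T'^\bullet[\bullet])$. This upgrades the object-wise Ext-vanishing to the desired isomorphism on morphism spaces.

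For essential surjectivity in the quasi-hereditary case (where $\Delta_\gamma = \bDelta_\gamma$ and $\nabla_\gamma = \bnabla_\gamma$), let $\cU \subset \Db\sC$ denote the essential image of $\Kb\cT$; full faithfulness together with the Krull--Schmidt property of $\cT$ makes $\cU$ a strictly full triangulated subcategory closed under direct summands. I would show by induction on the finite order ideal $\{\xi \leq \gamma\}$ that $\Delta_\gamma \in \cU$: the base case is Lemma~\ref{lem:tilt-cotilt}\eqref{it:tilt-min}, while the inductive step uses the distinguished triangle coming from the short exact sequence $0 \to \Delta_\gamma \to T_\gamma \to X \to 0$ of Proposition~\ref{prop:tilt-classif}, with $X \in \fDelta_{<\gamma}$ already in $\cU$ by induction. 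Quasi-heredity ensures the category has enough projectives, that projectives admit standard filtrations, and that global dimension is finite; consequently every object of $\sC$ is resolved by a finite complex of projectives and hence lies in $\cU$, forcing $\cU = \Db\sC$. The main obstacle is the bookkeeping required to identify the essential image of $\Kb\cT$ with the triangulated envelope of $\cT$ inside $\Db\sC$, which is precisely where the fully-faithful half of the argument is indispensable.
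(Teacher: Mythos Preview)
The paper does not supply its own proof of this proposition: it is stated with attribution to~\cite{myron} and~\cite[Proposition~1.5]{bbm}, and the text moves on immediately to Proposition~\ref{prop:tiltdim}. So there is no in-paper argument to compare against; your proposal is being judged on its own terms against the standard argument (essentially that of~\cite{bbm}).

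Your argument is correct and is exactly the standard one. The Ext-vanishing between tiltings reduces, as you say, to $\uExt^k(\Delta_\gamma,\bnabla_\xi)=0$ for $k>0$, which the paper records right after Proposition~\ref{prop:recollement}; the spectral-sequence (or double-complex) deduction of full faithfulness from that vanishing is routine; and the inductive argument placing each $\Delta_\gamma$ in the essential image via Proposition~\ref{prop:tilt-classif} is the expected one.

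One small point of self-containment: your final step in the quasi-hereditary case invokes ``enough projectives, projectives have standard filtrations, finite global dimension.'' These are true and well known for highest-weight categories, but they are not proved in the paper's abstract framework, so as written you are importing outside facts. A cleaner route that stays entirely within what the paper provides: once every $\Delta_\gamma$ lies in the triangulated, summand-closed essential image $\cU$, argue by induction on $\gamma$ that every simple $L_\gamma$ lies in $\cU$, using the short exact sequence $0\to\ker\phi_\gamma\to\Delta_\gamma\to L_\gamma\to 0$ with $\ker\phi_\gamma\in\sC_{<\gamma}$ (Definition~\ref{defn:propstrat}\eqref{it:propstd}); since every object of $\sC$ has finite length, this forces $\cU=\Db\sC$. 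This avoids any appeal to projectives or global dimension.
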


\begin{prop}\label{prop:tiltdim}
Assume that the tilting and cotilting objects in $\sC$ coincide.  The following conditions are equivalent:
\begin{enumerate}
\item $X \in \fDelta$.
\item There is an exact sequence $0 \to X \to T^0 \to T^1 \to \cdots \to T^k \to 0$
where all the $T^i$ are tilting.\label{it:tilt-resoln}
\end{enumerate}
\end{prop}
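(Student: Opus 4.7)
The plan is to prove the two directions separately, leaning on the $\Ext$-vanishings provided by Proposition~\ref{prop:recollement} and the structure of tilting objects from Proposition~\ref{prop:tilt-classif}.

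For $(\ref{it:tilt-resoln})\Rightarrow(1)$: I would split the long exact sequence into short exact sequences $0\to Z^i\to T^i\to Z^{i+1}\to 0$ with $Z^0=X$ and $Z^k=T^k$, and argue by descending induction on $i$ that each $Z^i$ lies in $\fDelta$. The whole argument reduces to the following closure property: if $0\to A\to B\to C\to 0$ is short exact with $B,C\in\fDelta$, then $A\in\fDelta$. To prove this, devissage through the standard filtrations of $B$ and $C$, combined with the vanishing $\uExt^{k}(\Delta_\mu,\bnabla_\xi)=0$ for $k>0$ coming from Proposition~\ref{prop:recollement}, gives $\uExt^1(B,\bnabla_\xi)=\uExt^2(C,\bnabla_\xi)=0$ for every $\xi\in\Omega$; the long exact sequence then forces $\uExt^1(A,\bnabla_\xi)=0$. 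The conclusion $A\in\fDelta$ follows from the standard $\uExt^1$-characterization
\[
\fDelta=\{X\in\sC : \uExt^1(X,\bnabla_\xi)=0 \text{ for all }\xi\in\Omega\},
\]
a result from the properly stratified algebra literature (see, e.g., \cite{dlab,myron}).

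For $(1)\Rightarrow(\ref{it:tilt-resoln})$: I would induct on the length of a standard filtration of $X$. The base case is $X=\Delta_\gamma$, which I would handle by a secondary induction on $\gamma$ in $\Omega$; this is well-founded because each set $\{\xi\leq\gamma\}$ is finite. The short exact sequence $0\to\Delta_\gamma\to T_\gamma\to Q_\gamma\to 0$ from Proposition~\ref{prop:tilt-classif} has $Q_\gamma\in\fDelta_{<\gamma}$, and by the secondary induction $Q_\gamma$ admits a finite tilting coresolution, which I splice with $T_\gamma$ to produce one for $\Delta_\gamma$. For the inductive step on filtration length, write $0\to Y\to X\to\Delta_\gamma\to 0$ with $Y\in\fDelta$ of strictly smaller length; by the inductive hypothesis both $Y$ and $\Delta_\gamma$ admit finite tilting coresolutions, and the termwise direct sum construction (a horseshoe-style argument) splices them into a finite tilting coresolution of $X$. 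The splicing is valid because $\uExt^1(\Delta_\gamma,T)=0$ for every tilting $T$: indeed $T\in\fbnabla$, so this reduces to the $\uExt^1$-vanishing of standard against proper-costandard-filtered objects.

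The principal obstacle is the $\uExt^1$-characterization of $\fDelta$ invoked in $(\ref{it:tilt-resoln})\Rightarrow(1)$. Without it, one must construct a standard filtration of $A$ directly out of the vanishing hypothesis, typically by an inductive argument that builds up the filtration by extracting maximal-weight quotients one at a time. Everything else in the proof is a routine manipulation of the existing properly stratified structure.
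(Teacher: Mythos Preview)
Your argument is essentially correct and runs parallel to the paper's, but with one genuine methodological difference in the direction $(1)\Rightarrow(\ref{it:tilt-resoln})$.

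For $(\ref{it:tilt-resoln})\Rightarrow(1)$, both you and the paper reduce to the kernel--closure property of $\fDelta$ (if $0\to A\to B\to C\to 0$ is exact with $B,C\in\fDelta$, then $A\in\fDelta$). The paper leaves this entirely implicit, simply asserting that induction on tilting dimension gives the result; your appeal to the $\uExt^1$-characterization of $\fDelta$ makes the mechanism explicit. So here the two proofs coincide.

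For $(1)\Rightarrow(\ref{it:tilt-resoln})$, the paper takes a different route to the extension--closure of the class of objects admitting a tilting coresolution. Rather than a horseshoe, it identifies this class with the essential image of a full subcategory $K^0\subset\Kb\cT$ under the fully faithful functor $\Kb\cT\to\Db\sC$ of Proposition~\ref{prop:ringel-duality}; extension--closure of $K^0$ is then automatic. Your horseshoe argument is more elementary and avoids Proposition~\ref{prop:ringel-duality}, at the cost of a small lifting verification at each stage. On that point, your justification is incomplete: you check $\uExt^1(\Delta_\gamma,T)=0$, which handles only the zeroth step. At the $i$-th step you need $\uExt^1(Z^i_\Delta,S^i)=0$, where $Z^i_\Delta$ is the $i$-th syzygy in the chosen coresolution of $\Delta_\gamma$. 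This is fine --- $Z^i_\Delta$ admits the truncated tilting coresolution $0\to Z^i_\Delta\to T^i\to\cdots\to T^n\to 0$, so $Z^i_\Delta\in\fDelta$ by the direction you have already proved, and then $\uExt^1(Z^i_\Delta,S^i)=0$ since $S^i\in\fbnabla$ --- but you should say so. After this fix, both approaches reduce to the same endgame: showing each $\Delta_\gamma$ has a tilting coresolution by induction on $\gamma$ via the first sequence in Proposition~\ref{prop:tilt-classif}.
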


Before proving this, we record one immediate consequence.

\begin{defn}\label{defn:tdim}
For $X \in \fDelta$, we define the \emph{tilting dimension} of $X$, denoted $\tdim X$, to be the smallest integer $k$ such that there exists a resolution of $X$ of length $k$ by tilting objects, as in Proposition~\ref{prop:tiltdim}.
\end{defn}

\begin{cor}\label{cor:tdim-induction}
If $X \in \fDelta$, there is a short exact sequence
\begin{equation}\label{eqn:tdim-delta-ses}
0 \to X \to T \to X' \to 0
\end{equation}
where $T$ is tilting, $X' \in \fDelta$, and $\tdim X' = \tdim X - 1$.
\end{cor}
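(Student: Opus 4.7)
The plan is to deduce this corollary directly from Proposition~\ref{prop:tiltdim} by a standard dimension-shifting / truncation argument on a minimal tilting resolution of $X$. Let $k = \tdim X$, and invoke Proposition~\ref{prop:tiltdim} to produce a tilting resolution
\[
0 \to X \to T^0 \to T^1 \to \cdots \to T^k \to 0
\]
of length $k$. The construction of the short exact sequence in \eqref{eqn:tdim-delta-ses} is then forced: set $T = T^0$ and let $X'$ be the image of $T^0 \to T^1$, equivalently the kernel of $T^1 \to T^2$ (when $k \ge 1$; when $k = 0$, one takes $T = X$ and $X' = 0$). This immediately gives
\[
0 \to X \to T \to X' \to 0,
\]
while the remainder of the resolution becomes a tilting resolution
\[
0 \to X' \to T^1 \to T^2 \to \cdots \to T^k \to 0
\]
of $X'$ of length $k-1$. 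Applying the implication \descref{it:tilt-resoln}$\Rightarrow$\descref{it:tilt-resoln} direction of Proposition~\ref{prop:tiltdim} (i.e.\ the statement that any object admitting a finite tilting coresolution lies in $\fDelta$) yields $X' \in \fDelta$ with $\tdim X' \le k - 1$.

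The only remaining point is the lower bound $\tdim X' \ge k - 1$, which is where minimality enters. If $X'$ admitted a shorter tilting resolution
\[
0 \to X' \to S^0 \to S^1 \to \cdots \to S^{k-2} \to 0,
\]
one could splice it with $0 \to X \to T \to X' \to 0$ along the common object $X'$ to obtain a tilting resolution
\[
0 \to X \to T \to S^0 \to S^1 \to \cdots \to S^{k-2} \to 0
\]
of $X$ of length $k-1$, contradicting $\tdim X = k$. Hence $\tdim X' = k - 1$, as required.

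I do not anticipate a substantive obstacle: the argument is the usual syzygy trick translated from projective resolutions into tilting resolutions, and the only place where properly stratified structure is used is already packaged inside Proposition~\ref{prop:tiltdim}. The most delicate point, such as it is, is checking that the spliced sequence is exact at the boundary term $S^0$; this is immediate since $T \twoheadrightarrow X' \hookrightarrow S^0$ ensures that the image of $T \to S^0$ equals $X'$, which is also the kernel of $S^0 \to S^1$.
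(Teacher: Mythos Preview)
Your argument is correct and is precisely the ``immediate'' derivation the paper has in mind: the corollary is recorded without proof as a direct consequence of Proposition~\ref{prop:tiltdim}, and your truncation-plus-splicing argument is the standard way to unpack that. One trivial slip: you wrote the implication as \eqref{it:tilt-resoln}$\Rightarrow$\eqref{it:tilt-resoln}, but you mean \eqref{it:tilt-resoln}$\Rightarrow$(1), as your parenthetical makes clear.
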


\begin{proof}[Proof of Proposition~\ref{prop:tiltdim}]
Let $\fDelta'$ be the class of objects $X$ satisfying condition~\eqref{it:tilt-resoln} above. The notion of \emph{tilting dimension} makes sense for objects of $\fDelta'$. Moreover, if we replace every occurrence of $\fDelta$ by $\fDelta'$ in the statement of Corollary~\ref{cor:tdim-induction}, then the resulting statement is true.  An argument by induction on tilting dimension, using the short exact sequence~\eqref{eqn:tdim-delta-ses}, shows that $\fDelta' \subset \fDelta$.

Next, let $K^0 \subset \Kb\cT$ be the full subcategory consisting of objects isomorphic to a bounded complex of tilting modules $(X^\bullet, d)$ satisfying the following two conditions:
\begin{enumerate}
\item The complex is concentrated in nonnegative degrees.
\item The cohomology of the complex vanishes, except possibly in degree $0$.
\end{enumerate}
It is easy to see that $\fDelta'$ consists precisely of the objects that lie in the image of $K^0$ under the functor~\eqref{eqn:ringel}. In particular, we see that $\fDelta'$ is stable under extensions, because $K^0$ is.  Thus, to prove that $\fDelta \subset \fDelta'$, it suffices to show that each $\Delta_\gamma$ lies in $\fDelta'$.  This follows from the first short exact sequence in Proposition~\ref{prop:tilt-classif}, by induction on $\gamma$.
\end{proof}

The next lemma is ultimately the source of the torsion-freeness in Theorem~\ref{thm:mv-intro}.

\begin{lem}\label{lem:delta-nabla-free}
Assume that the tilting and cotilting objects in $\sC$ coincide.  If $X \in \fDelta$, then $\uHom(X,\nabla_\gamma)$ is a free module over the graded ring $\uEnd(\nabla_\gamma)$.
\end{lem}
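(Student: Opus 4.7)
The plan is to induct on the length of a standard filtration of $X$. Choose a subobject $X' \subset X$ with $X' \in \fDelta$ and $X/X' \cong \Delta_\xi\la n\ra$ for some $\xi \in \Omega$, $n \in \Z$. Applying $\uHom(-, \nabla_\gamma)$ produces a long exact sequence, which collapses to the short exact sequence
\[
0 \to \uHom(\Delta_\xi\la n\ra, \nabla_\gamma) \to \uHom(X, \nabla_\gamma) \to \uHom(X', \nabla_\gamma) \to 0,
\]
thanks to the vanishing $\uExt^1(\Delta_\xi, \nabla_\gamma) = 0$. This Ext-vanishing is a dévissage consequence of $\uExt^1(\Delta_\xi, \bnabla_\zeta) = 0$ (noted immediately after Proposition~\ref{prop:recollement}) together with the fact that $\nabla_\gamma$ has a filtration by proper costandards (Definition~\ref{defn:propstrat}\descref{it:costandard}). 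By induction, $\uHom(X', \nabla_\gamma)$ is a free $\uEnd(\nabla_\gamma)$-module, hence projective, so this sequence splits. Provided the leftmost term is also free, the middle term will be a direct sum of free modules and therefore free.

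The bulk of the argument is thus to compute $\uHom(\Delta_\xi, \nabla_\gamma)$. I would first show this vanishes unless $\xi = \gamma$. A nonzero map $f \colon \Delta_\xi \to \nabla_\gamma\la k\ra$ would have image $N$ whose top is $L_\xi$ (because $\Delta_\xi$ is the projective cover of $L_\xi$ in $\sC_{\le \xi}$) and whose socle is $L_\gamma\la k\ra$ (because $\nabla_\gamma$ is the injective envelope of $L_\gamma$ in $\sC_{\le\gamma}$, and $L_\gamma$ remains essential in the ambient category $\sC$ since $\sC_{\le\gamma}$ is closed under subobjects). Since the composition factors of $\Delta_\xi$ are of the form $L_\zeta\la m\ra$ with $\zeta \le \xi$ and those of $\nabla_\gamma\la k\ra$ have $\zeta \le \gamma$, the presence of $L_\xi$ and $L_\gamma\la k\ra$ as composition factors of $N$ forces $\xi \le \gamma$ and $\gamma \le \xi$ simultaneously, hence $\xi = \gamma$.

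In the remaining case $\xi = \gamma$, I need $\uHom(\Delta_\gamma, \nabla_\gamma)$ to be a \emph{free} $\uEnd(\nabla_\gamma)$-module of rank one. Lemma~\ref{lem:tilt-cotilt}\descref{it:delta-nabla-hom} already supplies an isomorphism $\uEnd(\nabla_\gamma) \simto \uHom(\Delta_\gamma, \nabla_\gamma\la m_\gamma\ra)$ of graded vector spaces; unwinding its proof, this map is postcomposition with the morphism $\iota \colon \Delta_\gamma \to \nabla_\gamma\la m_\gamma\ra$ obtained from the composite $\Delta_\gamma \to T_\gamma \cong T'_\gamma\la m_\gamma\ra \to \nabla_\gamma\la m_\gamma\ra$ coming from the short exact sequences of Proposition~\ref{prop:tilt-classif}. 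Such postcomposition is manifestly equivariant for the left action of $\uEnd(\nabla_\gamma)$ by postcomposition, so $\iota$ is a free rank-one generator.

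I expect the main obstacle to be this last point: promoting Lemma~\ref{lem:tilt-cotilt}\descref{it:delta-nabla-hom}'s vector-space isomorphism to an isomorphism of graded $\uEnd(\nabla_\gamma)$-modules requires one to track through the construction of $\iota$ and confirm that the relevant maps are given by composition with a fixed morphism on the appropriate side. Once that is settled, the induction runs cleanly and yields the result.
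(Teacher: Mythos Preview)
Your argument is correct and follows the same strategy as the paper: induct on the length of a standard filtration, reduce to $X = \Delta_\xi\la n\ra$, observe that $\uHom(\Delta_\xi,\nabla_\gamma)=0$ for $\xi\ne\gamma$, and invoke Lemma~\ref{lem:tilt-cotilt}\eqref{it:delta-nabla-hom} for $\xi=\gamma$. The paper is terser---it simply says the middle term of the short exact sequence is free when the outer ones are, and cites Lemma~\ref{lem:tilt-cotilt}\eqref{it:delta-nabla-hom} without further comment---whereas you spell out why that lemma actually yields a module isomorphism (the map is composition with a fixed morphism $\iota$, hence $\uEnd(\nabla_\gamma)$-linear); this extra care is warranted and fills a small gap the paper leaves implicit.
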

\begin{proof}
We proceed by induction on the number of steps in a standard filtration of $X$.  If $0 \to X' \to X \to X'' \to 0$ is an exact sequence with $X', X'' \in \fDelta$, then we obtain a short exact sequence
\[
0 \to \uHom(X'',\nabla_\gamma) \to \uHom(X,\nabla_\gamma) \to \uHom(X',\nabla_\gamma) \to 0
\]
of $\uEnd(\nabla_\gamma)$-modules.  If the first and last terms are free, the middle term must be as well.  Thus, we are reduced to considering the case where $X$ is a standard object, say $X = \Delta_\xi\la n\ra$.  If $\xi \ne \gamma$, then $\uHom(X, \nabla_\gamma) = 0$.  If $\xi = \gamma$, then $\uHom(X, \nabla_\gamma)$ is a free $\uEnd(\nabla_\gamma)$-module by Lemma~\ref{lem:tilt-cotilt}\eqref{it:delta-nabla-hom}.
\end{proof}

\subsection{Quotients of the category of tilting objects}
\label{subsect:tilting-quot}

The next result compares the Serre quotient $\sC/\sC_\Gamma$ to a ``naive'' quotient category.  If $\sA$ is an additive category and $\sB \subset \sA$ is a full subcategory, we write $\sA \aq \sB$ for the category with the same objects as $\sA$, but with morphisms given by
\begin{equation}\label{eqn:naive-quot}
\Hom_{\sA \aq \sB}(X,Y) = \Hom_\sA(X,Y) / \{f \mid \text{$f$ factors through an object of $\sB$} \}.
\end{equation}

\begin{prop}\label{prop:tilting-quot}
Assume that the tilting and cotilting objects in $\sC$ coincide, and let $\Gamma \subset \Omega$ be a finite order ideal.  The quotient functor $\Pi: \sC \to \sC/\sC_\Gamma$ induces an equivalence of categories
\begin{equation}\label{eqn:tilting-quot}
\bar\Pi: \Tilt(\sC) \aq \Tilt(\sC_\Gamma) \simto \Tilt(\sC / \sC_\Gamma).
\end{equation}
\end{prop}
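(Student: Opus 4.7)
The plan is to exploit the derived recollement of Proposition~\ref{prop:recollement} together with the tilting resolutions from Corollary~\ref{cor:tdim-induction}. Well-definedness is immediate: by Lemma~\ref{lem:preserve-Tilt}, $\Pi$ sends tilting objects to tilting objects, and it annihilates $\sC_\Gamma \supset \Tilt(\sC_\Gamma)$; hence $\bar\Pi$ descends to the quotient.

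For full faithfulness, fix $T_1, T_2 \in \Tilt(\sC)$ and use the adjunction $\Hom_{\sC/\sC_\Gamma}(\Pi T_1, \Pi T_2) \cong \Hom_{\Db\sC}(T_1, \pir\Pi T_2)$. Applying $\Hom_{\Db\sC}(T_1, -)$ to the recollement triangle
\[
\imath\ir T_2 \to T_2 \to \pir\Pi T_2 \to \imath\ir T_2[1],
\]
surjectivity on Hom reduces to $\Ext^1_\sC(T_1, \imath\ir T_2) = 0$. By the adjunction $\il \dashv \imath$ this equals $\Ext^1_{\sC_\Gamma}(\il T_1, \ir T_2)$, which vanishes because $\il T_1 \in \fDelta$ and $\ir T_2 \in \fbnabla$ (Lemma~\ref{lem:preserve-Filt}) and $\Ext^{>0}$ between such objects vanishes. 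To identify the kernel, any $f : T_1 \to T_2$ with $\Pi f = 0$ lifts through the triangle to a morphism $T_1 \to \imath\ir T_2$, corresponding under $\il \dashv \imath$ to some $\hat f : \il T_1 \to \ir T_2$ in $\sC_\Gamma$. Applying Corollary~\ref{cor:tdim-induction} to $\il T_1$ produces a short exact sequence $0 \to \il T_1 \to T^0 \to X \to 0$ with $T^0 \in \Tilt(\sC_\Gamma)$ and $X \in \fDelta$. The same Ext-vanishing applied to $\Hom_{\sC_\Gamma}(-, \ir T_2)$ forces $\hat f$ to factor through $T^0$; unwinding the adjunction then yields a factorization $f : T_1 \to \imath T^0 \to T_2$, as required.

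Essential surjectivity follows from full faithfulness. For $\gamma \in \Omega \setminus \Gamma$, applying the exact functor $\Pi$ to the short exact sequences of Proposition~\ref{prop:tilt-classif} exhibits $\Pi T_\gamma$ as a nonzero tilting object of $\sC/\sC_\Gamma$ whose composition factors lie among $\{L_{\bar\xi} : \xi \le \gamma,\ \xi \notin \Gamma\}$ and include $L_{\bar\gamma}$. Full faithfulness realizes $\End_{\sC/\sC_\Gamma}(\Pi T_\gamma)$ as a quotient of the local ring $\End_\sC(T_\gamma)$, so it is itself local, and $\Pi T_\gamma$ is indecomposable. Matching with the classification of indecomposable tiltings in $\sC/\sC_\Gamma$ (Proposition~\ref{prop:tilt-classif} applied in the quotient) forces $\Pi T_\gamma \cong T_{\bar\gamma}\la n_\gamma\ra$ for some $n_\gamma \in \Z$. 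Any tilting of $\sC/\sC_\Gamma$ is a Krull--Schmidt sum of shifts of such objects, hence lies in the essential image of $\bar\Pi$. The hardest step is the kernel identification: turning the abstract lift through $\imath\ir T_2$ into a concrete factorization through a tilting of $\sC_\Gamma$. This is precisely where Corollary~\ref{cor:tdim-induction} is indispensable, bridging the recollement and the tilting structure.
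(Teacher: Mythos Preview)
Your argument is correct, but it is organized differently from the paper's. The paper proves full faithfulness by induction on $|\Gamma|$: when $\Gamma=\{\gamma\}$ is a singleton (so $\gamma$ is minimal), Lemma~\ref{lem:tilt-cotilt}\eqref{it:tilt-min} says $\il T$ and $\ir T'$ are already tilting in $\sC_\Gamma$, which immediately identifies the kernel of $\Pi$ on $\Hom$ with maps factoring through $\Tilt(\sC_\Gamma)$; the general case is then reduced to the singleton case via iterated quotients. You instead handle an arbitrary finite order ideal $\Gamma$ in one shot, replacing the minimality trick by the tilting resolution of Corollary~\ref{cor:tdim-induction} applied to $\il T_1$ inside $\sC_\Gamma$. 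This is a genuine trade: the paper's route needs only Lemma~\ref{lem:tilt-cotilt}\eqref{it:tilt-min} and an elementary induction, while yours invokes the stronger Proposition~\ref{prop:tiltdim}/Corollary~\ref{cor:tdim-induction} but avoids the inductive bookkeeping. (Implicitly you are using that $\sC_\Gamma$ is itself properly stratified with tilting${}={}$cotilting, so that Corollary~\ref{cor:tdim-induction} applies there; this is clear from the definitions but worth stating.) Your essential-surjectivity argument, showing directly that $\Pi T_\gamma$ has local endomorphism ring and hence is indecomposable, is a slightly more explicit version of the paper's observation that full faithfulness plus Proposition~\ref{prop:tilt-classif} forces essential surjectivity.
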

\begin{proof}
Let $Q: \Tilt(\sC) \to \Tilt(\sC) \aq \Tilt(\sC_\Gamma)$ be the quotient functor.  It is clear that $\Pi(\Tilt(\sC_\Gamma)) = 0$, so there is a unique functor $\bar\Pi$ such that $\bar\Pi \circ Q \cong \Pi$.  From the classification of tilting objects in Proposition~\ref{prop:tilt-classif}, it is clear that every indecomposable object in $\Tilt(\sC / \sC_\Gamma)$ occurs as a direct summand of some object in the image of $\bar\Pi$.  If $\bar\Pi$ were already known to be fully faithful, it would send indecomposable objects to indecomposable objects, and would therefore be essentially surjective.

It suffices, then, to prove that $\bar\Pi$ is fully faithful.  We proceed by induction on the size of $\Gamma$.  Suppose first that $\Gamma$ is a singleton.  Let $T, T' \in \Tilt(\sC)$, and consider the diagram
\begin{equation}\label{eqn:tilting-quot-comp}
\vcenter{\hbox{\begin{tikzpicture}
\node (a) at (0,0) {$\uHom_{\sC}(T,T')$};
\node (b) at (3.75,0) {$\uHom_{\Tilt(\sC) \aq \Tilt(\sC_\Gamma)}(T,T')$};
\node (c) at (8.25,0) {$\uHom_{\sC/\sC_\Gamma}(\Pi(T),\Pi(T'))$};
\draw[->>, >=angle 90, below] (a) -- node {$Q$} (b);
\draw[->, >=angle 90, below] (b) to node {$\bar\Pi$} (c);
\draw[->, >=angle 90,bend left=12, above] (a) to node {$\Pi$} (c);
\end{tikzpicture}}}
\end{equation}
By Lemma~\ref{lem:preserve-Filt}, all three terms of the functorial distinguished triangle $\pil\Pi(T) \to T \to \imath\il(T) \to$ lie in $\sC$, so that distinguished triangle is actually a short exact sequence.  Apply $\uHom({-},T')$ to get the long exact sequence
\begin{multline}\label{eqn:tilting-quot-les}
0 \to \uHom(\il(T),\ir(T')) \to \uHom(T,T') \to \uHom(\Pi(T), \Pi(T')) \to \\
\uExt^1(\il(T),\ir(T'))\to\ldots
\end{multline}
The last term vanishes because (by Lemma~\ref{lem:preserve-Filt} again) $\il(T)$ has a standard filtration and $\ir(T')$ has a costandard filtration.  It follows that the map labelled $\Pi$ in~\eqref{eqn:tilting-quot-comp} is surjective, and its kernel can be identified with the space
\[
K = \{ f: T \to T' \mid \text{$f$ factors as $T \to  \imath\il(T) \to \imath\ir(T') \to T'$} \}.
\]
We deduce that $\bar\Pi$ is surjective as well.  Now, the kernel of $Q$ in~\eqref{eqn:tilting-quot-comp} is the space
\[
K' = \{f: T \to T' \mid \text{$f$ factors through an object of $\Tilt(\sC_\Gamma)$} \}.
\]
We already know that $K' \subset K$.  But since $\Gamma$ is a singleton $\{\gamma\}$ with $\gamma$ necessarily minimal in $\Omega$, we see from Lemma~\ref{lem:tilt-cotilt}\eqref{it:tilt-min} that $\il(T)$ is actually tilting (and not merely in $\fDelta$), and likewise for $\ir(T')$.  So $K = K'$, and we conclude that $\bar\Pi$ in~\eqref{eqn:tilting-quot-comp} is an isomorphism.

For the general case, choose a nonempty proper ideal $\Upsilon \subset \Gamma$.  Then $\Upsilon$ and $\Gamma \setminus \Upsilon$ are both smaller than $\Gamma$, and by induction, we have natural equivalences
\begin{align*}
\Tilt(\sC)\aq \Tilt(\sC_\Upsilon) &\cong \Tilt(\sC/\sC_\Upsilon), \\
\Tilt(\sC_\Gamma)\aq \Tilt(\sC_\Upsilon) &\cong \Tilt(\sC_\Gamma/\sC_\Upsilon), \\
\Tilt(\sC/\sC_\Upsilon) \aq \Tilt(\sC_\Gamma/\sC_\Upsilon) &\cong \Tilt((\sC/\sC_\Upsilon)/(\sC_\Gamma/\sC_\Upsilon)) \cong \Tilt(\sC/\sC_\Gamma).
\end{align*}
It is also easy to see that there is a canonical equivalence
\[
\Tilt(\sC)\aq \Tilt(\sC_\Gamma) \cong (\Tilt(\sC)\aq \Tilt(\sC_\Upsilon)) \bigaq (\Tilt(\sC_\Gamma)\aq \Tilt(\sC_\Upsilon)).
\]
Combining all these yields the desired equivalence~\eqref{eqn:tilting-quot}.
\end{proof}

The next corollary is immediate from~\eqref{eqn:tilting-quot-les} and the discussion following it.

\begin{cor}\label{cor:std-costd-surj}
Assume that the tilting and cotilting objects in $\sC$ coincide, and let $\Gamma \subset \Omega$ be a finite order ideal.  If $X \in \fDelta$ and $Y \in \fnabla$, then the map $\uHom_{\sC}(X,Y) \to \uHom_{\sC/\sC_\Gamma}(\Pi(X), \Pi(Y))$ is surjective.
\end{cor}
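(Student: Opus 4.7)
The plan is to adapt the long exact sequence~\eqref{eqn:tilting-quot-les} that drove the proof of Proposition~\ref{prop:tilting-quot}, now using $X \in \fDelta$ and $Y \in \fnabla$ in place of the tilting objects $T, T'$. Surjectivity of $\uHom_{\sC}(X,Y) \to \uHom_{\sC/\sC_\Gamma}(\Pi(X), \Pi(Y))$ will then be visible as the middle arrow in a three-term exact sequence whose final term vanishes.

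First I would produce the short exact sequence
\[
0 \to \pil\Pi(X) \to X \to \imath\il(X) \to 0
\]
in $\sC$, arising from the functorial distinguished triangle attached to the recollement in Proposition~\ref{prop:recollement}. That every term lies in the heart follows from Lemma~\ref{lem:preserve-Tilt} (which gives $\Pi(X) \in \fDelta(\sC/\sC_\Gamma)$) together with Lemma~\ref{lem:preserve-Filt} (which then places $\pil\Pi(X) \in \fDelta(\sC)$ and $\il(X) \in \fDelta(\sC_\Gamma)$), combined with the $t$-exactness of $\imath$. Applying $\uHom({-},Y)$ to this short exact sequence yields
\[
\uHom_{\sC}(X,Y) \to \uHom_{\sC}(\pil\Pi(X), Y) \to \uExt^1_{\sC}(\imath\il(X), Y).
\]
The $(\pil, \Pi)$-adjunction identifies the middle term with $\uHom_{\sC/\sC_\Gamma}(\Pi(X), \Pi(Y))$, and via this identification the first arrow is precisely the map of the statement. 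The $(\imath, \ir)$-adjunction rewrites the last term as $\uExt^1_{\sC_\Gamma}(\il(X), \ir(Y))$, and by Lemma~\ref{lem:preserve-Filt} again, $\il(X) \in \fDelta(\sC_\Gamma)$ and $\ir(Y) \in \fnabla(\sC_\Gamma)$.

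The only remaining step, and the one mild obstacle, is to show that $\uExt^1(\fDelta, \fnabla) = 0$ in the properly stratified category $\sC_\Gamma$. This is not literally one of the axioms in Definition~\ref{defn:propstrat}, but it follows routinely: the consequence of Proposition~\ref{prop:recollement} recorded in the text gives $\uExt^k(\Delta_\gamma, \bnabla_\xi) = 0$ for all $k > 0$, while Definition~\ref{defn:propstrat}\descref{it:costandard} equips each $\nabla_\xi$ with a filtration whose subquotients are of the form $\bnabla_\xi\la n\ra$. A straightforward dévissage along both filtrations then delivers the required vanishing, completing the proof.
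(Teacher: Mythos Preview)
Your proposal is correct and follows essentially the same approach as the paper: the paper simply remarks that the corollary is immediate from the long exact sequence~\eqref{eqn:tilting-quot-les} and the discussion following it, which is precisely the argument you have written out in full (with $X,Y$ playing the roles of $T,T'$), including the d\'evissage for the $\uExt^1$-vanishing that you spell out explicitly.
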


\subsection{Perverse-coherent sheaves on the nilpotent cone}
\label{subsect:pervcoh}

In this subsection, we assume that $\bk$ is an algebraically closed field whose characteristic is good for $\Gv$, and that the derived group of $\Gv$ is simply connected.  Recall that $\cN$ denotes the nilpotent cone of $\Gv$.  Let $\Gv \times \Gm$ act on $\cN$ by $(g,z)\cdot x = z^{-2}\Ad(g)(x)$.  We write $\CohGmN$, or simply $\CohN$, for the category of $(\Gv \times \Gm)$-equivariant coherent sheaves on $\cN$.

Let $\PCohN$ denote the category of $(\Gv \times \Gm)$-equivariant \emph{perverse-coherent sheaves} on $\cN$.  This is the heart of a certain remarkable $t$-structure on $\Db\CohN$.  We refer the reader to~\cite{ab:pcs,bez:qes,a} for details on the definition and properties of this category.  Here are some basic facts about $\PCohN$:
\begin{itemize}
\item Every object in $\PCohN$ has finite length.
\item It is stable under $\cF \mapsto \cF\la 1\ra$, where $\la 1\ra: \Db\CohN \to \Db\CohN$ is given by a twist of the $\Gm$-action.
\item The set $\Irr(\PCohN)/\Z$ is naturally in bijection with $\bXp$.
\end{itemize}

\begin{rmk}For any $V\in\Rep(\Gv)$, the coherent sheaf $V\otimes \cON$ is perverse-coherent. The proof of \cite[Lemma 5.4]{a} can be generalized to work for any $V\in\Rep(\Gv)$. Alternatively, one can give a more direct argument using the definition of the perverse-coherent $t$-structure from \cite{bez:qes} and the fact that $\cN$ is Cohen--Macaulay.
\end{rmk}

For $\lambda \in \bXp$, let $\delta_\lambda$ be the length of the shortest Weyl group element $w$ such that $w\lambda$ is antidominant.  We define a subcategory $\PCohN_{\le \lambda} \subset \PCohN$ as in~\eqref{eqn:defn-serre-le}.  (The theorem below implies that this agrees with the definition of $\PCohN_{\le \lambda}$ given in~\S\ref{subsect:proof}.)  For our purposes, the most important fact about $\PCohN$ is the following result of Minn-Thu-Aye, which refines the description given in~\cite{a,bez:qes}.

\begin{thm}[Minn-Thu-Aye~{\cite{myron}}]\label{thm:myron}
Assume that $\chr \bk$ is good for $\Gv$, and that the derived group of $\Gv$ is simply connected.  Then
the category $\PCohN$ is a graded properly stratified category.  Moreover:
\begin{enumerate}
\item The tilting and cotilting objects in $\PCohN$ coincide, and are given by
\[
T_\lambda = (\til(\lambda) \otimes \cON)\la -\delta_\lambda\ra
\qquad\text{and}\qquad
T'_\lambda = (\til(\lambda) \otimes \cON)\la \delta_\lambda\ra.
\]
\item The object $\wey(\lambda) \otimes \cON$ lies in $\PCohN_{\le \lambda}$ and has a standard filtration.
\item The object $\cow(\lambda) \otimes \cON$ lies in $\PCohN_{\le \lambda}$ and has a costandard filtration.
\end{enumerate}
\end{thm}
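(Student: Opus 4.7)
The plan is to build on the properly stratified structure of $\PCohN$ already established in~\cite{bez:qes,a} and verify the three numbered claims. Under our hypotheses on $\Gv$ and $\chr\bk$, Bezrukavnikov's classification gives simples $\IC(\lambda)$ indexed (modulo Tate twist) by $\bXp$, together with standards $\Delta_\lambda$, costandards $\nabla_\lambda$, and proper versions $\bDelta_\lambda$, $\bnabla_\lambda$ satisfying axioms (1)--(5) of Definition~\ref{defn:propstrat}. What remains is axiom (6) together with Minn-Thu-Aye's three refinements.

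I would begin by proving claims (2) and (3) simultaneously, since they supply the building blocks for everything else. Tensoring with $\cON$ is exact, so the questions reduce to identifying $\wey(\lambda)\otimes\cON$ and $\cow(\lambda)\otimes\cON$ with specific objects in $\PCohN$. The key geometric input is the Springer resolution $\pi\colon\widetilde{\cN}\to\cN$: via the projection formula and Kempf vanishing (available because $\chr\bk$ is good), $\pi_*\cO_{\widetilde{\cN}}(\lambda)$ computes $\wey(\lambda)\otimes\cON$ up to a Tate twist, and one checks that this pushforward is precisely a standard object. Induction on the length of a Weyl filtration then propagates the standard filtration property to any $V\otimes\cON$ with $V\in\fDelta$-like in $\Rep(\Gv)$. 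The costandard statement is dual, obtained by applying Grothendieck--Serre duality on $\cN$, which swaps $\fDelta$ with $\fnabla$ and carries $\wey(\lambda)$ to $\cow(-w_0\lambda)$ after a shift.

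For claim (1), I would exploit the fact that the tilting module $\til(\lambda)$ admits \emph{both} a Weyl filtration and a dual Weyl filtration. By claims (2) and (3), the object $\til(\lambda)\otimes\cON$ then lies in both $\fDelta$ and $\fbnabla$, hence is tilting. It has $\IC(\lambda)$ as a composition factor with multiplicity one and all other factors $\IC(\mu)$ with $\mu<\lambda$, so it must agree with $T_\lambda$ up to an overall Tate twist. Matching against the perverse-coherent middle perversity, which records the length $\delta_\lambda$, fixes the shift as $\la\delta_\lambda\ra$. The parallel argument for cotilting produces $T'_\lambda=(\til(\lambda)\otimes\cON)\la-\delta_\lambda\ra$, confirming the coincidence of tilting and cotilting families with the asserted relative shift. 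Axiom (6) then follows formally: each $\Delta_\gamma$ embeds into a tilting object in $\fbnabla$, so $\uExt^2(\Delta_\gamma,\bnabla_\xi)$ is computed from the Hom-vanishing between proper costandards, and dually for $\uExt^2(\bDelta_\gamma,\nabla_\xi)$.

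The hardest step is showing that $\wey(\lambda)\otimes\cON$ actually has a \emph{standard} filtration, rather than merely some filtration by objects of $\PCohN_{\le\lambda}$. Sorting the filtration to match the order on $\bXp$ is where one must analyze the Koszul-type resolution supplied by the Springer map together with the weight combinatorics of $\Gv$-equivariant line bundles on flag varieties; this is the substantive technical contribution. Once that ordered filtration is in hand, all remaining assertions---including the crucial multiplicity-one property that forces indecomposability of $\til(\lambda)\otimes\cON\la\delta_\lambda\ra$---reduce to well-known facts about the $\Rep(\Gv)$-side multiplication table and the formal machinery of properly stratified categories developed in Section~\ref{subsect:propstrat-defn}.
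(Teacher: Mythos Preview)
Your proposal rests on a misreading of what is already available from~\cite{bez:qes,a}. Those references do \emph{not} supply both the $\Delta_\lambda,\nabla_\lambda$ and the proper $\bDelta_\lambda,\bnabla_\lambda$; rather, the objects called ``standard'' there satisfy only the weaker $\Ext$-conditions of axioms~(2)--(3) in Definition~\ref{defn:propstrat}, so in the terminology of this paper they are the \emph{proper} (co)standards. As the proof sketch explicitly says, constructing the genuine $\Delta_\lambda$ and $\nabla_\lambda$ (the projective cover and injective hull of $L_\lambda$ inside $\PCohN_{\le\lambda}$) is the main new step, and it is carried out via the Serre quotient $\Pi$ and its adjoints, not taken for granted.

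A second and related problem is your Springer-resolution computation. The pushforward $\pi_*\cO_{\widetilde{\cN}}(\lambda)$ is an Andersen--Jantzen sheaf, i.e.\ a \emph{proper} (co)standard object, not $\wey(\lambda)\otimes\cON$; and as the paper stresses immediately after the theorem, $\wey(\lambda)\otimes\cON$ is \emph{not} itself a standard object---the true $\Delta_\lambda$ are typically complexes with cohomology in several degrees and have no such elementary description. So your proposed identification fails on both ends. The paper's route to (2)--(3) is different: one uses the Kumar--Lauritzen--Thomsen result that $\bk[\cN]$ has a good filtration as a $\Gv$-module, together with Dlab-type numerical/homological criteria, to detect the standard (resp.\ costandard) filtration on $\wey(\lambda)\otimes\cON$ (resp.\ $\cow(\lambda)\otimes\cON$). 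Your strategy for (1)---deducing tilting from (2) and (3)---is reasonable once those are in hand (since $\fnabla\subset\fbnabla$), but it cannot get off the ground without first correctly building the $\Delta_\lambda,\nabla_\lambda$ and proving (2)--(3) by the right mechanism.
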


For completeness, we include a proof of this theorem.  The following argument is adapted from~\cite[Chapter~4]{myron}.

\begin{proof}
Throughout this proof, we will freely make use of the main result of~\cite{a}, which states that $\Db\PCohN$ and $\Db\CohN$ are equivalent.  In particular, we will compute $\Ext$-groups for $\PCohN$ by computing $\Hom$-groups in $\Db\CohN$. 

Let $\tcN$ denote the cotangent bundle of the flag variety for $\Gv$, and let $\pi: \tcN \to \cN$ be the Springer resolution.  Any weight $\lambda \in \bX$ determines a line bundle $\cO_\tcN(\lambda)$ on $\tcN$, obtained by pulling back from the flag variety.  For $\lambda \in \bXp$, let
\[
\bDelta_\lambda = \pi_*\cO_\tcN(w_0\lambda)\la \delta_\lambda\ra
\qquad\text{and}\qquad
\bnabla_\lambda = \pi_*\cO_\tcN(\lambda)\la -\delta_\lambda\ra,
\]
where $w_0$ is the longest element of the Weyl group.  (Here, $\pi_*$ is the derived functor $\Db\Coh(\tcN) \to \Db\CohN$.)  According to~\cite[Proposition~6.1]{a}, parts~\eqref{it:propstd} and~\eqref{it:propcostd} of Definition~\ref{defn:propstrat} hold\footnote{That proposition, like~\cite{bez:qes}, actually asserts that $\PCohN$ is quasi-hereditary, but the papers~\cite{a,bez:qes} use that term in a nonstandard way, imposing weaker $\Ext$-vanishing conditions on standard objects.  Of course, $\PCohN$ is not quasi-hereditary in the sense of this paper.} for $\PCohN$. For $\lambda \in \bXp$, let $L_\lambda$ denote the unique simple subobject of $\bnabla_\lambda$, or equivalently the unique simple quotient of $\bDelta_\lambda$.

That result also says that the $\{\bnabla_\lambda\}$ form a ``graded quasi-exceptional sequence'' (see~\cite[Definition~2.4]{a}). This implies that part~\eqref{it:scalar} of Definition~\ref{defn:propstrat} also holds.  Furthermore, by~\cite[Lemma~4]{bez:qes}, the recollement formalism is available, and hence so are the parts of Lemmas~\ref{lem:preserve-Tilt} and~\ref{lem:preserve-Filt} involving proper standard or proper costandard objects.

Fix $\lambda \in \bXp$, and let $\imath: \Db\PCohN_{<\lambda} \to \Db\PCohN$ and $\Pi: \Db\PCohN \to \Db(\PCohN/\PCohN_{<\lambda}$ be the inclusion and quotient functors, respectively.  We will denote their adjoints as in Proposition~\ref{prop:recollement}.  Let
\[
\Delta_\lambda = \pil\Pi(\wey(\lambda) \otimes \cON)\la -\delta_\lambda\ra.
\]
By~\cite[Lemma~5.4]{a}, $\wey(\lambda) \otimes \cON$ lies in $\PCohN_{\le \lambda}$, so $\Delta_\lambda$ also lies in $\PCohN_{\le \lambda}$ and, by Lemma~\ref{lem:preserve-Filt}, it has a filtration by various $\bDelta_\lambda\la n\ra$.  We claim that
\begin{equation}\label{eqn:delta-head}
\Hom(\Delta_\lambda,\bnabla_\mu\la n\ra)
\cong
\begin{cases}
0 & \text{if $\mu < \lambda$, or if $\mu = \lambda$ and $n \ne 0$,} \\
\bk & \text{if $\mu = \lambda$ and $n = 0$.}
\end{cases}
\end{equation}
By adjunction, we have $\Hom(\Delta_\lambda,\bnabla_\mu\la n\ra) \cong
\Hom(\wey(\lambda) \otimes \cON, \pir\Pi(\bnabla_\mu)\la n+\delta_\lambda\ra)$.  If $\mu < \lambda$, then clearly $\Pi(\bnabla_\mu) = 0$.  If $\mu = \lambda$, then $\pir\Pi(\bnabla_\lambda) \cong \bnabla_\lambda$, and then~\eqref{eqn:delta-head} follows from~\cite[Lemma~5.5]{a}.  

We will next show that for all $\mu \in \bXp$, we have
\begin{equation}\label{eqn:weyl-ext}
\uExt^k(\wey(\lambda) \otimes \cON, \bnabla_\mu) = 0 \qquad\text{for all $k > 0$.}
\end{equation}
By~\cite[Theorem~2]{klt}, the object $\bnabla_\mu$ is actually a coherent sheaf.  Let $\Gamma(\bnabla_\mu)$ be its space of global sections.  As in the proof of~\cite[Lemma~5.5]{a}, we have $\uExt^k(\wey(\lambda) \otimes \cON, \bnabla_\mu) \cong \uExt^k_{\Gv}(\wey(\lambda), \Gamma(\bnabla_\mu))$, and the latter vanishes because, by~\cite[Theorem~7]{klt}, as a $\Gv$-representation, $\Gamma(\bnabla_\mu)$ has a good filtration. 

We also claim that for all $\mu \in \bXp$, we have
\begin{equation}\label{eqn:delta-ext} 
\uExt^k(\Delta_\lambda, \bnabla_\mu) = 0 \qquad\text{for all $k > 0$.}
\end{equation}
If $\mu \ne \lambda$, the claim follows from the recollement formalism.  If $\mu = \lambda$, it follows from~\eqref{eqn:weyl-ext} by a calculation like that used to prove~\eqref{eqn:delta-head} (again invoking~\cite[Lemma~5.5]{a}).

Since every simple object in $\PCohN_{\le\lambda}$ occurs as the socle of some $\bnabla_\mu\la n\ra$ with $\mu \le \lambda$, we see from~\eqref{eqn:delta-head} that $\Delta_\lambda$ has a unique simple quotient, namely $L_\lambda$.
  
Next, let $K_\mu$ denote the cokernel of $L_\mu \hookrightarrow \bnabla_\mu$, and consider the exact sequence
\[
\cdots \to \Hom(\Delta_\lambda, K_\mu\la n\ra) \to \Ext^1(\Delta_\lambda, L_\mu\la n\ra) \to \Ext^1(\Delta_\lambda, \bnabla_\mu\la n\ra) \to \cdots.
\]
If $\mu \le \lambda$, then $K_\mu$ must lie in $\PCohN_{<\lambda}$, and the preceding paragraph implies that the first term vanishes.  The last term vanishes by~\eqref{eqn:delta-ext}, so the middle term vanishes for all $\mu \le \lambda$.  Thus, $\Delta_\lambda$ is a projective object in $\PCohN_{\le \lambda}$, and hence the projective cover of $L_\lambda$.  Since $L_\lambda$ is the unique simple quotient of $\bDelta_\lambda$, $\Delta_\lambda$ is also the projective cover of $\bDelta_\lambda$.  We have now established part~\eqref{it:standard} of Definition~\ref{defn:propstrat}.  The first half of part~\eqref{it:ext2} holds by~\eqref{eqn:delta-ext}.

Let $\mathcal{S}$ be the Serre--Grothendieck duality functor on $\Db\CohN$.  This is an antiautoequivalence that preserves $\PCohN$ and swaps $\bDelta_\lambda$ with $\bnabla_{-w_0\lambda}$.  Define
\[
\nabla_\lambda = \mathcal{S}(\Delta_{-w_0\lambda}).
\]
(In fact, one can check that $\nabla_\lambda \cong \pir\Pi(\cow(\lambda) \otimes \cON)\la \delta_\lambda\ra$.)  It follows from the previously established properties of $\Delta_\lambda$ that part~\eqref{it:costandard} and the second half of part~\eqref{it:ext2} of Definition~\ref{defn:propstrat} hold for $\PCohN$.  We have completed the proof that $\PCohN$ is a graded properly stratified category.

We saw earlier that $\wey(\lambda) \otimes \cON$ lies in $\PCohN_{\le \lambda}$.  By~\eqref{eqn:weyl-ext} and the criterion in~\cite[Theorem~1.6(iii)]{ahlu}, we see that $\wey(\lambda) \otimes \cON$ has a standard filtration. By Serre--Grothendieck duality, $\cow(\lambda) \otimes \cON$ lies in $\PCohN_{\le \lambda}$ and has a costandard filtration.  It follows immediately that $\til(\lambda) \otimes \cON$ lies in $\PCohN_{\le \lambda}$ and is both tilting and cotilting.  There are nonzero maps $\Delta_\lambda \to \wey(\lambda) \otimes \cON\la-\delta_\lambda\ra \to \til(\lambda) \otimes \cON\la-\delta_\lambda\ra$ and $\til(\lambda) \otimes \cON\la \delta_\lambda\ra \to \cow(\lambda) \otimes \cON\la \delta_\lambda\ra \to \nabla_\lambda$.  We thus obtain the desired formulas for $T_\lambda$ and $T'_\lambda$.
\end{proof}

Note that this theorem does \emph{not} say that $\wey(\lambda) \otimes \cON$ is itself a standard object.  Indeed, the standard objects in $\PCohN$ do not, in general, belong to $\CohN$.  The costandard objects of $\PCohN$ do happen to lie in $\CohN$, but they are not generally of the form $\cow(\lambda) \otimes \cON$.

\begin{cor}\label{cor:pcoh-even}
Let $\Gamma \subset \bXp$ be a finite order ideal.  Suppose $V_1 \in \Rep(\Gv)$ has a Weyl filtration, and $V_2 \in \Rep(\Gv)$ has a good filtration.  Then the graded vector space $\uHom(\Pi(V_1 \otimes \cON), \Pi(V_2 \otimes \cON))$ is concentrated in even degrees.
\end{cor}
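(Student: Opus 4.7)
The plan is to reduce the statement to a concrete Hom computation in $\CohGmN$, by combining the filtration results of Theorem~\ref{thm:myron} with the surjectivity statement of Corollary~\ref{cor:std-costd-surj}. First I would note that, since $V_1$ admits a Weyl filtration, Theorem~\ref{thm:myron}(2) presents $V_1 \otimes \cON$ as a successive extension of objects $\wey(\mu) \otimes \cON$, each having a standard filtration, so $V_1 \otimes \cON \in \fDelta$. Dually, Theorem~\ref{thm:myron}(3) gives $V_2 \otimes \cON \in \fnabla$. Since $\PCohN$ is graded properly stratified with coinciding tilting and cotilting objects (Theorem~\ref{thm:myron}), Corollary~\ref{cor:std-costd-surj} applies and produces a surjection
\[
\uHom_{\PCohN}(V_1 \otimes \cON,\, V_2 \otimes \cON) \twoheadrightarrow \uHom(\Pi(V_1 \otimes \cON),\, \Pi(V_2 \otimes \cON)).
\]
It therefore suffices to show the source is concentrated in even degrees.

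Because both objects lie in the heart $\PCohN$, the source equals the Hom computed in $\CohGmN$, and the adjunction $\Hom(V_1 \otimes \cON, -) \cong \Hom(\cON, V_1^* \otimes -)$ identifies it with $(V_1^* \otimes V_2 \otimes \bk[\cN])^{\Gv}$ as a graded vector space. The prescribed $\Gm$-action $(g,z)\cdot x = z^{-2}\Ad(g)(x)$ on $\cN$ scales linear functions by $z^2$, so $\bk[\cN]$ is concentrated in even degrees; since $V_1^*$ and $V_2$ carry trivial $\Gm$-action and live in degree $0$, the $\Gv$-invariants of the tensor product are likewise concentrated in even degrees, as required. The argument amounts entirely to assembling material already developed in this section, and the only point requiring careful bookkeeping is the grading convention: it is the exponent $-2$ (rather than $-1$) in the $\Gm$-action that forces the evenness.
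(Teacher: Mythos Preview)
Your proposal is correct and follows essentially the same route as the paper's own proof: reduce to the case $\Gamma=\varnothing$ via the surjection of Corollary~\ref{cor:std-costd-surj} (using Theorem~\ref{thm:myron} to place $V_1\otimes\cON$ in $\fDelta$ and $V_2\otimes\cON$ in $\fnabla$), then observe that $\uHom_{\CohGmN}(V_1\otimes\cON, V_2\otimes\cON)$ is even because $\bk[\cN]$ is concentrated in even degrees. One small remark: the step ``because both objects lie in the heart $\PCohN$, the source equals the Hom computed in $\CohGmN$'' is correct in conclusion but not quite for the reason you give---what matters is that $V_i\otimes\cON$ are honest coherent sheaves, so $\Hom$ in $\Db\CohGmN$ agrees with $\Hom$ in $\CohGmN$.
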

\begin{proof}
When $\Gamma = \varnothing$, it is clear that the space $\uHom_{\CohN}(V_1 \otimes \cON, V_2 \otimes \cON) \cong \uHom_{\Rep(\Gv)}(V_1, V_2 \otimes \bk[\cN])$ is concentrated in even degrees, since the coordinate ring $\bk[\cN]$ is concentrated in even degrees.  For general $\Gamma$, the result then follows from Corollary~\ref{cor:std-costd-surj}.
\end{proof}

\section{Background on Parity sheaves}
\label{sect:parity}

Let $X$ be a complex algebraic variety or ind-variety equipped with a fixed algebraic stratification (as in \cite[Definition 3.2.23]{cg}) $X = \bigsqcup_{\gamma \in \Omega} X_\gamma$, where $\Omega$ is some indexing set.  In the ind-variety case, we assume that the closure of each $X_\gamma$ is an ordinary finite-dimensional variety; in particular, the closure of each stratum should contain only finitely many other strata.  Let $\bk$ be a field.  Assume the following conditions hold:
\begin{itemize}
\item Each stratum $X_\gamma$ is simply connected.
\item The cohomology groups $H^k(X_\gamma;\bk)$ vanish when $k$ is odd.
\end{itemize}
Let $\Db(X,\bk)$, or simply $\Db(X)$, denote the bounded derived category of $\bk$-sheaves on $X$ (in the analytic topology).  Let $\Dbom(X,\bk)$, or simply $\Dbom(X)$, denote the full triangulated subcategory consisting of complexes that are supported on the union of finitely many strata and are constructible with respect to the given stratification.  For each stratum $X_\gamma$, let $j_\gamma: X_\gamma \to X$ be the inclusion map. For a locally closed subspace $Y\subset X$, we denote the constant sheaf on $Y$ by~$\ubk_Y$. 

\begin{defn}\label{defn:parity}
An object $\cF \in \Dbom(X)$ is said to be \emph{$*$-even} (resp.~\emph{$!$-even}) if for each $\gamma$, the cohomology sheaves $\cH^k(j_\gamma^*\cF)$ (resp.~$\cH^k(j_\gamma^!\cF)$) vanish for $k$ odd.  It is \emph{even} if it is both $*$-even and $!$-even.

The terms \emph{$*$-odd}, \emph{$!$-odd}, and \emph{odd} are defined similarly.  An object is \emph{parity} if it is a direct sum of an even 
object and an odd object.
\end{defn}

The assumptions above are significantly more restrictive than those in~\cite{jmw}, but we will not require the full generality of~{\it loc.~cit}.  The following statement classifies the indecomposable parity objects.

\begin{thm}[{\cite[Theorem~2.12]{jmw}}]
Let $\cE$ be an indecomposable parity object.  Then there is a stratum $X_\gamma$ such that $\cE$ is supported on $\overline{X_\gamma}$, and $\cE|_{X_\gamma}$ is a shift of the constant sheaf $\ubk_{X_\gamma}$.  Moreover, if $\cE'$ is another indecomposable parity object with the same support as $\cE$, then $\cE'$ is (up to shift) isomorphic to $\cE$.
\end{thm}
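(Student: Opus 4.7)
The plan rests on a fundamental Hom-vanishing lemma, which I would establish first: for any $*$-even $\cF$ and $!$-odd $\cG$ in $\Dbom(X)$, one has $\Hom(\cF, \cG) = 0$, and likewise with the roles of $*$ and $!$ interchanged. I would prove this by induction on the number of strata in the support of $\cF$. The base case is a single stratum $X_\gamma$, where the computation reduces to $\Hom^0(\ubk_{X_\gamma}[2a], \ubk_{X_\gamma}[2b+1]) = H^{2b+1-2a}(X_\gamma;\bk)$, which vanishes by the standing odd-cohomology hypothesis. The inductive step uses the attaching triangle associated to an open stratum in $\mathrm{supp}(\cF)$, together with adjunction $(j_*, j^*)$ and $(i^!, i_*)$ to reduce to the base case and a strictly smaller stratification.

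For existence, let $\cE$ be an indecomposable parity object; after a shift I may assume $\cE$ is even. The category of parity objects is Krull--Schmidt (endomorphism rings are finite-dimensional over $\bk$, so indecomposability is equivalent to $\End(\cE)$ being local). Let $Y = \mathrm{supp}(\cE)$ and choose a stratum $X_\gamma$ open in $Y$. Since $X_\gamma$ is simply connected, evenness implies $\cE|_{X_\gamma} \cong \bigoplus_{n} \ubk[-2n]^{a_n}$. Next I would show that any splitting of $j_\gamma^*\cE$ as a direct sum of shifted constant sheaves lifts to a splitting of $\cE$: a candidate idempotent on $j_\gamma^*\cE$ lifts to $\End(\cE)$ because, using the attaching triangle for the closed inclusion of $Y \setminus X_\gamma$, the obstruction lives in a group of the form $\Hom(i_*i^*\cE, \cE)$ or $\Ext^1(\cdots)$ which vanishes by the Hom-vanishing lemma (the two factors have opposite parities). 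Indecomposability then forces $j_\gamma^*\cE \cong \ubk[d]$ for a single integer $d$. The same argument shows that $Y$ cannot contain any stratum outside $\overline{X_\gamma}$: if it did, I would apply the procedure to a stratum open in $Y \setminus \overline{X_\gamma}$, producing a nontrivial idempotent that restricts to zero on $X_\gamma$ but to a nonzero projector elsewhere, contradicting indecomposability.

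For uniqueness, let $\cE'$ be another indecomposable parity object with $\mathrm{supp}(\cE') = \overline{X_\gamma}$ and $j_\gamma^*\cE' \cong \ubk[d']$; after shifting we may assume $d = d'$. The Hom-vanishing lemma again implies that the restriction map $\Hom(\cE, \cE') \to \Hom(j_\gamma^*\cE, j_\gamma^*\cE') = \bk$ is surjective, so the identity on $\ubk[d]$ lifts to a morphism $f : \cE \to \cE'$; symmetrically one has $g : \cE' \to \cE$. The composite $g \circ f \in \End(\cE)$ restricts to the identity on $X_\gamma$, so it is not nilpotent, hence is a unit in the local ring $\End(\cE)$; by symmetry $f \circ g$ is a unit, so $f$ is an isomorphism.

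The main obstacle is the lifting step in the middle of the existence argument: converting a direct-sum decomposition on the open stratum into a genuine decomposition of $\cE$. This is where parity does all the work — the obstruction groups controlling the lift are precisely the $\Hom$-groups between objects of opposite parities, and the key lemma is what kills them. Everything else is formal manipulation with attaching triangles and the Krull--Schmidt property.
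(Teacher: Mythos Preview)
The paper does not prove this theorem; it is quoted verbatim from \cite[Theorem~2.12]{jmw} without argument, so there is no ``paper's own proof'' to compare against. Your sketch is essentially the proof given in \cite{jmw}: the $\Hom$-vanishing between $*$-even and $!$-odd objects is their Corollary~2.8, and the surjectivity of $\End(\cE) \to \End(j_\gamma^*\cE)$ together with locality of $\End(\cE)$ is the content of their Proposition~2.11.

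Two small points of phrasing. First, in the lifting step you write that the obstruction vanishes because ``the two factors have opposite parities.'' In fact $i^*\cE$ and $i^!\cE$ are both even when $\cE$ is even; what kills the obstruction is that it sits in $\Hom^1(i^*\cE,i^!\cE)$, and the odd degree shift produces the parity mismatch. Second, you speak of lifting an idempotent, but a preimage of an idempotent need not be idempotent. The clean way to phrase it (and the way \cite{jmw} does) is: $\End(\cE)$ is local, the restriction map to $\End(j_\gamma^*\cE)$ is a surjective ring homomorphism, hence the target is local or zero, so $j_\gamma^*\cE$ is indecomposable or zero. Your argument can be read this way (lift $e$ to any $\tilde e$; locality forces $\tilde e$ or $1-\tilde e$ to be a unit, hence so is its image), but it is worth saying explicitly. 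With these clarifications the argument is complete and matches the source.
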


\begin{defn}
The variety $X$ is said to \emph{have enough parity objects} if for every stratum $X_\gamma$, there is an indecomposable parity object $\cE_\gamma$ that is supported on the closure $\overline{X_\gamma}$, and such that $\cE_\gamma|_{X_\gamma} \cong \ubk_{X_\gamma}[\dim X_\gamma]$.
\end{defn}

For $X$ as above, let $\Parity(X) \subset \Dbom(X)$ denote the full additive subcategory consisting of parity objects.  The main result of this section is the following geometric analogue of Proposition~\ref{prop:tilting-quot}, comparing a Verdier quotient of $\Dbom(X)$ to a ``naive'' quotient (cf.~\eqref{eqn:naive-quot}).  The statement makes use of the following observation: for any closed inclusion  of a union of strata $i: Y \to X$, we can identify $\Parity(Y)$ with a full subcategory of $\Parity(X)$ via $i_*$.  

\begin{prop}\label{prop:parity-quot}
Assume that $X$ has enough parity objects, and let $Y \subset X$ be a closed union of finitely many strata.  The open inclusion $j: X \setminus Y \to X$ induces an equivalence of categories
\begin{equation}\label{eqn:parity-quot}
\bar{\jmath^*}: \Parity(X) \aq \Parity(Y) \simto \Parity(X \setminus Y).
\end{equation}
\end{prop}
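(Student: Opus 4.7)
The plan is to imitate the proof of Proposition~\ref{prop:tilting-quot}, substituting the six-functor recollement for the closed--open pair $(Y, X \setminus Y)$ in place of the categorical recollement, and the JMW Hom-vanishing between opposite-parity objects in place of the $\Ext$-vanishing between standards and costandards. Since $j^* i_* = 0$, the restriction $j^*: \Parity(X) \to \Parity(X \setminus Y)$ annihilates $i_* \Parity(Y) \subset \Parity(X)$ and so descends to a well-defined functor $\bar{\jmath^*}$ on the naive quotient. Essential surjectivity can be checked on indecomposables: for each stratum $X_\gamma \subset X \setminus Y$, the ``enough parity objects'' hypothesis yields $\cE_\gamma \in \Parity(X)$ with $\cE_\gamma|_{X_\gamma} \cong \ubk[\dim X_\gamma]$, and the classification of indecomposable parity objects shows that $j^*\cE_\gamma$ contains a unique indecomposable summand supported on $\overline{X_\gamma} \cap (X \setminus Y)$, which realizes the desired indecomposable parity object on $X \setminus Y$.

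The heart of the argument is full faithfulness. For $\cE, \cF \in \Parity(X)$, apply $\Hom({-}, \cF)$ to the localization triangle $j_! j^* \cE \to \cE \to i_* i^* \cE \to [1]$ to get
\[
\Hom(i^*\cE, i^!\cF) \to \Hom(\cE, \cF) \to \Hom(j^*\cE, j^*\cF) \to \Hom(i^*\cE, i^!\cF[1]).
\]
The image of the middle arrow is $\Hom(\cE,\cF)$ modulo maps factoring as $\cE \to i_* i^*\cE \to \cF$; since $i^*\cE \in \Parity(Y)$, every such map factors through an object of $i_*\Parity(Y)$, and conversely any factorization $\cE \to i_*\cG \to \cF$ with $\cG \in \Parity(Y)$ passes through $i_*i^*\cE$ via the unit of the $(i^*, i_*)$ adjunction. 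This identifies the image with $\Hom_{\Parity(X) \aq \Parity(Y)}(\cE, \cF)$, so full faithfulness of $\bar{\jmath^*}$ reduces to the vanishing $\Hom(i^*\cE, i^!\cF[1]) = 0$.

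This last vanishing is the main technical step, and the point at which the standing hypotheses on $X$ are used. The functors $i^*$ and $i^!$ preserve $*$-parity and $!$-parity respectively, because the stratum inclusions into $Y$ compose with $i$ to give the stratum inclusions into $X$, so the $*$-stalks of $i^*\cE$ and the $!$-stalks of $i^!\cF$ along a stratum of $Y$ agree with the corresponding stalks of $\cE$ and $\cF$. Decomposing $\cE$ and $\cF$ into even and odd direct summands, each surviving piece of the target Hom-group pairs a $*$-parity object on $Y$ with a $!$-parity object on $Y$ of opposite parity, and such Hom-groups vanish by the standard JMW argument~\cite{jmw}. Once this vanishing is in hand, both directions of full faithfulness are immediate, and one avoids the combinatorial induction on $|\Gamma|$ used in the proof of Proposition~\ref{prop:tilting-quot}.
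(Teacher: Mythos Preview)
Your surjectivity argument and your vanishing $\Hom(i^*\cE, i^!\cF[1]) = 0$ are both correct; the JMW $\Hom$-vanishing between a $*$-parity object and a $!$-parity object of opposite parity is exactly what is needed there, and it applies even though neither $i^*\cE$ nor $i^!\cF$ is parity in the full sense.

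The gap is in the identification of the kernel. You correctly observe that the kernel $K$ of $\Hom(\cE,\cF) \to \Hom(j^*\cE,j^*\cF)$ consists of maps factoring through $i_*i^*\cE$, and that the kernel $K'$ of the quotient map consists of maps factoring through some object of $i_*\Parity(Y)$. The containment $K' \subset K$ is fine. But for $K \subset K'$ you write ``since $i^*\cE \in \Parity(Y)$,'' and this is not true in general: for a parity object $\cE$ on $X$, the restriction $i^*\cE$ to a closed union of strata is only $*$-parity, not $!$-parity. (Dually, $i^!\cF$ is only $!$-parity.) So there is no reason for a map factoring through $i_*i^*\cE$ to factor through an honest parity object on $Y$, and the claimed equality $K = K'$ is unjustified.

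This is precisely why the paper performs the induction on the number of strata in $Y$ that you were hoping to avoid. When $Y$ consists of a single closed stratum, the notions of $*$-parity, $!$-parity, and parity all coincide on $Y$, so $i^*\cE$ \emph{is} parity and your argument goes through. The general case is then reduced to this one by writing $Y$ as an iterated extension of single strata and using the transitivity of naive quotients, exactly parallel to the inductive step in Proposition~\ref{prop:tilting-quot}. Your outline becomes a correct proof once you restore that induction; without it, the step ``$i^*\cE \in \Parity(Y)$'' is a genuine error.
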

\begin{proof}
Let $Q: \Parity(X) \to \Parity(X) \aq \Parity(Y)$ be the quotient functor, and let $i: Y \to X$ be the inclusion map.  It is clear that $j^*(\Parity(Y)) = 0$, so there is unique functor $\bar{\jmath^*}$ such that $\bar{\jmath^*} \circ Q \cong j^*$.  Because $X$ has enough parity objects, every indecomposable parity object occurs as a direct summand of some object in the image of $\bar{\jmath^*}$.  By~\cite[Proposition~2.11]{jmw}, $\bar{\jmath^*}$ sends indecomposable objects to indecomposable objects, so it is essentially surjective.

It remains to prove that $\bar{\jmath^*}$ is fully faithful.  We proceed by induction on the number of strata in $Y$.  Suppose first that $Y$ consists of a single closed stratum $X_0$.  Let $\cE,\cF \in \Dbom(X)$ be parity objects, and consider the diagram
\begin{equation}\label{eqn:parity-quot-comp}
\vcenter{\hbox{\begin{tikzpicture}
\node (a) at (0,0) {$\Hom(\cE,\cF)$};
\node (b) at (4,0) {$\Hom_{\Parity(X)\aq \Parity(Y)}(\cE,\cF)$};
\node (c) at (8,0) {$\Hom(j^*\cE,j^*\cF)$};
\draw[->>, >=angle 90, below] (a) -- node {$Q$} (b);
\draw[->, >=angle 90, below] (b) to node {$\bar{\jmath^*}$} (c);
\draw[->, >=angle 90,bend left=12, above] (a) to node {$j^*$} (c);
\end{tikzpicture}}}
\end{equation}
It suffices to consider the case where $\cE$ and $\cF$ are both indecomposable.  If $\cE$ is even and $\cF$ is odd, or vice versa, then both $\Hom(\cE,\cF)$ and $\Hom(j^*\cE,j^*\cF)$ vanish by~\cite[Corollary~2.8]{jmw}, so $\bar{\jmath^*}$ is trivially an isomorphism.  We henceforth assume that $\cE$ and $\cF$ are both even.  (The case where they are both odd is identical.)  Apply $\Hom({-},\cF)$ to the distinguished triangle $j_!j^*\cE \to \cE \to i_*i^*\cE \to$ to get the long exact sequence
\[
\cdots \to \Hom(i^*\cE,i^!\cF) \to
\Hom(\cE,\cF) \overset{j^*}{\to} \Hom(j^*\cE,j^*\cF) \to \Hom^1(i^*\cE,i^!\cF) \to \cdots
\]
Since $i^*\cE$ is $*$-even and $i^!\cF$ is $!$-even, we see from~\cite[Corollary~2.8]{jmw} that the last term above vanishes.  It follows that the map labelled $j^*$ in~\eqref{eqn:parity-quot-comp} is surjective, and its kernel can be identified with the space
\[
K = \{ f: \cE \to \cF \mid \text{$f$ factors as $\cE \to i_*i^*\cE \to i_*i^!\cF \to \cF$ } \}.
\]
We deduce that $\bar{\jmath^*}$ is surjective as well.  Now, the kernel of $Q$ in~\eqref{eqn:parity-quot-comp} is the space
\[
K' = \{ f : \cE \to \cF \mid \text{$f$ factors through an object of $\Parity(Y)$ } \}.
\]
We already know that $K' \subset K$.  But since $Y$ consists of a single closed stratum, the object $i^*\cE$ is actually even (not just $*$-even), and likewise for $i^!\cF$.  So $K = K'$, and we conclude that $\bar{\jmath^*}$ in~\eqref{eqn:parity-quot-comp} is an isomorphism.

For the general case, let $S$ be an open stratum in $Y$, and let $B = Y \setminus S$ and $X' = X \setminus B$.  Then $S$ is closed in $X'$, and by induction, we have natural equivalences
\begin{align*}
\Parity(X') \aq \Parity(S) &\simto \Parity(X' \setminus S) \cong \Parity(X \setminus Y), \\
\Parity(X) \aq \Parity(B) &\simto \Parity(X \setminus B) \cong \Parity(X'), \\
\Parity(Y) \aq \Parity(B) &\simto \Parity(Y \setminus B) \cong \Parity(S).
\end{align*}
The desired equivalence~\eqref{eqn:parity-quot} follows from these and the general observation that
\[
\Parity(X)\aq \Parity(Y) \cong (\Parity(X)\aq \Parity(B)) \bigaq (\Parity(Y)\aq \Parity(B)).\qedhere
\]
\end{proof}

\section{Parity sheaves on Kac--Moody flag varieties}
\label{sect:ginzburg}

In this section, we study Ext-groups of parity sheaves on flag varieties for Kac--Moody groups.  The result below
will be applied elsewhere in the paper only to affine Grassmannians, but it is no more effort to prove it in this generality.  
As in the previous section, $\bk$ denotes an arbitrary field.

\begin{thm}\label{thm:ginzburg}
Let $X$ be a generalized flag variety for a Kac--Moody group, equipped with the Bruhat stratification, and let $\cE$ and $\cF$  be two parity objects with respect to that stratification.  The natural map
\[
\Hom^\bullet_{\Db(X)}(\cE, \cF) \to \Hom^\bullet_{H^\bullet(X;\bk)}(H^\bullet(\cE), H^\bullet(\cF))
\]
is an isomorphism.
\end{thm}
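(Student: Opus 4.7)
The natural map is induced by $R\Gamma = H^\bullet$, using the identification $H^\bullet(X;\bk) = \End^\bullet_{\Dbb(X)}(\ubk_X)$, so that the $H^\bullet(X;\bk)$-action on cohomology comes from pre-composition. Both sides of the claimed isomorphism are bilinear and distribute over direct sums in each argument, so it suffices, by the classification of parity sheaves recalled in the previous section, to treat the case where $\cE = \cE_v$ and $\cF = \cE_w$ are indecomposable.

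I would then proceed by induction on $\ell(w)$. Let $j: X_w \hookrightarrow X$ denote the inclusion of the open Schubert cell in $\overline{X_w}$, and $i: Z \hookrightarrow X$ the closed complement. The distinguished triangle
\[
j_! \ubk_{X_w}[\dim X_w] \to \cE_w \to i_* i^* \cE_w \to
\]
yields, after applying $H^\bullet$, a short exact sequence of graded $H^\bullet(X;\bk)$-modules (the connecting map vanishes by parity). Applying $\Hom^\bullet_{\Dbb(X)}(\cE_v, -)$ on the topological side and $\Hom^\bullet_{H^\bullet(X;\bk)}(H^\bullet(\cE_v), -)$ on the algebraic side produces long exact sequences linked by the natural map. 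Both long exact sequences should collapse to short exact sequences -- on the topological side by parity-vanishing of $\Hom$ between parity objects of the same parity (\cite[Corollary~2.8]{jmw}), and on the algebraic side by projectivity of $H^\bullet(\cE_v)$ over $H^\bullet(X;\bk)$. The outer terms are then compared: $i_* i^* \cE_w$ has strictly smaller support and is covered by the inductive hypothesis, while $j_! \ubk_{X_w}[\dim X_w]$ reduces to an essentially tautological computation on the affine cell $X_w$. The five lemma completes the induction.

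The main obstacle is establishing the projectivity (in fact, freeness) of $H^\bullet(\cE_v)$ as an $H^\bullet(X;\bk)$-module -- a pointwise-purity or equivariant-formality property for parity sheaves on cellular varieties. I would prove this by a parallel induction exploiting the evenness of stalks and costalks of $\cE_v$ together with the structure of the Bruhat filtration on $X$. With this freeness in hand, all the exactness needed for the five lemma is guaranteed, and the main induction described above runs through without further complication.
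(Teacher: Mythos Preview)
There are two genuine gaps, both already visible for $X = \mathbb{P}^1 = \mathrm{SL}_2/B$.

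First, $j_!\ubk_{X_w}[\dim X_w]$ is \emph{not} a parity object, so \cite[Corollary~2.8]{jmw} does not apply and your topological long exact sequence does not collapse. For $X_w = \mathbb{A}^1 \subset \mathbb{P}^1$ with closed point $i:\{\infty\}\hookrightarrow \mathbb{P}^1$, applying $i^!$ to the triangle $j_!\ubk_{\mathbb{A}^1} \to \ubk_{\mathbb{P}^1} \to i_*\ubk \to$ gives $i^! j_!\ubk_{\mathbb{A}^1} \cong \ubk[-2]\oplus\ubk[-1]$, so the costalks are not of a single parity. Taking $\cE_v = i_*\ubk$, the connecting map $\Hom^0(i_*\ubk,i_*\ubk) \to \Hom^1(i_*\ubk, j_!\ubk_{\mathbb{A}^1})$ is nonzero. (Relatedly, $i_*i^*\cE_w$ is only $*$-parity, not parity, so your appeal to the inductive hypothesis for that term is also unjustified as stated.)

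Second, and more fundamentally, $H^\bullet(\cE_v)$ is \emph{not} projective over $H^\bullet(X;\bk)$ in general: with $X=\mathbb{P}^1$ one has $H^\bullet(X;\bk)\cong \bk[x]/(x^2)$ with $|x|=2$, and for the skyscraper parity sheaf $\cE_v=i_*\ubk$ one gets $H^\bullet(\cE_v)=\bk$, which is neither free nor projective over $\bk[x]/(x^2)$. So the ``parallel induction'' you propose cannot establish freeness, because the statement is false. Worse, once projectivity fails, applying $\Hom_{H^\bullet(X)}(H^\bullet(\cE_v),-)$ to a short exact sequence yields a long exact sequence involving higher $\Ext$-groups over $H^\bullet(X)$, and there is no counterpart to these on the topological side; the two long exact sequences are not even of the same shape.

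The paper circumvents all of this by a different mechanism, following Ginzburg. For each $w$ it constructs an explicit class $c_w\in H^{2\ell(w)}(X;\bk)$ from the \emph{opposite} Bruhat cell (Birkhoff variety) $\bar X^-_w$, and shows that multiplication by $c_w$ carries $H^\bullet(j_w^*\cE_Z)$ isomorphically onto $H^\bullet(j_{w!}j_w^!\cE_Z)$ inside $H^\bullet(\cE_Z)$; this plays the role that freeness would have played in your scheme. The geometric input making this work is the surjectivity of restriction $H^\bullet(\cE)\to H^\bullet(k_w^*\cE)$ to torus fixed points, supplied by Fiebig--Williamson in place of Ginzburg's original appeal to mixed Hodge modules. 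The theorem is then deduced from Proposition~\ref{prop:ginzburg}, which is proved by a diagram chase inducting on the number of Bruhat cells in a finite support $Z$, working simultaneously with $\cE_Z=i_{Z*}i_Z^*\cE$ and $\cF^Z=i_{Z*}i_Z^!\cF$.
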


For finite flag varieties, this result (with some minor restrictions on $\chr \bk$) is due to Soergel~\cite{soergel1, soergel2}.  In~\cite{gin:pscsa}, Ginzburg proved a very similar result for simple perverse $\C$-sheaves 
on smooth projective varieties equipped with a suitable $\C^\times$-action.  The proof below follows the outline of Ginzburg's argument quite closely.  One exception occurs at a step (see~\cite[Proposition~3.2]{gin:pscsa}) where Ginzburg invokes the theory of mixed Hodge modules: here, we substitute an argument of Fiebig--Williamson that relies on the geometry of Schubert varieties.

\begin{rmk}
When $\bk = \C$, Ginzburg had already observed in a remark at the end of~\cite{gin:pscsa} that his result could be generalized to the Kac--Moody case.  Thus, in that case, this section can be regarded as an exposition of Ginzburg's remark.
\end{rmk}

\begin{rmk}
Although only field coefficients are used in the present paper, the arguments in this section go through unchanged if we instead take $\bk$ to be a complete discrete valuation ring, so Theorem~\ref{thm:ginzburg} holds in that setting.
\end{rmk}

We begin with some notation.  Let $\cG$ be a Kac--Moody group (over $\C$), with maximal torus $\cT \subset \cG$ and standard Borel subgroup $\cB \subset \cG$.  Let $\cB^- \subset \cG$ denote the opposite Borel subgroup to $\cB$ (with respect to $\cT$).  Let $\cP \subset \cG$ be a standard parabolic subgroup of finite type (in the sense of \cite[\S1.2.2]{kum:kmg}), with Levi factor~$L_\cP$.

For the remainder of the section, $X$ will denote the generalized flag variety $X = \cG/\cP$.  Let $W$ (resp.~$W_\cP$) be the Weyl group of $\cG$ (resp.~$L_\cP$), and let $W^\cP$ be the set of minimal-length representatives of the set of cosets $W/W_\cP$.  The length of an element $w \in W^\cP$ will be denoted by $\ell(w)$.  It is well known that the $\cT$-fixed points and the $\cB$-orbits on $X$ are both naturally in bijection with $W^\cP$.  For $w \in W^\cP$, let $e_w \in X$ be the corresponding $\cT$-fixed point, and let $X_w = \cB \cdot e_w$ be the corresponding Bruhat cell.  We will also need the ``opposite Bruhat cell'' $X^-_w = \cB^- \cdot e_w$.  Recall that $X_w \cap X^-_w = \{e_w\}$, and that the intersection is transverse.  Moreover, $X_w$ is isomorphic to an affine space of dimension $\ell(w)$.  In general, $X^-_w$ may have infinite dimension, but it has codimension $\ell(w)$ (see~\cite[Lemma~7.3.10]{kum:kmg}).  

Recall that $\Db(X)$ is the bounded derived category of all $\bk$-sheaves on $X$. Let $\Dbb(X)$ be the full subcategory of $\Db(X)$ consisting of complexes $\mathcal{F}$ such that
\begin{enumerate}
\item $\mathcal{F}$ is constructible with respect to the stratification by $\cB$-orbits, and 
\item the support of $\cF$ is contained in the union of finitely many $\bar X_w$.
\end{enumerate}
Let $\ParityB(X) \subset \Dbb(X)$ be the additive category of parity objects. Let
\[
j_w: X_w \to X
\qquad\text{and}\qquad
\bar\jmath_w: X^-_w \to X
\]
denote the inclusion maps. For any closed subset $Z \subset X$, we let $i_Z: Z \to X$ be the inclusion map, and for an object $\cE \in \Dbb(X)$, we put
\[
\cE_Z := i_{Z*}i_Z^*\cE
\qquad\text{and}\qquad
\cE^Z := i_{Z*}i_Z^!\cE.
\]
For simplicity, the inclusions of closures of $\cB$- and $\cB^-$-orbits are denoted
\[
i_w: \bar X_w \to X
\qquad\text{and}\qquad
\bar\imath_w: \bar X^-_w \to X,
\]
rather than $i_{\bar X_w}$ and $i_{\bar X^-_w}$.  Recall that $\bar X_w$ is known as a \emph{Schubert variety}, and $\bar X^-_w$ as a \emph{Birkhoff variety}.

Certain complexes $\mathcal{F}\in\Db(X)$ that do \emph{not} belong to $\Dbb(X)$ will also appear in our arguments, including objects whose support may be infinite-dimensional and hence not locally compact.  Some caution is required when working with such objects, especially when applying functors of the form $f_!$ and $f^!$.  In this section, whenever infinite-dimensional supports are involved, the functors $f_!$ and $f^!$ will only be used when $f$ is an inclusion of a locally closed subset.  See Appendix~\ref{app:nonloccpt} for the definition and basic properties of these functors on non-locally compact spaces.

\begin{lem}
Let $Z \subset X$ be a finite union of Schubert varieties, and let $X_w \subset Z$ be a Bruhat cell that is open in $Z$.  If $\cE$ is $*$-even, then for each $k$, there is a natural short exact sequence
\[
0 \to H^k(j_{w!} j_w^! \cE_Z) \to H^k(\cE_Z) \to H^k(\cE_{Z \setminus X_w}) \to 0.
\]
\end{lem}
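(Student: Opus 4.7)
The plan is to extract the short exact sequence from the open--closed decomposition of $Z$ into the open stratum $X_w$ and its closed complement $Z \setminus X_w$. Factoring $j_w$ as $X_w \overset{j}{\hookrightarrow} Z \overset{i_Z}{\hookrightarrow} X$ and applying $i_{Z*}$ to the standard recollement triangle on $Z$ yields the distinguished triangle
\[
j_{w!}j_w^!\cE_Z \to \cE_Z \to \cE_{Z \setminus X_w} \to,
\]
using $i_Z^! i_{Z*} \simeq \mathrm{id}$ together with $j^! \simeq j^*$ (the latter since $j$ is open) to see that $j_w^!\cE_Z \simeq j_w^*\cE$. The associated long exact sequence of hypercohomology splits into the claimed short exact sequences if and only if every connecting map $H^k(\cE_{Z \setminus X_w}) \to H^{k+1}(j_{w!}j_w^!\cE_Z)$ vanishes, so the lemma reduces to a parity computation.

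The key input is that $H^\bullet(X, j_{w!}j_w^!\cE_Z)$ lives in even degrees. Since $j_{w!}j_w^!\cE_Z$ is supported on the compact (projective) Schubert variety $\bar{X}_w$, one has $H^k(X, j_{w!}j_w^!\cE_Z) \simeq H^k_c(X_w, j_w^*\cE)$. Because $X_w \simeq \mathbb{A}^{\ell(w)}$, the compactly supported cohomology of a constant sheaf on $X_w$ is concentrated in the even degree $2\ell(w)$; combined with the $*$-evenness of $j_w^*\cE$ (whose cohomology sheaves vanish in odd degrees) and the hypercohomology spectral sequence, this forces $H^k_c(X_w, j_w^*\cE) = 0$ for $k$ odd.

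To finish, I would argue by induction on the number of Bruhat cells in $Z$, proving simultaneously the lemma and the auxiliary statement that $H^\bullet(\cE_{Z'})$ is concentrated in even degrees for every finite closed union of Schubert varieties $Z'$ (where $\cE$ remains $*$-even after restriction). The base case of a single point is immediate, and the inductive step peels off an open Bruhat cell of $Z'$ via the triangle constructed above: the two outer terms of the resulting long exact sequence have vanishing odd-degree hypercohomology by the inductive hypothesis together with the parity computation just performed, which forces the middle term to have even hypercohomology and every connecting map to vanish. The main obstacle is really just the bookkeeping of this combined induction and verifying the identifications $j_w^!\cE_Z \simeq j_w^*\cE$ and $H^k(X, j_{w!}j_w^!\cE_Z) \simeq H^k_c(X_w, j_w^*\cE)$; once those are in hand, the parity argument is routine.
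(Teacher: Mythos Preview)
Your proof is correct and follows the same structural approach as the paper: both use the distinguished triangle coming from the open--closed decomposition of $Z$ and reduce the lemma to the fact that the hypercohomology of a $*$-even object supported on a finite union of Schubert varieties vanishes in odd degrees. The only difference is in how that parity vanishing is justified: the paper obtains it in one line by observing that the constant map $Z \to \{\pt\}$ is a proper, even morphism in the sense of~\cite[Definition~2.33]{jmw} and invoking~\cite[Proposition~2.34]{jmw} (which immediately gives odd-degree vanishing for all three terms of the triangle), whereas you reprove this special case by hand via a cell-by-cell induction using $H^\bullet_c(\mathbb{A}^{\ell(w)},\ubk)$. Your route is more self-contained but longer; the paper's is shorter because the needed induction has already been packaged into the cited result.
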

\begin{proof}
The constant map $a: Z \to \pt$ is a proper, even morphism in the sense of~\cite[Definition~2.33]{jmw}, so by~\cite[Proposition~2.34]{jmw}, if $\cE' \in \Dbb(Z)$ is $*$-even, then $H^k(\cE')$ vanishes when $k$ is odd. All three terms in the distinguished triangle $j_{w!}j_w^! \cE_Z \to \cE_Z \to \cE_{Z \setminus X_w}\to$ are $*$-even, so in the long exact sequence in cohomology, all odd terms vanish, and the even terms give short exact sequences as above.
\end{proof}
 
\begin{lem}
Let $Z \subset X$ be a finite union of Schubert varieties, and let $X_w \subset Z$ be a Bruhat cell that is open in $Z$.  If $\cE$ is a parity object, then the natural map $H^k(\cE_Z) \to H^k(j_w^*\cE_Z)$ is surjective.
\end{lem}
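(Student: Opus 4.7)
The plan is to proceed by induction on the number of Schubert cells in $Z$. The base case $|Z|=1$ forces $Z=X_w$ to be a single point, and the map in question is the identity.

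For the inductive step, I first handle the case in which there is some Bruhat cell $X_u\subset Z$, with $u\neq w$, that is open in $Z$. Setting $\tilde Z=Z\setminus X_u$, which is again a closed union of Schubert varieties with one fewer cell than $Z$ and still contains $X_w$ as an open cell, I invoke the preceding lemma with $X_u$ in place of $X_w$: this gives a short exact sequence
\[
0\to H^k(j_{u!}j_u^!\cE_Z)\to H^k(\cE_Z)\to H^k(\cE_{\tilde Z})\to 0,
\]
from which $H^k(\cE_Z)\twoheadrightarrow H^k(\cE_{\tilde Z})$. Combining this surjection with the induction hypothesis applied to $\tilde Z$, which furnishes surjectivity of $H^k(\cE_{\tilde Z})\twoheadrightarrow H^k(j_w^*\cE_{\tilde Z})=H^k(j_w^*\cE_Z)$, yields the claimed surjectivity for $Z$.

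The main obstacle is the remaining case, in which $X_w$ is the unique open cell of $Z$, so that $Z=\bar X_w$. Here I would use the distinguished triangle
\[
b_*b^!(\cE|_Z)\longrightarrow \cE|_Z\longrightarrow a_*a^*(\cE|_Z)\longrightarrow
\]
on $Z$, with $a\colon X_w\hookrightarrow Z$ the open inclusion and $b\colon Z'=\partial\bar X_w\hookrightarrow Z$ the closed inclusion; pushing forward to $X$ and applying $R\Gamma$ yields the long exact sequence
\[
\cdots\to H^k(Z',b^!(\cE|_Z))\to H^k(\cE_Z)\to H^k(j_w^*\cE_Z)\to H^{k+1}(Z',b^!(\cE|_Z))\to\cdots.
\]
Without loss of generality $\cE$ is $*$-even; then $H^k(\cE_Z)$ and $H^k(j_w^*\cE_Z)$ vanish in odd degrees for the same reasons as in the proof of the preceding lemma (applying \cite[Proposition~2.34]{jmw} to the proper even map $Z\to\mathrm{pt}$ for the former, and using contractibility of $X_w\cong\mathbb{A}^{\ell(w)}$ for the latter), so surjectivity comes down to showing that $H^*(Z',b^!(\cE|_Z))$ is concentrated in even degrees. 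This last parity statement is a genuine piece of Schubert geometry, and is where the Fiebig--Williamson-style argument alluded to before the statement of Theorem~\ref{thm:ginzburg} replaces Ginzburg's original mixed Hodge module computation; concretely one iterates the preceding lemma over the cells of $\partial\bar X_w$ to show $H^*(\cE_{Z'})$ is concentrated in even degrees, and combines this with the auxiliary triangle $b^!a_!(\cE|_{X_w})\to b^!(\cE|_Z)\to\cE|_{Z'}\to$ obtained by applying $b^!$ to the other open-closed triangle on $Z$ to control the parity of the remaining outer term.
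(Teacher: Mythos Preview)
Your inductive reduction in the first case is fine, but it only strips away irrelevant components; all the content lies in the case $Z=\bar X_w$, and there your argument has a genuine gap.

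You correctly observe that surjectivity of $H^k(\cE_Z)\to H^k(j_w^*\cE_Z)$ would follow from knowing that $H^*(Z',b^!(\cE|_Z))$ is concentrated in even degrees. The problem is the route you propose to this parity statement. In your auxiliary triangle
\[
b^!a_!(\cE|_{X_w}) \to b^!(\cE|_Z) \to \cE|_{Z'} \to,
\]
the outer term $H^*(Z',b^!a_!(\cE|_{X_w}))$ is \emph{not} concentrated in even degrees, so the triangle cannot force the middle term to be even. Already for $X=\mathbb{P}^1$, $Z=\bar X_w=\mathbb{P}^1$, $X_w=\mathbb{A}^1$, $Z'=\{\infty\}$, and $\cE=\ubk_X$, one computes from the triangle $b_*b^!a_!\ubk_{\mathbb{A}^1}\to a_!\ubk_{\mathbb{A}^1}\to a_*\ubk_{\mathbb{A}^1}\to$ that $H^*(Z',b^!a_!\ubk_{\mathbb{A}^1})$ is $\bk$ in degree~$1$ and $\bk$ in degree~$2$. (The formula $b^!\simeq b^*[-2]$ you may have in mind holds only for complexes that are locally constant near $Z'$, which $a_!\ubk_{X_w}$ is not.) So ``controlling the parity of the remaining outer term'' is exactly what fails, and the argument as written is circular: it reduces the desired parity statement to another one of the same strength.

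The paper does not attempt a cell-by-cell induction here. Instead it invokes the Fiebig--Williamson result \cite[Theorem~5.7(2)]{fw:psmg} as a black box to get surjectivity of the restriction-to-a-point map $H^k(\cE)\to H^k(k_w^*\cE)$, where $k_w:\{e_w\}\hookrightarrow X$. One then notes that $H^k(j_w^*\cE)\to H^k(k_w^*\cE)$ is an isomorphism (since $X_w$ is an affine space and $j_w^*\cE$ is a sum of shifted constant sheaves), so the composite $H^k(\cE)\to H^k(\cE_Z)\to H^k(j_w^*\cE_Z)$ is surjective, and hence so is the second map. The Fiebig--Williamson input is genuinely needed at this point and is not recovered by the triangle manipulation you sketch; this is precisely the step that replaces Ginzburg's Hodge-theoretic argument.
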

\begin{proof}
Let $k_w: \{e_w\} \to X$ be the inclusion map.  Since $k_w$ factors through $j_w$, and $j_w$ factors through $i_Z$, there is a natural sequence of maps
\[
\cE \to i_{Z*}i_Z^*\cE \to j_{w*}j_w^*\cE \to k_{w*}k_w^*\cE.
\]
Taking cohomology, we obtain a natural sequence of maps
\begin{equation}\label{eqn:par-stalk-surj}
H^k(\cE) \to H^k(\cE_Z) \to H^k(j_w^*\cE) \to H^k(k_w^*\cE).
\end{equation}
We claim first that the composition $H^k(\cE) \to H^k(k_w^*\cE)$ is surjective.  This is essentially the content of~\cite[Theorem~5.7(2)]{fw:psmg}.  That result is stated in an abstract, axiomatic setting, but~\cite[Proposition~7.1]{fw:psmg} tells us that it applies to Schubert varieties.  Another concern is that~\cite[Theorem~5.7(2)]{fw:psmg} deals with $\cT$-equivariant rather than ordinary cohomology.  The reader may check that the proof goes through with ordinary cohomology as well.  Alternatively, note that both $H^\bullet_\cT(\cE)$ and $H^\bullet_\cT(k_w^*\cE)$ are free modules over the equivariant cohomology ring of a point $H^\bullet_\cT(\pt)$ by~\cite[Proposition~5.6]{fw:psmg}.  In that situation, the ordinary cohomology is obtained from the equivariant cohomology by applying the right-exact functor $\bk \otimes_{H^\bullet_{\cT}(\pt)} {-}$.  In particular, the surjectivity of $H^\bullet_\cT(\cE) \to H^\bullet_\cT(k_w^*\cE)$ implies the surjectivity of the corresponding map in ordinary cohomology.

Next, we claim that the third map in the sequence~\eqref{eqn:par-stalk-surj} is an isomorphism.  Since $j_w^*\cE$ lies in the triangulated subcategory of $\Db(X_w)$ generated by the constant sheaf $\ubk_{X_w}$, it suffices to check that $H^k(\ubk_{X_w}) \to H^k(\ubk_{\{e_w\}})$ is an isomorphism.  That last claim is obvious.

From these observations, it follows that $H^k(\cE_Z) \to H^k(j_w^*\cE) \cong H^k(j_w^*\cE_Z)$ is surjective as well.
\end{proof}

\begin{lem}\label{lem:reldual-opp}
There is a canonical isomorphism $\bar \jmath_w^!\ubk_X \cong \ubk_{X^-_w}[-2\ell(w)]$.
\end{lem}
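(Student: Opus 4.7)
The plan is to exhibit $\bar\jmath_w$ locally as the base change of the inclusion of a point into an affine space of dimension $\ell(w)$, and then invoke the standard shriek-pullback formula $i_0^!\ubk_{\mathbb{A}^{\ell(w)}} \cong \ubk_{\{0\}}[-2\ell(w)]$ for the inclusion of a point into affine space.

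First I would reduce the problem to a neighborhood of the base point $e_w$. Since $X_w^- = \cB^-\cdot e_w$ is a homogeneous $\cB^-$-space, both $\bar\jmath_w^!\ubk_X$ and $\ubk_{X^-_w}[-2\ell(w)]$ carry canonical $\cB^-$-equivariant structures, and a $\cB^-$-equivariant (locally constant) sheaf on $X_w^-$ is determined by its stalk at $e_w$ together with its equivariance data. It therefore suffices to construct the isomorphism canonically on some open neighborhood of $e_w$ in $X_w^-$ and then translate by $\cB^-$.

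Next I would set up the local product structure. Let $\cU_w\subset \cU$ be the subgroup corresponding to the positive roots sent to negative roots by $w^{-1}$, so that $\cU_w\cong \mathbb{A}^{\ell(w)}$ and the orbit map $\cU_w \to X_w$, $u\mapsto u\cdot e_w$, is an isomorphism of varieties. Using the Bruhat-type decomposition at $e_w$ available in the Kac--Moody setting (a consequence of the results surrounding \cite[Lemma~7.3.10]{kum:kmg}), the multiplication map yields an open immersion
\[
\mu: X_w^{-,\circ}\times \cU_w \hookrightarrow X, \qquad (b,u)\mapsto bu\cdot e_w,
\]
where $X_w^{-,\circ}$ is an open $\cB^-$-invariant neighborhood of $e_w$ in $X_w^-$. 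Writing $U$ for the image of $\mu$, this exhibits $U$ as a product with $X_w^-\cap U = X_w^{-,\circ}\times\{e_w\}$. Under the product decomposition, the inclusion $X_w^-\cap U\hookrightarrow U$ is identified with $\id_{X_w^{-,\circ}}\times i_0$, where $i_0:\{e_w\}\hookrightarrow \cU_w\cong \mathbb{A}^{\ell(w)}$. The K\"unneth formula and the standard identification $i_0^!\ubk_{\mathbb{A}^{\ell(w)}}\cong \ubk[-2\ell(w)]$ yield a canonical isomorphism $\bar\jmath_w^!\ubk_X|_{X_w^{-,\circ}}\cong \ubk_{X_w^{-,\circ}}[-2\ell(w)]$. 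Applying $\cB^-$-equivariance to spread this over $X_w^-$ completes the argument.

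The main obstacle is making the local product decomposition precise and canonical in the ind-variety setting: one needs an open immersion of the shape $\mu$ together with the verification that it is genuinely $\cU_w$-equivariant in the second factor (so that the identification with $\id\times i_0$ is canonical, not merely existential). Once that local geometric input is in place, the remainder of the argument is formal sheaf theory.
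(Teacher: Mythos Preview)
Your approach is essentially that of the paper: both use the product decomposition coming from \cite[Lemma~7.3.10]{kum:kmg} together with K\"unneth and the standard identification $i_0^!\ubk_{\mathbb{A}^{\ell(w)}} \cong \ubk[-2\ell(w)]$. The paper's version is more direct, however, because Kumar's lemma already furnishes the product structure over the \emph{entire} opposite cell---the action map $\cU_w \times X_w^- \to X$, $(u,x)\mapsto u\cdot x$, is an isomorphism onto an open subset $O_w\subset X$---so your reduction to a neighborhood and subsequent spreading by $\cB^-$-equivariance is unnecessary (and in fact vacuous: since $X_w^-$ is a single $\cB^-$-orbit, any nonempty $\cB^-$-invariant open subset $X_w^{-,\circ}$ is already all of $X_w^-$). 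As a minor point, your formula $(b,u)\mapsto bu\cdot e_w$ does not typecheck, since $b\in X_w^-$ is a point of $X$ rather than a group element; the paper's $(u,x)\mapsto u\cdot x$ avoids this.
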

\begin{proof}
Let $\cU \subset \cB$ be the pro-unipotent radical of the Borel subgroup.  For $w \in W^\cP$, let $\cU_w \subset \cU$ be the subgroup generated by the root subgroups $\cU_\alpha$ where $\alpha$ is a positive root but $w^{-1}\alpha$ is negative.  Then $\cU_w$ is a finite-dimensional unipotent algebraic group.  Let $O_w = \cU_w \cdot X^-_w \subset X$.  According to~\cite[Lemma~7.3.10]{kum:kmg}, the multiplication map
\[
\cU_w \times X^-_w \to O_w
\]
is an isomorphism of ind-varieties.  Let $i: X^-_w \to \cU_w \times X^-_w$ be the inclusion map $x \mapsto (e,x)$, where $e \in \cU_w$ is the identity element.  Since $O_w$ is open in $X$, $\bar\jmath_w^!\ubk_X$ is naturally isomorphic to $i^!\ubk_{\cU_w \times X^-_w}$.  

To compute $i^!\ubk_{\cU_w \times X^-_w}$, let us consider the Cartesian square
\[
\begin{tikzpicture}[description/.style={fill=white,inner sep=1.5pt}] 
\matrix (m) [matrix of math nodes, row sep=2.5em,
column sep=2.5em, text height=1ex, text depth=0.25ex, nodes in empty cells]
{ X^-_w & \cU_w \times X^-_w \\
  \{e\} & \cU_w \\};
\path[->,>=angle 90, font=\scriptsize]
(m-1-1) edge node[above]{$i$} (m-1-2)
edge node[left]{$a$} (m-2-1)
(m-1-2) edge node[right]{$q$} (m-2-2)
(m-2-1) edge node[above]{$i_0$} (m-2-2);
\end{tikzpicture}
\]
where $i_0$ is the inclusion map, and $a$ and $q$ are the obvious projection maps.  The space $X^-_w$ is contractible, and each cohomology sheaf $\cH^k(i^!\ubk_{\cU_w \times X^-_w})$ is $\cB^-$-equivariant, so in fact each such cohomology sheaf must be a constant sheaf.  It follows from, say,~\cite[Corollary~2.7.7]{ks} that the adjunction map
\[
a^*a_*i^!\ubk_{\cU_w \times X^-_w} \to i^!\ubk_{\cU_w \times X^-_w}
\]
is an isomorphism.  Thus, to finish the proof, we must show that $a_*i^!\ubk_{\cU_w \times X^-_w} \cong \ubk_{\{e\}}[-2\ell(w)]$.  Using Lemma~\ref{lem:basechange} and~\cite[Corollary~2.7.7]{ks} again, we find that
\[
a_*i^!\ubk_{\cU_w \times X^-_w} \cong i_0^!q_*\ubk_{\cU_w \times X^-_w}
\cong i_0^!q_*q^* \ubk_{\cU_w} \cong i_0^!\ubk_{\cU_w}.
\]
Since $\cU_w$ is isomorphic as a variety to an affine space $\mathbb{A}^{\ell(w)}$, we have a well-known canonical isomorphism $i_0^!\ubk_{\cU_w} \cong \ubk_{\{e\}}[-2\ell(w)]$, and the result follows.
\end{proof}

Now, let $Y \subset X$ be a finite union of Birkhoff varieties, and let
\[
\Lambda_Y = i_Y^!\ubk_X.
\]

\begin{lem}\label{lem:reldual-bound}
Let $d_Y = \min \{ \ell(w) \mid X^-_w \subset Y\}$.  Then the cohomology sheaves $\cH^i(\Lambda_Y)$ vanish for $i < 2d_Y$.
\end{lem}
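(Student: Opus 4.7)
The plan is to prove the vanishing stalk-by-stalk, reducing at each point to a finite inductive argument. Fix $p \in Y$ with $p \in X^-_v$. Only cells $X^-_w$ with $w \leq v$ meet a sufficiently small neighborhood of $p$, and since the Bruhat interval below $v$ is finite, the set $\tilde S_p = \{w \in S : w \leq v\}$ is finite, where $S = \{w \in W^\cP : X^-_w \subset Y\}$. Setting $d_p = \min\{\ell(w) : w \in \tilde S_p\} \geq d_Y$, it suffices to prove $\cH^i(\Lambda_Y)_p = 0$ for $i < 2 d_p$.

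I would induct on $|\tilde S_p|$. The base case $|\tilde S_p| = 1$ forces $\tilde S_p = \{v\}$, so locally near $p$ the variety $Y$ is just the cell $X^-_v$, and Lemma~\ref{lem:reldual-opp} identifies $\Lambda_Y$ with $\ubk[-2\ell(v)]$, which is concentrated in degree $2\ell(v) = 2d_p$. For the inductive step, choose $w_0 \in \tilde S_p$ of minimal length; minimum length forces $w_0$ to be Bruhat-minimal in $\tilde S_p$, whence $X^-_{w_0}$ is locally open in $Y$ near $p$. With $j$ the open inclusion of $X^-_{w_0}$ into $Y$ and $i$ the inclusion of the closed complement $Y' = Y \setminus X^-_{w_0}$, the distinguished triangle
$$i_* i^! \Lambda_Y \to \Lambda_Y \to j_* j^* \Lambda_Y \xrightarrow{+1},$$
together with the identifications $j^* \Lambda_Y \cong \ubk_{X^-_{w_0}}[-2d_p]$ (from Lemma~\ref{lem:reldual-opp} and transitivity of upper shriek) and $i^! \Lambda_Y \cong \Lambda_{Y'}$, reduces the stalk vanishing at $p$ to controlling the two outer terms. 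Since $j_*$ of a sheaf has cohomology in nonnegative degrees, $\cH^i(j_* j^* \Lambda_Y) = 0$ for $i < 2d_p$; and the stalk at $p$ of $i_* \Lambda_{Y'}$ is $0$ if $p \notin Y'$ and otherwise satisfies the required bound by the inductive hypothesis, since the analogous local set for $Y'$ at $p$ is $\tilde S_p \setminus \{w_0\}$, strictly smaller with local minimum length still $\geq d_p$.

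The main subtlety is the topological bookkeeping needed to verify that the minimum-length cell is locally open in $Y$. This reduces to the fact that Bruhat order refines length, so minimum length implies Bruhat-minimality in $\tilde S_p$; once one has this, local openness follows by checking that the complement's closure does not pick up $X^-_{w_0}$. With that in hand, the rest is a routine long-exact-sequence chase on stalks.
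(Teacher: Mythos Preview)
Your argument is correct and follows essentially the same strategy as the paper: peel off one opposite Bruhat cell at a time using the open--closed distinguished triangle, identify the contribution of the single cell via Lemma~\ref{lem:reldual-opp}, and invoke induction on the rest. The only cosmetic difference is that the paper builds $Y$ up by an increasing sequence of open subsets $B_k$ (adding a cell that is closed in $B_k$ at each stage), whereas you localize at a point first and then tear $Y$ down by removing a cell that is open; both directions use the same triangle and the same left $t$-exactness of $Rj_*$, so nothing new is happening. One small remark: your minimum-length $w_0$ in $\tilde S_p$ is in fact Bruhat-minimal in all of $S$ (any $w' < w_0$ in $S$ would satisfy $w' < w_0 \le v$, hence lie in $\tilde S_p$ with smaller length), so $X^-_{w_0}$ is globally open in $Y$ and you need not insist on working ``locally near $p$'' for that step.
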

\begin{proof}
Let $X^-_w \subset Y$ be such that $\ell(w) = d_Y$.  Then $X^-_w$ is necessarily open in $Y$.  Let $A_1 = X^-_w$, and then inductively define $A_2, A_3, \ldots$ by setting $A_k$ to be some opposite Bruhat cell $X^-_y$ that is open in $Y \setminus (A_1 \cup \cdots \cup A_{k-1})$.  We also let $B_k = A_1 \cup \cdots \cup A_k$.  The $B_k$'s form an increasing sequence of open subsets of $Y$ whose union is all of $Y$.  To show that $\cH^i(\Lambda_Y)$ vanishes for $i < 2d_Y$, it suffices to show that for all $k$,
\begin{equation}\label{eqn:reldual}
\cH^i(\Lambda_Y|_{B_k}) = 0 \qquad\text{for $i < 2d_Y$.}
\end{equation}
We proceed by induction on $k$.  For $k = 1$, we have $\Lambda_Y|_{B_1} \cong \bar\jmath_w^!\ubk_X$, so~\eqref{eqn:reldual} follows from Lemma~\ref{lem:reldual-opp}.  For $k > 1$, let $a: A_k \to B_k$ and $b: B_{k-1} \to B_k$ be the inclusion maps.  We have a distinguished triangle
\[
a_*a^!(\Lambda_Y|_{B_k}) \to \Lambda_Y|_{B_k} \to b_*b^*(\Lambda|_{B_k}) \to.
\]
If $A_k = X^-_y$, the first term can be identified with $a_*\bar\jmath_y^!\ubk_X$.  Since $\ell(y) \ge d_Y$, it follows from Lemma~\ref{lem:reldual-opp} again that $\cH^i(a_*a^!(\Lambda_Y|_{B_k})) = 0$ for $i < 2d_Y$.  On the other hand, $b^*(\Lambda|_{B_k}) \cong \Lambda|_{B_{k-1}}$, so $\cH^i(b_*b^*(\Lambda_Y|_{B_k}))$ vanishes for $i < 2d_Y$ by induction.  Thus,~\eqref{eqn:reldual} holds, as desired.
\end{proof}

\begin{lem}
For any $w \in W^\cP$, there is a canonical morphism $q_w: \ubk_{\bar X^-_w} \to \Lambda_{\bar X^-_w}[2\ell(w)]$ whose restriction to $X^-_w \subset \bar X^-_w$ is an isomorphism. 
\end{lem}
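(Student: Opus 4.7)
The plan is to construct $q_w$ by induction along the open exhaustion $B_1 \subset B_2 \subset \cdots$ of $\bar X^-_w$ introduced in the proof of Lemma~\ref{lem:reldual-bound}. Recall that $B_k = A_1 \cup \cdots \cup A_k$, where $A_1 = X^-_w$ and each $A_k = X^-_{v_k}$ for $k \geq 2$ is an opposite Bruhat cell with $v_k > w$, and $\bar X^-_w = \bigcup_k B_k$. The base case is immediate: on $B_1 = X^-_w$, Lemma~\ref{lem:reldual-opp} furnishes a canonical isomorphism $\Lambda_{\bar X^-_w}[2\ell(w)]|_{B_1} \simto \ubk_{X^-_w}$, which serves as the initial morphism for the induction.

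For the inductive step, I would let $a\colon A_k \hookrightarrow B_k$ and $b\colon B_{k-1} \hookrightarrow B_k$ denote the closed and open inclusions, and apply $\Hom_{\Db(B_k)}(\ubk_{B_k}, {-})$ to the distinguished triangle
\[
a_*a^!\bigl(\Lambda_{\bar X^-_w}[2\ell(w)]|_{B_k}\bigr) \to \Lambda_{\bar X^-_w}[2\ell(w)]|_{B_k} \to b_*b^*\bigl(\Lambda_{\bar X^-_w}[2\ell(w)]|_{B_k}\bigr) \to.
\]
The resulting long exact sequence exhibits the restriction map $\Hom(\ubk_{B_k}, \Lambda_{\bar X^-_w}[2\ell(w)]|_{B_k}) \to \Hom(\ubk_{B_{k-1}}, \Lambda_{\bar X^-_w}[2\ell(w)]|_{B_{k-1}})$ with its kernel and cokernel controlled by $\Hom(\ubk_{A_k}, a^!(\Lambda_{\bar X^-_w}[2\ell(w)]|_{B_k})[j])$ for $j = 0, 1$. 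The key calculation is that the composite inclusion $A_k \hookrightarrow B_k \hookrightarrow \bar X^-_w \hookrightarrow X$ is exactly $\bar\jmath_{v_k}$, so Lemma~\ref{lem:reldual-opp} yields
\[
a^!\bigl(\Lambda_{\bar X^-_w}[2\ell(w)]|_{B_k}\bigr) \cong \ubk_{A_k}[-2(\ell(v_k) - \ell(w))].
\]
Since $\ell(v_k) > \ell(w)$ for every $k \geq 2$, this complex is concentrated in cohomological degree $2(\ell(v_k) - \ell(w)) \geq 2$, and consequently the two relevant Hom-groups compute cohomology of $A_k$ in strictly negative degrees and vanish automatically.

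The vanishing gives both the existence and the uniqueness of a lift from $B_{k-1}$ to $B_k$, so the canonical base morphism on $X^-_w$ propagates uniquely to a canonical morphism $\ubk_{B_k} \to \Lambda_{\bar X^-_w}[2\ell(w)]|_{B_k}$ at each stage. Assembling these compatible lifts along $\bar X^-_w = \bigcup_k B_k$ produces $q_w$, whose restriction to $X^-_w = B_1$ is the canonical isomorphism of Lemma~\ref{lem:reldual-opp} by construction. The cohomological input is routine once $a^!$ has been identified; the only subtle point I foresee is the passage from the finite-stage maps on each $B_k$ to a single morphism on $\bar X^-_w$ in the Kac--Moody setting, where the exhaustion may be infinite. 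This should follow formally from the convention that morphisms in $\Dbb(\bar X^-_w)$ are specified by compatible data on each finite-stratum closed approximation, and it is the only bookkeeping step beyond formal six-functor yoga.
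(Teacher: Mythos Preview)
Your argument is essentially correct but takes a more laborious route than the paper's.  The paper does not exhaust $\bar X^-_w$ stratum by stratum; instead it uses a single open--closed triangle on $\bar X^-_w$, with $u: X^-_w \hookrightarrow \bar X^-_w$ open and $y: Y = \bar X^-_w \setminus X^-_w \hookrightarrow \bar X^-_w$ closed.  From Lemma~\ref{lem:reldual-opp} and adjunction one gets a canonical map $\ubk_{\bar X^-_w} \to u_*u^*\Lambda_{\bar X^-_w}[2\ell(w)]$, and Lemma~\ref{lem:reldual-bound} (applied once, to $Y$, where $d_Y > \ell(w)$) gives the two vanishings $\Hom(\ubk_{\bar X^-_w}, y_!\Lambda_Y[2\ell(w)+j]) = 0$ for $j=0,1$ needed to lift this map uniquely through $\Lambda_{\bar X^-_w}[2\ell(w)]$.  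Your stage-by-stage extension is in effect re-deriving Lemma~\ref{lem:reldual-bound} inside the proof, which is redundant.

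There is also a small confusion in your last paragraph.  The $B_k$ are \emph{open} in $\bar X^-_w$, and the objects $\ubk_{\bar X^-_w}$, $\Lambda_{\bar X^-_w}$ are not $\cB$-constructible, so the ``convention'' about morphisms being determined on finite closed approximations does not apply here.  What you actually need is that the natural map
\[
\Hom(\ubk_{\bar X^-_w}, \Lambda_{\bar X^-_w}[2\ell(w)]) \longrightarrow \varprojlim_k \Hom(\ubk_{B_k}, \Lambda_{\bar X^-_w}[2\ell(w)]|_{B_k})
\]
is an isomorphism.  Since your vanishing shows that every transition map in the tower on the right is an isomorphism, the Mittag--Leffler condition holds and the $\varprojlim^1$ obstruction vanishes, so this is fine.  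Thus your proof can be completed, but the paper's one-triangle argument is shorter and avoids the limit step altogether.
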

\begin{proof}
Let $Y = \bar X^-_w \setminus X^-_w$, and let $y: Y \to \bar X^-_w$ and $u: X^-_w \to \bar X^-_w$ be the inclusion maps.  The space $Y$ is a finite union of Birkhoff varieties, and, moreover, the integer $d_Y$ defined in Lemma~\ref{lem:reldual-bound} satisfies $d_Y > \ell(w)$.  Consider the distinguished triangle
\[
y_!\Lambda_Y \to \Lambda_{\bar X^-_w} \to u_*u^*\Lambda_{\bar X^-_w} \to.
\]
Note that $u^*\Lambda_{\bar X^-_w} \cong \bar\jmath_w^! \ubk_X$.  It follows from Lemma~\ref{lem:reldual-opp} by adjunction that we have a canonical morphism
\begin{equation}\label{eqn:reldual-can}
\ubk_{\bar X^-_w} \to u_*u^*\Lambda_{\bar X^-_w}[2\ell(w)].
\end{equation}
By construction, this map restricts to an isomorphism over $X^-_w$.
Next, we deduce from Lemma~\ref{lem:reldual-bound} that
\[
\Hom(\ubk_{\bar X^-_w}, y_!\Lambda_Y[2 \ell(w)]) = \Hom(\ubk_{\bar X^-_w}, y_!\Lambda_Y[2\ell(w)+1]) = 0.
\]
These facts imply that the map in~\eqref{eqn:reldual-can} factors in a unique way through $\Lambda_{\bar X^-_w}$.  The resulting map $\ubk_{\bar X^-_w} \to \Lambda_{\bar X^-_w}[2\ell(w)]$ is the one we seek.
\end{proof}
\begin{rmk} The map $q_w: \ubk_{\bar X^-_w} \to \Lambda_{\bar X^-_w}[2\ell(w)]$ plays a role similar to that of the fundamental class in Borel--Moore homology.
\end{rmk}

Define $c_w: \ubk_X \to \ubk_X[2\ell(w)]$ to be the composition
\begin{equation}\label{eqn:cw-defn}
\ubk_X \to \bar\imath_{w*} \ubk_{\bar X^-_w} \xrightarrow{\bar\imath_{w*}q_w} \bar\imath_{w*}\Lambda_{\bar X^-_w}[2\ell(w)] \to \ubk_X[2\ell(w)].
\end{equation}
We now study the map $\bar c_w: \Hom^k(\ubk_X, \cE_Z) \to \Hom^{k+2\ell(w)}(\ubk_X,\cE_Z)$ induced by $c_w$.

\begin{lem}\label{lem:ginz-comm-E}
Let $Z$ be a finite union of Schubert varieties, let $X_w \subset Z$ be a Bruhat cell that is open in $Z$, and let $\cE \in \ParityB(X)$. Then $\bar c_w$ induces an isomorphism $H^k(j_w^*\cE_Z) \simto H^{k+2\ell(w)}(j_{w!}j_w^!\cE_Z)$ that makes the following diagram commute:
\begin{center}
\begin{tikzpicture}[description/.style={fill=white,inner sep=1.5pt}] 

\matrix (m) [matrix of math nodes, row sep=2.5em,
column sep=1em, text height=1ex, text depth=0.25ex, nodes in empty cells]
{ & & H^k(\cE_Z) & H^k(j_w^*\cE_Z) & \\
  0 & H^{k+2\ell(w)}(\cE_{Z \setminus X_w}) & H^{k+2\ell(w)}(\cE_Z) & H^{k + 2\ell(w)}(j_{w!}j_w^!\cE_Z) & 0\\ };
\path[->, >=angle 90, font=\scriptsize]

(m-1-3) edge node[right] {$\bar c_w$} (m-2-3)
(m-1-4) edge node[auto] {$\bar c_w$} node[sloped, below] {$\sim$} (m-2-4)
(m-2-2) edge (m-2-1)
(m-2-5) edge (m-2-4);

\path[->>, >=angle 90, font=\scriptsize]
(m-1-3) edge (m-1-4)
(m-2-3) edge (m-2-2);

\path[left hook->, >=angle 90, font=\scriptsize]
(m-2-4) edge (m-2-3);

\end{tikzpicture}
\end{center}
\end{lem}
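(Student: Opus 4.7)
The plan is to exploit the naturality of $c_w$: the morphism $c_w: \ubk_X \to \ubk_X[2\ell(w)]$ of \eqref{eqn:cw-defn} induces, by tensoring, a natural transformation $\mathrm{id}\to [2\ell(w)]$ of endofunctors on $\Dbb(X)$, and each $\bar c_w$ appearing in the diagram is the corresponding map on cohomology of the relevant complex. I would first apply this natural transformation to the distinguished triangle
$$j_{w!}j_w^!\cE_Z \to \cE_Z \to \cE_{Z\smallsetminus X_w} \to$$
underlying the first lemma's short exact sequence; naturality produces a morphism of short exact sequences between degrees $k$ and $k+2\ell(w)$, whose vertical maps are $\bar c_w$ applied to $H^\bullet(j_{w!}j_w^!\cE_Z)$, to $H^\bullet(\cE_Z)$ (the middle vertical of the lemma's diagram), and to $H^\bullet(\cE_{Z\smallsetminus X_w})$, respectively.

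The heart of the proof is then to identify an appropriate composition with the right vertical of the lemma's diagram, $\bar c_w: H^k(j_w^*\cE_Z) \to H^{k+2\ell(w)}(j_{w!}j_w^!\cE_Z)$, and to show that it is an isomorphism. Since $X_w$ is open in $Z$, there is a canonical identification $j_w^!\cE_Z\cong j_w^*\cE_Z$, so the right vertical becomes a natural transformation $H^k(X_w,\cF)\to H^{k+2\ell(w)}_c(X_w,\cF)$ with $\cF=j_w^*\cE$. To compute it explicitly, I would restrict the factorization \eqref{eqn:cw-defn} of $c_w$ to $X_w$: by proper base change applied to the Cartesian square coming from the transverse intersection $X_w\cap\bar X^-_w=\{e_w\}$, the sheaf $j_w^*(\bar\imath_{w*}\ubk_{\bar X^-_w})$ is identified with the skyscraper $\ubk_{\{e_w\}}$ on $X_w$; moreover, a direct application of Lemma~\ref{lem:reldual-opp} at the single point $e_w$ identifies $j_w^*(\bar\imath_{w*}q_w)$ with the Thom isomorphism for the pair $\{e_w\}\subset X_w \cong \mathbb{A}^{\ell(w)}$. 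Cup-product with the resulting Thom class gives precisely the desired natural isomorphism.

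Commutativity of the right square of the lemma then follows from the compatibility of this transverse-base-change identification with the counit $j_{w!}j_w^!\cE_Z\to \cE_Z$ and unit $\cE_Z\to j_{w*}j_w^*\cE_Z$, which is once again naturality for $c_w$. The main obstacle is the careful bookkeeping in the base-change step, and in particular the verification that $q_w$ restricts at $e_w$ to the standard generator of the Thom class (rather than to zero or to some non-unit multiple); once this is settled, both the well-definedness and the isomorphism property of the right vertical, as well as the commutativity, reduce to the naturality of $c_w$.
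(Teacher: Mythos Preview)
Your approach contains the right computational ingredient (identifying the restriction of $c_w$ to $X_w$ with a Thom-type map through the point $e_w$), but the commutativity argument has a gap.

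The square in the lemma is \emph{not} a naturality square for the transformation $c_w\otimes(-)$: its right vertical goes from $H^k(j_w^*\cE_Z)$ to $H^{k+2\ell(w)}(j_{w!}j_w^!\cE_Z)$, i.e.\ between cohomologies of two \emph{different} complexes. Naturality of $c_w$ only produces squares in which the same object appears (shifted) at top and bottom; so your step~1, applying $c_w$ to the distinguished triangle, yields a different diagram and does not by itself give the one in the lemma. To get the stated square to commute you must show that the middle map $\bar c_w: H^k(\cE_Z)\to H^{k+2\ell(w)}(\cE_Z)$ actually \emph{factors} through $H^k(j_w^*\cE_Z)$ on the source side and through $H^{k+2\ell(w)}(j_{w!}j_w^!\cE_Z)$ on the target side. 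This requires a geometric input you never state: $\bar X^-_w \cap (Z\smallsetminus X_w)=\varnothing$. Because $c_w$ is defined to factor through $\bar\imath_{w*}\ubk_{\bar X^-_w}$, this disjointness forces $c_w$ to vanish after restriction to $Y=Z\smallsetminus X_w$, and that is exactly what makes the factoring (and hence the commutativity) work. Your base-change square $\{e_w\}=X_w\cap \bar X^-_w$ lives over $X_w$, not over $Z$; to control $c_w$ on all of $Z$ you need the stronger statement $Z\cap\bar X^-_w=\{e_w\}$, which is the same disjointness fact.

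The paper's proof takes the opposite route from yours: it uses the support argument first to \emph{define} the right vertical as the induced map from the factoring (so commutativity is automatic), and only afterwards checks that this induced map is an isomorphism by restricting to $\bar X_w$ and computing on $\ubk_{X_w}[m]$. Your plan defines the right vertical as the Thom isomorphism (so the isomorphism is automatic) and then tries to verify commutativity; that verification is precisely where the missing support input is needed.
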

\begin{proof}
Let $Y = Z \setminus X_w$, and let $y: Y \to X$ be the inclusion map.  Since $c_w$ factors through an object supported on $\bar X^-_w$, and since $\bar X^-_w \cap Y = \varnothing$, we see that the composition $H^k(y^!\cE_Z) \to H^k(\cE_Z) \overset{\bar c_w}{\to} H^{k+2\ell(w)}(\cE_Z)$ vanishes.  In other words, $\bar c_w$ must factor through $H^k(\cE_Z) \to H^k(j_w^*\cE_Z)$.  The resulting map $H^k(j_w^*\cE_Z) \to H^{k+2\ell(w)}(\cE_Z)$ must factor through $H^{k+2\ell(w)}(j_{w!}j_w^!\cE_Z) \to H^{k+2\ell(w)}(\cE_Z)$ for the same reason, so we at least have a commutative diagram as shown above.

It remains to show that the right-hand vertical map is an isomorphism.  Let $p: \{e_w\} \to X_w$ be the inclusion map.  Applying $i_w^*$ to~\eqref{eqn:cw-defn} yields the composition $\ubk_{\bar X_w} \to j_{w*}p_*\ubk_{e_w} \to \ubk_{\bar X_w}[2\ell(w)]$, where the second map comes from adjunction and the identification $\ubk_{e_w} \cong p^!\ubk_{X_w}[2\ell(w)]$.  Note that $j_{w!}p_*\ubk_{e_w} \cong j_{w*}p_*\ubk_{e_w}$.  Thus, $\bar c_w$ is given by the following composition:
\begin{multline*}
H^k(j_w^*\cE_Z) = \Hom^k(\ubk_{\bar X_w}, j_{w*}j_w^*\cE_Z) \to \Hom^k(j_{w*}p_*\ubk_{e_w}[-2\ell(w)], j_{w*}j_w^*\cE_Z) \\
\cong \Hom^{k+2\ell(w)}(p_*\ubk_{e_w}, j_w^!\cE_Z)
\cong \Hom^{k+2\ell(w)}(j_{w!}p_*\ubk_{e_w}, j_{w!}j_w^!\cE_Z) \\
\to 
\Hom^{k+2\ell(w)}(\ubk_{\bar X_w}, j_{w!}j_w^!\cE_Z) = H^{k+2\ell(w)}(j_{w!}j_w^!\cE_Z).
\end{multline*}
We claim that each step of this is an isomorphism.  If it happens that $j_w^*\cE_Z \cong \ubk_{X_w}[m]$, this can be checked explicitly.  But because $\cE_Z$ is $*$-parity, $j_w^*\cE_Z$ is always isomorphic to a direct sum of various $\ubk_{X_w}[m]$.
\end{proof}

A very similar argument establishes the following result, whose proof we omit.

\begin{lem}\label{lem:ginz-comm-F}
Let $Z$ be a finite union of Schubert varieties, let $X_w \subset Z$ be a Bruhat cell that is open in $Z$, and let $\cF \in \ParityB(X)$. Then $\bar c_w$ induces an isomorphism $H^k(j_w^*\cF_Z) \simto H^{k+2\ell(w)}(j_{w!}j_w^!\cF^Z)$ that makes the following diagram commute:
\begin{center}
 \begin{tikzpicture}[description/.style={fill=white,inner sep=1.5pt}] 

\matrix (m) [matrix of math nodes, row sep=2.5em,
column sep=1em, text height=1ex, text depth=0.25ex, nodes in empty cells]
{ 0 & H^{k}(\cF^{Z \setminus X_w}) & H^{k}(\cF^Z) & H^{k}(j_{w}^*\cF^Z) & 0\\
& & H^{k+2\ell (w)}(\cF^Z) & H^{k+2\ell (w)}(j_{w!}j_w^!\cF^Z) & \\};
\path[->, >=angle 90,  font=\scriptsize]

(m-1-3) edge node[right] {$\bar c_w$} (m-2-3)
(m-1-4) edge node[auto] {$\bar c_w$}  node[below, sloped] {$\sim$} (m-2-4)
(m-1-1) edge (m-1-2)
(m-1-4) edge (m-1-5);

\path[->>, >=angle 90, font=\scriptsize]
(m-1-3) edge (m-1-4);

\path[right hook->, >=angle 90, font=\scriptsize]
(m-1-2) edge (m-1-3);

\path[left hook->, >=angle 90, font=\scriptsize]
(m-2-4) edge (m-2-3);

\end{tikzpicture}\end{center}
\end{lem}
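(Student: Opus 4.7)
The proof is formally dual to that of Lemma~\ref{lem:ginz-comm-E}: swap the roles of $*$ and $!$, and interchange the positions of the short exact sequence and the simple injection. To establish the top row as a short exact sequence, I use the open-closed adjunction triangle for $X_w \hookrightarrow Z$, applied to $i_Z^!\cF$ and pushed forward to $X$, producing the distinguished triangle $\cF^{Z\smallsetminus X_w} \to \cF^Z \to j_{w*}j_w^*\cF^Z \to$. Here one identifies $j_w^*\cF^Z \cong j_w^!\cF$ using that $X_w$ is open in $Z$. Applying Verdier duality to the first lemma of the section, together with the identification $\cF^Z \cong \mathbb{D}((\mathbb{D}\cF)_Z)$, shows that each term has cohomology concentrated in a single parity, so the long exact sequence collapses into short exact sequences of the stated form.

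Next, to verify the commutativity of the diagram, I need two factorization statements: $\bar c_w$ kills the image of $H^k(\cF^{Z\smallsetminus X_w})$, and $\bar c_w$ takes values in the image of $H^{k+2\ell(w)}(j_{w!}j_w^!\cF^Z)$. Both follow from the same support argument used in Lemma~\ref{lem:ginz-comm-E}: by~\eqref{eqn:cw-defn}, $c_w$ factors through $\bar\imath_{w*}\Lambda_{\bar X^-_w}[2\ell(w)]$, which is supported on $\bar X^-_w$, and $\bar X^-_w \cap (Z\smallsetminus X_w) = \varnothing$ (this follows from the standard fact that $\bar X^-_w \cap \bar X_v \neq \varnothing$ iff $w \leq v$, together with the observation that $X_w$ being open in $Z$ forces $w$ to be maximal among the Bruhat indices occurring in $Z$). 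For the second assertion, invoke the complementary triangle $j_{w!}j_w^!\cF^Z \to \cF^Z \to y_*y^*\cF^Z \to$ (with $y\colon Z\smallsetminus X_w \to X$) and note that the third term is supported on $Z\smallsetminus X_w$, so the composition $H^k(\cF^Z) \xrightarrow{\bar c_w} H^{k+2\ell(w)}(\cF^Z) \to H^{k+2\ell(w)}(y_*y^*\cF^Z)$ vanishes.

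Finally, the rightmost vertical map is an isomorphism by the same argument as in the last paragraph of the proof of Lemma~\ref{lem:ginz-comm-E}: express $\bar c_w$ as a chain of natural morphisms involving the inclusion $p\colon \{e_w\} \hookrightarrow X_w$, each step of which is an isomorphism provided $j_w^*\cF^Z \cong j_w^!\cF$ is a direct sum of shifted constant sheaves $\ubk_{X_w}[m]$; this holds because $\cF$ is $!$-parity. The main bookkeeping challenge is tracking $*$- versus $!$-pullbacks through the factorization $j_w = i_Z \circ j_w'$ and aligning the two complementary open-closed triangles so that a single support argument delivers both factorization claims.
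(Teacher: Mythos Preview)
Your proposal is correct and matches the paper's approach: the paper omits the proof entirely, saying only that ``a very similar argument'' to Lemma~\ref{lem:ginz-comm-E} works, and your dualization is precisely that argument. One small point worth making explicit in your Verdier-duality step is that $Z$ is proper, so $H^k(\cG) \cong H^{-k}(\mathbb{D}\cG)^*$ for complexes supported on $Z$; this is what lets the parity statement for $(\mathbb{D}\cF)_Z$ from the first lemma transfer to $\cF^Z$.
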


With the following proposition, we complete the proof of Theorem~\ref{thm:ginzburg}.

\begin{prop}\label{prop:ginzburg}
Let $Z \subset X$ be a finite union of Schubert varieties, and let $\cE, \cF \in \ParityB(X)$.  The natural map
\[
\Hom^\bullet_{\Dbb(X)}(\cE_Z, \cF^Z) \to \Hom^\bullet_{H^\bullet(X;\bk)}(H^\bullet(\cE_Z), H^\bullet(\cF^Z))
\]
is an isomorphism.
\end{prop}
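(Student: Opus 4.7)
The plan is to proceed by induction on the number of Bruhat cells comprising $Z$; the empty case is vacuous. For the inductive step, pick a Bruhat cell $X_w$ that is open in $Z$, and set $Z' := Z \setminus X_w$. Applying $\Hom^\bullet_{\Dbb(X)}({-}, \cF^Z)$ to the distinguished triangle $j_{w!}j_w^*\cE \to \cE_Z \to \cE_{Z'} \to$ yields a long exact sequence. Because $\cE$ is parity and $\cF^Z$ is $!$-parity, the vanishing of odd-degree Ext groups between parity objects supported on different strata (\cite[Cor.~2.8]{jmw}) causes this sequence to split into short exact sequences in each degree, so that $\Hom^\bullet(\cE_Z,\cF^Z)$ fits between $\Hom^\bullet(\cE_{Z'},\cF^Z)$ and $\Hom^\bullet(j_{w!}j_w^*\cE,\cF^Z)$. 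The inductive hypothesis, applied to $Z'$, identifies the $Z'$-part with its counterpart on the cohomology side.

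In parallel, on the cohomology side, Lemmas~\ref{lem:ginz-comm-E} and~\ref{lem:ginz-comm-F} furnish matching short exact sequences decomposing $H^\bullet(\cE_Z)$ and $H^\bullet(\cF^Z)$ as extensions of an ``open cell'' piece by a ``closed complement'' piece, with $\bar c_w$ identifying $H^\bullet(j_w^*\cE_Z)$ with $H^{\bullet+2\ell(w)}(j_{w!}j_w^!\cE_Z)$ and similarly for $\cF$. Applying $\Hom^\bullet_{H^\bullet(X;\bk)}({-}, H^\bullet(\cF^Z))$ to the $\cE$-side sequence remains short exact thanks to the $\cF$-side sequence and the $\bar c_w$-compatibility. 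A five-lemma argument comparing the two short exact sequences then reduces everything to showing that the natural map
\[
\Hom^\bullet_{\Dbb(X)}(j_{w!}j_w^*\cE, \cF^Z) \to \Hom^\bullet_{H^\bullet(X;\bk)}(H^\bullet(j_{w!}j_w^!\cE_Z), H^\bullet(\cF^Z))
\]
is an isomorphism.

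This ``single-cell'' identification is the principal obstacle. On the left, adjunction gives $\Hom^\bullet_{\Db(X_w)}(j_w^*\cE, j_w^!\cF)$; since both arguments are direct sums of shifts of $\ubk_{X_w}$ and $X_w$ is an affine space, this computes to an explicit free module over $\bk$. On the right, the $H^\bullet(X;\bk)$-action on $H^\bullet(j_{w!}j_w^!\cE_Z)$ factors (via Lemma~\ref{lem:ginz-comm-E}) through the quotient in which multiplication by $\bar c_w$ realises the restriction to the cell, while the surjectivity $H^\bullet(\cF) \twoheadrightarrow H^\bullet(k_w^*\cF)$ established in the earlier stalk-surjectivity lemma, combined with Lemma~\ref{lem:ginz-comm-F}, shows that the induced Hom over $H^\bullet(X;\bk)$ reduces to the same explicit computation. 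Carefully bookkeeping the $\bar c_w$-action so that the two descriptions match is the delicate step; once accomplished, the induction closes.
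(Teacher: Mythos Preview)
Your approach is essentially the same as the paper's: both induct on the number of Bruhat cells in $Z$, peel off an open cell $X_w$, and carry out a diagram chase driven by the commutative squares of Lemmas~\ref{lem:ginz-comm-E} and~\ref{lem:ginz-comm-F}. The paper itself gives only a sketch, deferring the details of the chase to~\cite[Proposition~3.10 and Eq.~(3.8a--b)]{gin:pscsa}; your outline goes a bit further in spelling those out but is in the same spirit (one small point to make explicit is the identification $\Hom^\bullet(\cE_{Z'},\cF^Z)\cong\Hom^\bullet(\cE_{Z'},\cF^{Z'})$, needed before invoking the inductive hypothesis, which follows from $i_{Z'}^!\cF^Z\cong i_{Z'}^!\cF$).
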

\begin{proof}[Proof Sketch]
This is proved by induction on the number of Bruhat cells in $Z$, via a diagram chase relying on formal consequences of the commutative diagrams in Lemmas~\ref{lem:ginz-comm-E} and~\ref{lem:ginz-comm-F}.  The argument is essentially identical to the proof of~\cite[Proposition~3.10]{gin:pscsa}; see also~\cite[Eq.~(3.8a--b)]{gin:pscsa}.  We omit further details. 
\end{proof}

\section{Ext-groups of parity sheaves on the affine Grassmannian}
\label{sect:extgr}

In this section, $\bk$ will denote an algebraically closed field whose characteristic is good for $\Gv$.  We also assume that $\Gv$ satisfies~\eqref{eqn:reasonable}.  Recall that the $\Go$-orbits are parametrized by $\bXp$.  If $\Gamma \subset \bXp$ is a finite order ideal, we can form the closed subset $\Gr_\Gamma = \bigcup_{\gamma \in \Gamma} \Gr_\gamma$. Let
\begin{equation}\label{eqn:openinclu}
j_\Gamma: U_\Gamma = \Gr \setminus \Gr_\Gamma \hookrightarrow \Gr
\end{equation}
be the complementary open inclusion.  For the remainder of the paper, all constructible complexes on $\Gr$ or on any subset of $\Gr$ should be understood to be constructible with respect to the $\Go$-orbits.  In particular, $\ParityGo(\Gr)$ will denote the category of $\Go$-constructible parity objects.

Our goal is compute certain $\Ext$-groups in $\Dbgo(\Gr)$ or in some $\Dbgo(U_\Gamma)$ in terms of $\PCohN$.
The main result, a modular generalization of~\cite[Prop\-o\-si\-tion~1.10.4]{gin:pslg}, depends on a result of Yun--Zhu~\cite{yz} describing the cohomology of $\Gr$.  We begin by recalling that result.

Let $\fe \in \Lie(\Gv)$ be the principal nilpotent element described in~\cite[Proposition~5.6]{yz}.  Let $\Bv \subset \Gv$ be the unique Borel subgroup such that $\fe \in \Lie(\Bv)$, and let $\Uv \subset \Bv$ be its unipotent radical. Recall that $\Gv \times \Gm$ acts on $\cN$ by $(g,z)\cdot x = z^{-2}\Ad(g)(x)$. Below, for any subgroup $H \subset \Gv \times \Gm$, we write $H_\fe$ for the stabilizer of $\fe$ in $H$. 

Let $\Gr^\circ$ be the identity component of $\Gr$, and let $\Dist(\Uv_\fe)$ denote the algebra of distributions on $\Uv_\fe$ with support at the identity (see~\cite[\S I.7.1 and~\S I.7.7]{jan:rag}).

\begin{thm}[Yun--Zhu~\cite{yz}]\label{thm:yz}
There is a natural isomorphism
\begin{equation}\label{eqn:yz}
H^\bullet(\Gr^\circ;\bk) \cong \Dist(\Uv_\fe).
\end{equation}
\end{thm}

The ``naturality'' in this proposition refers to a certain compatibility with $\Sat$.  To be more precise, given $M \in \PervGo$, the isomorphism~\eqref{eqn:yz} endows $H^\bullet(M)$ with the structure of a $\Dist(\Uv_\fe)$-module.  On the other hand, if we forget the grading on $H^\bullet(M)$, we obtain the underlying vector space of $\Sat^{-1}(M) \in \Rep(G^\vee)$.  Thus, we can regard $H^\bullet(M)$ as a representation of $\Uv_\fe \subset G^\vee$, and hence as a $\Dist(\Uv_\fe)$-module.  In fact, these two $\Dist(\Uv_\fe)$-module structures on $H^\bullet(M)$ coincide.

\begin{rmk}
Theorem~\ref{thm:yz} is stated in~\cite{yz} only when $G$ is quasi-simple and simply connected (in which case $\Gr = \Gr^\circ$), but it is easily extended to general $G$ by routine arguments.  One caveat is that the element $\fe$ may depend on a choice in general (it is uniquely determined in the quasi-simple case).  Once $\fe$ is fixed, however, the isomorphism~\eqref{eqn:yz} is still natural in the sense described above.
\end{rmk}

Let $\Gm$ act on $\Gv$ by conjugation via the cocharacter $2\rho: \Gm \to \Tv$, where $2\rho$ is the sum of the positive roots for $G$.  The resulting semidirect product will be denoted $\Gm \ltimes_{2\rho} \Gv$.  This action preserves the subgroups $\Gv_\fe$, $\Bv_\fe$, and $\Uv_\fe$, so groups such as $\Gm \ltimes_{2\rho} \Uv_\fe$ also make sense.

\begin{lem}\label{lem:fe-centralizer}
There are isomorphisms $(\Gv \times \Gm)_\fe \cong \Gm \ltimes_{2\rho} \Bv_\fe \cong \Gm \ltimes_{2\rho} \Uv_\fe \times \Cent(\Gv)$, where $\Cent(\Gv)$ denotes the center of $\Gv$.
\end{lem}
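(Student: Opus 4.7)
The plan is to split the statement into two isomorphisms and construct each one explicitly. For the first isomorphism, I would produce a splitting of the projection $(\Gv \times \Gm)_\fe \to \Gm$ and then identify its kernel with $\Gv_\fe$. The splitting will be furnished by the principal $\mathfrak{sl}_2$-relation $\Ad(2\rho(z))(\fe) = z^2 \fe$ for all $z \in \Gm$, which is a standard consequence of $\fe$ being principal (decomposing, up to nonzero scalars, into a sum of simple root vectors). Combined with the action formula $(g,z)\cdot x = z^{-2}\Ad(g)(x)$, this shows that $(2\rho(z), z) \in (\Gv \times \Gm)_\fe$, so $z \mapsto (2\rho(z), z)$ is the desired section. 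Then, for general $(g,z) \in (\Gv \times \Gm)_\fe$, the element $g \cdot 2\rho(z)^{-1}$ centralizes $\fe$, so the projection has kernel exactly $\Gv_\fe \times \{1\}$. The conjugation action of the section on this kernel is precisely the $2\rho$-twisted action, yielding $(\Gv \times \Gm)_\fe \cong \Gm \ltimes_{2\rho} \Gv_\fe$.

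Next, I would appeal to the standard fact that a regular nilpotent element has connected centralizer contained in the unique Borel subalgebra containing it. Since $\fe \in \Lie(\Bv)$ by the choice of $\Bv$, this gives $\Gv_\fe \subset \Bv$, and hence $\Gv_\fe = \Bv_\fe$. This establishes the first isomorphism $(\Gv \times \Gm)_\fe \cong \Gm \ltimes_{2\rho} \Bv_\fe$.

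For the second isomorphism, I would decompose $\Bv_\fe$ using $\Bv = \Tv \ltimes \Uv$. The torus part $\Tv_\fe$ consists of those $t \in \Tv$ with $\alpha(t) = 1$ for every simple root $\alpha$ appearing in $\fe$; under the hypothesis that the derived group of $\Gv$ is simply connected (part of \eqref{eqn:reasonable}), this exactly cuts out $\Cent(\Gv)$. Thus $\Bv_\fe = \Cent(\Gv) \ltimes \Uv_\fe$, but since $\Cent(\Gv) \cap \Uv_\fe$ is trivial and $\Cent(\Gv)$ is central in $\Bv$, this is actually a direct product. Finally, because the cocharacter $2\rho$ lands in $\Tv$, the conjugation action of $\Gm$ through $2\rho$ is trivial on $\Cent(\Gv) \subset \Tv$, so the semidirect product $\Gm \ltimes_{2\rho} (\Cent(\Gv) \times \Uv_\fe)$ collapses to $\Cent(\Gv) \times (\Gm \ltimes_{2\rho} \Uv_\fe)$, giving the second isomorphism.

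The only real subtlety is that each of the ``standard'' facts about principal nilpotents used above (namely $\Ad(2\rho(z))(\fe) = z^2\fe$, $\Gv_\fe \subset \Bv$, and $\Tv_\fe = \Cent(\Gv)$) must hold in the positive-characteristic setting. Each of these holds because $\chr \bk$ is good for $\Gv$ and \eqref{eqn:reasonable} is in force; this is where I would insert a precise reference to Jantzen's treatment (e.g., \cite{jan:nort}) rather than grind out the verifications, which in characteristic $0$ would be entirely routine.
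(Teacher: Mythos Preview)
Your proposal is correct and follows essentially the same approach as the paper. The paper compresses your section-plus-kernel argument into a single explicit isomorphism $\phi: \Gv \times \Gm \to \Gm \ltimes_{2\rho} \Gv$, $\phi(g,z) = (z, 2\rho(z^{-1})g)$, which it then restricts to the stabilizer, and it obtains the decomposition $\Gv_\fe = \Bv_\fe \cong \Uv_\fe \times \Cent(\Gv)$ by a direct citation to Springer~\cite[Theorem~5.9(b)]{springer} rather than re-deriving it as you do.
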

\begin{proof}
Let $\phi: \Gv \times \Gm \to \Gm \ltimes_{2\rho} \Gv$ be the map $\phi(g,z) = (z,2\rho(z^{-1})g)$.  This map is an isomorphism, and it is easily checked that it takes $(\Gv \times \Gm)_\fe$ to $\Gm \ltimes_{2\rho} \Gv_\fe$.  By~\cite[Theorem~5.9(b)]{springer}, $\Gv_\fe = \Bv_\fe \cong \Uv_\fe \times \Cent(\Gv)$.
\end{proof}

\begin{lem}\label{lem:nilp-dist}
Let $M_1,M_2 \in \Perv_{\Go}(\Gr^\circ,\bk)$.  There is a natural isomorphism
\[
\uHom_{\CohN}(\Sat^{-1}(M_1) \otimes \cON, \Sat^{-1}(M_2) \otimes \cON) \simto \uHom_{\Dist(\Uv_\fe)}(H^\bullet(M_1), H^\bullet(M_2)).
\]
\end{lem}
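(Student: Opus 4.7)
The plan is to pass from both sides through representations of the stabilizer $(\Gv\times\Gm)_\fe$. Writing $V_i = \Sat^{-1}(M_i)$, tensor-hom adjunction gives
\[
\uHom_{\CohN}(V_1 \otimes \cON, V_2 \otimes \cON) \cong \uHom_{\Rep(\Gv \times \Gm)}(V_1, V_2 \otimes \bk[\cN]),
\]
the grading on the right coming from the $\Gm$-action on $\bk[\cN]$. Under hypothesis~\eqref{eqn:reasonable} and the assumption of good characteristic, $\cN$ is normal and $\cN \setminus \Gv\cdot\fe$ has codimension at least two, so restriction yields a $(\Gv\times\Gm)$-equivariant isomorphism $\bk[\cN] \simto \bk[\Gv\cdot\fe]$. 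Since the $(\Gv\times\Gm)$-orbit of $\fe$ coincides with the $\Gv$-orbit (the $\Gm$-factor acts by scalars), we obtain
\[
\bk[\cN] \cong \mathrm{Ind}_{(\Gv\times\Gm)_\fe}^{\Gv\times\Gm}\bk,
\]
and Frobenius reciprocity rewrites the right-hand side as $\uHom_{(\Gv\times\Gm)_\fe}(V_1, V_2)$.

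By Lemma~\ref{lem:fe-centralizer}, $(\Gv\times\Gm)_\fe \cong \Gm \ltimes_{2\rho}\Uv_\fe \times \Cent(\Gv)$. Since $M_1$ and $M_2$ both live on the component $\Gr^\circ$, the factor $\Cent(\Gv)$ acts on each $V_i$ by the same character, so it imposes no nontrivial condition on Hom spaces. The factor $\Uv_\fe$ is a connected unipotent algebraic group, so its rational representations on finite-dimensional $\bk$-spaces are the same as locally finite $\Dist(\Uv_\fe)$-modules; the semidirect $\Gm$-factor (acting via $2\rho$) simply records the grading, matching the $2\rho$-induced grading on $\Dist(\Uv_\fe)$. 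This yields
\[
\uHom_{(\Gv\times\Gm)_\fe}(V_1, V_2) \cong \uHom_{\Dist(\Uv_\fe)}(V_1, V_2).
\]
Finally, Proposition~\ref{prop:yz}, together with its naturality property, provides graded $\Dist(\Uv_\fe)$-module isomorphisms $V_i \cong H^\bullet(M_i)$, and composing all the preceding identifications produces the claimed natural isomorphism.

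The main point requiring care is the induction-module description $\bk[\cN] \cong \mathrm{Ind}_{(\Gv\times\Gm)_\fe}^{\Gv\times\Gm}\bk$: this rests on normality of $\cN$ together with the codimension-two property of $\cN\setminus\Gv\cdot\fe$, both of which are guaranteed under hypothesis~\eqref{eqn:reasonable} in good characteristic (via results in the spirit of~\cite{klt}). A secondary bookkeeping point is that the two gradings in play (cohomological on $H^\bullet(M_i)$, Tate on $V_i\otimes\cON$) align correctly with the $2\rho$-grading on $\Dist(\Uv_\fe)$; this is precisely the content of the naturality clause after Proposition~\ref{prop:yz}.
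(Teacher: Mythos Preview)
Your argument is correct and follows essentially the same route as the paper: both pass through $\Rep((\Gv\times\Gm)_\fe)$ by exploiting normality of $\cN$ and the codimension-two complement of the regular orbit, then invoke Lemma~\ref{lem:fe-centralizer} and Proposition~\ref{prop:yz}. The only cosmetic difference is that the paper phrases the middle step geometrically (restriction to $\cN_\reg$ and the equivalence $\CohGm(\cN_\reg)\simeq\Rep((\Gv\times\Gm)_\fe)$), whereas you phrase it algebraically via $\bk[\cN]\cong\mathrm{Ind}\,\bk$ and Frobenius reciprocity; note that the precise role of hypothesis~\eqref{eqn:reasonable} is to guarantee that the orbit map $\Gv/\Gv_\fe\to\cN_\reg$ is an isomorphism of varieties (separability), which is what makes your induction identification valid.
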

In the course of the proof of this lemma, we will encounter an analogous statement (see~\eqref{eqn:nilp-free-rep} below) that is entirely in terms of $\Gv$-modules, and that does not involve the geometric Satake equivalence.
\begin{proof}
The assumptions on $\bk$ imply that $\cN$ is a normal variety (see, e.g.,~\cite[Prop\-o\-si\-tion~8.5]{jan:nort}).  Let $\cN_\reg \subset \cN$ be the subvariety consisting of regular nilpotent elements, and let $\cO_\reg$ denote its structure sheaf.  Let $h: \cN_\reg \hookrightarrow \cN$ be the inclusion map, and let $V \in \Rep(\Gv)$.  Since the complement of $\cN_\reg$ has codimension at least $2$, the restriction map $h^*: \Gamma(V \otimes \cON) \to \Gamma(V \otimes \cO_\reg)$ is an isomorphism of $(\Gv \times \Gm)$-modules.  Equivalently, the adjunction map $V \otimes \cON \to R^0h_*h^*(V \otimes \cON)$ is an isomorphism.  It follows that $h^*$ is fully faithful on the category of free sheaves in $\CohN$.  In particular, if we set
\begin{equation}\label{eqn:nilp-dist-satake}
V_1 = \Sat^{-1}(M_1) = H^\bullet(M_1)
\qquad\text{and}\qquad
V_2 = \Sat^{-1}(M_2) = H^\bullet(M_2),
\end{equation}
then $h^*$ gives us a natural isomorphism
\[
\uHom_{\CohN}(V_1 \otimes \cON, V_2 \otimes \cON) \simto \uHom_{\CohGm(\cN_\reg)}(V_1 \otimes \cO_\reg, V_2 \otimes \cO_\reg).
\]
Now, $\cN_\reg$ is the orbit of the point $\fe$ under $\Gv$ or $\Gv \times \Gm$. Thanks to condition~\eqref{eqn:reasonable}, the natural map $\Gv/\Gv_\fe \to \cN_\reg$ is an isomorphism of varieties.  (See~\cite[\S2.9]{jan:nort}, for example.)  Factoring this map as $\Gv/\Gv_\fe \to (\Gv \times \Gm)/(\Gv \times \Gm)_\fe \to \cN_\reg$, we see that $(\Gv \times \Gm)/(\Gv \times \Gm)_\fe \to \cN_\reg$ is also an isomorphism.  Therefore, restriction to $\fe$ induces an equivalence of categories $\CohGm(\cN_\reg) \simto \Rep((\Gv \times \Gm)_\fe)$.  In view of Lemma~\ref{lem:fe-centralizer}, we have a natural isomorphism
\begin{equation}\label{eqn:nilp-free-rep}
\uHom_{\CohN}(V_1 \otimes \cON, V_2 \otimes \cON) \simto \uHom_{\Gm \ltimes_{2\rho} \Uv_\fe \times \Cent(\Gv)}(V_1, V_2).
\end{equation}
Since the $M_i$ are supported on $\Gr^\circ$, $\Cent(\Gv)$ acts trivially on the $V_i$, so we may simply omit mentioning it and consider $\Hom_{\Gm \ltimes_{2\rho} \Uv_\fe}(V_1, V_2)$.

Now, the category of finite-dimensional $\Uv_\fe$-representations can be identified with a full subcategory of the finite-dimensional $\Dist(\Uv_\fe)$-modules~\cite[Lemma I.7.16]{jan:rag}.  Similarly, the category of finite-dimensional $(\Gm \ltimes_{2\rho} \Uv_\fe)$-modules can be identified with a full subcategory of \emph{graded} finite-dimensional $\Dist(\Uv_\fe)$-modules, where $\Dist(\Uv_\fe)$ itself is graded by the action of $\Gm$ via the cocharacter $2\rho$.  This is precisely the grading appearing in~\eqref{eqn:yz}, according to the remarks following~\cite[Theorem~1.1]{yz}.  On the other hand, the grading on the right-hand side of each equation in~\eqref{eqn:nilp-dist-satake} is also given by $2\rho$, as seen in~\cite[Theorem~3.6]{mv}.  Thus, we have
\[
\uHom_{\Gm \ltimes_{2\rho} \Uv_\fe}(V_1, V_2) \cong \uHom_{\Dist(\Uv_\fe)}(H^\bullet(M_1), H^\bullet(M_2)),
\]
and the result follows.
\end{proof}

\begin{prop}\label{prop:hom-tilt-map}
For all $V_1, V_2 \in \Rep(\Gv)$, there is a natural map
\begin{equation}\label{eqn:hom-tilt}
\Evsat: \Hom^i_{\DGo}(\Sat(V_1), \Sat(V_2)) \to \Hom_{\CohN}(V_1 \otimes \cON, V_2 \otimes \cON\la i \ra).
\end{equation}
When $\Sat(V_1)$ and $\Sat(V_2)$ are parity sheaves, this is an isomorphism.  For any $V_1$ and $V_2$, this map is compatible with composition; i.e., the following diagram commutes:
\begin{equation}\label{eqn:hom-tilt-map-compose} \hspace{-2em}
\begin{tikzpicture}[description/.style={fill=white,inner sep=1.5pt}, baseline=(current  bounding  box.center),] 
\tikzstyle{every node}=[font=\small]
\matrix (m) [matrix of math nodes, row sep=2.5em,
column sep=1em, text height=1ex, text depth=0.25ex, nodes in empty cells]
{\Hom^\bullet(\Sat(V_2), \Sat(V_3)) \otimes \Hom^\bullet(\Sat(V_1),\Sat(V_2)) & \Hom^\bullet(\Sat(V_1), \Sat(V_3))  \\
 \uHom(V_2 \otimes \cON, V_3 \otimes \cON) \otimes \uHom(V_1 \otimes \cON, V_2 \otimes \cON)&  \uHom(V_1 \otimes \cON, V_3 \otimes \cON)\\ };
\path[->,>=angle 90, font=\scriptsize]
(m-1-1) edge (m-1-2)
edge (m-2-1)
(m-1-2) edge (m-2-2)
(m-2-1) edge (m-2-2);
\end{tikzpicture}
\end{equation}
\end{prop}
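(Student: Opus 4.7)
The plan is to construct $\Evsat$ by combining the cohomology functor with Proposition~\ref{prop:yz} and Lemma~\ref{lem:nilp-dist}, and then to prove the isomorphism claim by invoking Theorem~\ref{thm:ginzburg}.

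First I would reduce to the identity component. Both sides of~\eqref{eqn:hom-tilt} split as direct sums indexed by pairs of $\Cent(\Gv)$-central characters: on the coherent side by $\Gv$-equivariance, and on the constructible side because $\Sat(V^\chi)$ is supported on the component $\Gr^\chi$ and sheaves on distinct components admit no morphisms between them. Using a fixed translation isomorphism $\Gr^\circ \simto \Gr^\chi$, we reduce to the case where $\Sat(V_i)$ is supported on $\Gr^\circ$, i.e., to the setting of Lemma~\ref{lem:nilp-dist}. Given $f \in \Hom^i_{\DGo}(\Sat V_1, \Sat V_2)$, apply the cohomology functor $H^\bullet$ to produce a map $H^\bullet(f): H^\bullet(\Sat V_1) \to H^\bullet(\Sat V_2)\la i\ra$ that is automatically linear over $H^\bullet(\Gr^\circ;\bk)$; via Proposition~\ref{prop:yz} this is the same as $\Dist(\Uv_\fe)$-linearity, and Lemma~\ref{lem:nilp-dist} identifies the space of such maps with $\Hom_{\CohN}(V_1 \otimes \cON, V_2 \otimes \cON\la i\ra)$. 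Composing these steps defines $\Evsat$, and naturality in $(V_1, V_2)$ is evident from the construction.

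For the isomorphism claim when $\Sat(V_1)$ and $\Sat(V_2)$ are parity sheaves, I would use Theorem~\ref{thm:ginzburg}. One first observes that a $\Go$-constructible parity sheaf on $\Gr^\circ$ is also $\cB$-constructible parity: each $\Go$-orbit is an iterated affine bundle over its stratification by $\cB$-orbits, which all have even real dimension, so $\Go$-parity implies $\cB$-parity. Theorem~\ref{thm:ginzburg} then yields
\[
\Hom^\bullet_{\Dbb(\Gr^\circ)}(\Sat V_1, \Sat V_2) \simto \Hom^\bullet_{H^\bullet(\Gr^\circ;\bk)}(H^\bullet\Sat V_1, H^\bullet\Sat V_2),
\]
and the left-hand side coincides with $\Hom^\bullet_{\Dbgo(\Gr^\circ)}$ because the inclusion $\Dbgo \hookrightarrow \Dbb$ is fully faithful on $\Go$-constructible complexes. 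Rewriting the target via Proposition~\ref{prop:yz} as $\uHom_{\Dist(\Uv_\fe)}$ and then applying Lemma~\ref{lem:nilp-dist} produces the desired isomorphism.

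Finally, compatibility with composition, i.e.\ the commutativity of~\eqref{eqn:hom-tilt-map-compose}, is formal once the construction is unwound: cohomology is a functor, and the identifications of Proposition~\ref{prop:yz} and Lemma~\ref{lem:nilp-dist} are natural in the relevant pairs of arguments, so the diagram commutes by a straightforward chase. The main technical point that has to be pinned down carefully is the compatibility between the two a priori different module structures on $H^\bullet(\Sat V)$ — the one coming from cup product by $H^\bullet(\Gr^\circ;\bk)$, and the one coming from viewing $V = \Sat^{-1}(\Sat V)$ as a $\Uv_\fe$-representation via $\Uv_\fe \subset \Gv$. This agreement is precisely the naturality statement recorded after Proposition~\ref{prop:yz}, and it is what makes all three ingredients glue together cleanly.
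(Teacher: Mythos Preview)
Your proposal is correct and follows essentially the same approach as the paper: define $\Evsat$ as the composite of the cohomology functor $H^\bullet$ with the isomorphism of Lemma~\ref{lem:nilp-dist}, invoke Theorem~\ref{thm:ginzburg} for the parity case, and deduce compatibility with composition from the functoriality of both steps. You spell out a few points the paper leaves implicit (the reduction to $\Gr^\circ$, the passage from $\Go$-parity to $\cB$-parity, and the full faithfulness of $\Dbgo \hookrightarrow \Dbb$), but the underlying argument is the same.
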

Note that the naturality of~\eqref{eqn:hom-tilt} just means that it is compatible with composition of morphisms in $\Perv_\Go(\Gr)$.  The diagram~\eqref{eqn:hom-tilt-map-compose} expresses a stronger property, allowing arbitrary morphisms in $\Dbgo(\Gr)$.
\begin{proof}
We construct the map~\eqref{eqn:hom-tilt} as the following composition:
\begin{multline}\label{eqn:hom-tilt-cohom}
\Hom^\bullet(\Sat(V_1), \Sat(V_2)) \xrightarrow{H^\bullet}
\uHom_{H^\bullet(\Gr)} (H^\bullet(\Sat(V_1)), H^\bullet(\Sat(V_2))) \\
\xrightarrow[\sim]{\text{Lemma~\ref{lem:nilp-dist}}}
\uHom(V_1 \otimes \cON, V_2 \otimes \cON)
\end{multline}
When $\Sat(V_1)$ and $\Sat(V_2)$ are parity sheaves, Theorem~\ref{thm:ginzburg} tells us that the first map in~\eqref{eqn:hom-tilt-cohom} is an isomorphism.  Note that the first map in~\eqref{eqn:hom-tilt-cohom} is induced by a functor defined on all of $\Dbgo(\Gr)$, while the second map is essentially the inverse of~\eqref{eqn:nilp-free-rep}, which is induced by a functor defined on all of $\CohN$. These two observations imply the commutativity of~\eqref{eqn:hom-tilt-map-compose}.
\end{proof}

For the remainder of this section, we will assume that $\chr \bk$ is also a JMW prime (Definition~\ref{defn:jmw}) for $\Gv$.  Recall that most good primes are known to be JMW:

\begin{thm}[{\cite[Theorem~1.8]{jmw:parity}}]
Assume $\Gv$ is quasi-simple.  If $\chr \bk$ satisfies the bounds in Table~\ref{tab:jmw}, then $\Sat$ sends every tilting module to a parity sheaf.
\end{thm}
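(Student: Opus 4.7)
The plan is to reduce the statement, type by type, to the fact that parity sheaves are stable under convolution (using the conditions on $\chr \bk$ to avoid torsion in the cohomology of the strata) and that certain ``building-block'' tilting modules are automatically sent to parity sheaves. Concretely, since $\Sat$ is a tensor equivalence, it intertwines tensor product of $\Gv$-representations with the convolution product $\star$ of $\Go$-equivariant sheaves on $\Gr$. So if a set of tilting modules $\til(\mu_1),\dots,\til(\mu_r)$ go to parity sheaves, then every indecomposable summand of $\til(\mu_1)\otimes\cdots\otimes\til(\mu_r)$ -- equivalently, every tilting module appearing in such a tensor product by the classification of indecomposable tiltings -- is sent to a summand of a convolution of parity sheaves.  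Provided convolution preserves the parity property (this is \cite[Theorem~4.8]{jmw}, whose hypothesis on $\chr\bk$ is exactly the parity-vanishing of stalk cohomology of $\Go$-orbits needed to apply the general machinery), that summand is itself parity, and we are done for those $\lambda$.

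Thus the task reduces to finding, in each type, a sufficient supply of ``generating'' tilting modules whose Satake image is manifestly parity. The natural candidates are the $\til(\varpi_i)$ for $\varpi_i$ minuscule or quasi-minuscule: in those cases $\Gr_{\varpi_i}$ is already closed (or closed up to a single point stratum), and its closure is smooth or admits an explicit small/semismall resolution, so $\IC(\varpi_i) = \cT(\varpi_i)$ is (up to shift) a constant sheaf on a smooth variety and trivially parity. In type $A_n$ every fundamental weight is minuscule and every dominant weight is a sum of fundamentals, so all tilting modules arise this way and no restriction on $p$ is needed. In types $B_n,D_n$ one uses the vector and spin representations; the spin orbits are smooth of even real dimension but their stalks acquire $2$-torsion, accounting for the restriction $p>2$. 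Type $C_n$ has no minuscule fundamental weights besides the first, so one must handle the remaining fundamental orbits by hand, and the bound $p>n$ emerges from cohomology computations on these Schubert varieties. The exceptional types follow the same scheme but require the detailed numerical information on the intersection cohomology of the relevant orbits; the bounds in the table are precisely the primes at which no torsion appears in any of the stalks that enter the induction.

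The order of operations would then be: (i)~establish the convolution stability of parity sheaves on $\Gr$ under the hypotheses on $\chr\bk$; (ii)~verify directly that for each minuscule or quasi-minuscule fundamental weight $\varpi_i$ in the given type, $\cT(\varpi_i) = \IC(\varpi_i)$ is parity (smoothness plus parity-vanishing of cohomology of the orbit closure, via a small resolution where necessary); (iii)~check that every dominant weight is reachable by tensor operations from these fundamentals, so that every $\cT(\lambda)$ appears as a summand of a convolution of parity sheaves; (iv)~in the types where some fundamental weight is not minuscule, carry out an ad hoc geometric analysis of the corresponding Schubert variety, producing either a semismall resolution or a direct stratified-Morse-theoretic verification of parity-vanishing, and read off the optimal $p$-bound from these computations.

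The main obstacle is step (iv): for the non-minuscule fundamental orbits in types $C_n$, $E_7$, $E_8$ the closures $\overline{\Gr_{\varpi_i}}$ are singular and one cannot simply invoke smoothness. Here one must produce an explicit resolution or stratification, compute the parities of stalks of pushforwards along it, and verify that no odd-degree cohomology appears provided $p$ avoids the listed primes. This is the genuinely case-specific part of the proof; it is where the nonuniform entries in Table~\ref{tab:jmw} originate, and where essentially all of the work in \cite{jmw:parity} is concentrated.
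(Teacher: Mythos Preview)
The paper does not prove this theorem at all: it is stated with the attribution \cite[Theorem~1.8]{jmw:parity} and immediately used, with no argument supplied. So there is no ``paper's own proof'' to compare against; the result is imported wholesale from Juteau--Mautner--Williamson.

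Your outline is a fair high-level summary of the strategy actually carried out in \cite{jmw:parity}: reduce to generators via the tensor/convolution compatibility of $\Sat$, use that convolution preserves parity on $\Gr$ (affine pavings of convolution fibers), handle minuscule and quasi-minuscule fundamental weights directly, and then do type-by-type geometric work for the remaining fundamentals to extract the bounds in Table~\ref{tab:jmw}. A few of your details are imprecise (e.g.\ the role of $2$-torsion in types $B$/$D$ is not about ``stalks of spin orbits'' but about cohomology arising in the inductive convolution analysis; and the logic runs more naturally in the direction ``parity sheaves are perverse, hence tilting'' rather than the reverse), but the architecture is right. Since the present paper treats this as a black box, your sketch goes well beyond what is required here.
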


\begin{prop}\label{prop:evsat-exist}
There is an equivalence of additive categories
\[
\Evsat: \ParityGo(\Gr) \to \Tilt(\PCohN)
\]
such that for any tilting $\Gv$-module $V$, we have $\Evsat(\Sat(V)[n]) \cong V \otimes \cON\la n\ra$.
\end{prop}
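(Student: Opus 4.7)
The plan is to assemble $\Evsat$ directly from the Hom-level map constructed in Proposition~\ref{prop:hom-tilt-map}. The starting observation is that, under the JMW assumption, source and target categories admit parallel Krull--Schmidt descriptions. Each $\cT(\lambda) = \Sat(\til(\lambda))$ is a parity sheaf, and is indecomposable because $\til(\lambda)$ is indecomposable and $\Sat$ is an equivalence; by \cite[Theorem~2.12]{jmw}, every indecomposable object of $\ParityGo(\Gr)$ is isomorphic to some $\Sat(\til(\lambda))[n]$ with $\lambda \in \bXp$, $n \in \Z$. Symmetrically, by Theorem~\ref{thm:myron}(1), the indecomposables of $\Tilt(\PCohN)$ are the objects $T_\lambda\la m\ra = (\til(\lambda) \otimes \cON)\la m + \delta_\lambda\ra$.

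The proposed functor sends $\Sat(\til(\lambda))[n]$ to $\til(\lambda) \otimes \cON\la n\ra = T_\lambda\la n - \delta_\lambda\ra$, extended additively to arbitrary parity sheaves via their decomposition into indecomposables (which, by Krull--Schmidt, determines $\Evsat$ on objects up to canonical isomorphism). The image visibly lies in $\Tilt(\PCohN)$, being a direct sum of indecomposable tiltings. On morphisms, given parity sheaves $\cE_i = \Sat(V_i)[n_i]$ with $V_i$ tilting, I identify
\[
\Hom(\cE_1, \cE_2) = \Hom^{n_2 - n_1}(\Sat(V_1), \Sat(V_2))
\]
and apply the map~\eqref{eqn:hom-tilt} from Proposition~\ref{prop:hom-tilt-map} to land in the appropriate graded piece of $\uHom(V_1 \otimes \cON, V_2 \otimes \cON)$. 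Because $\Sat(V_i)$ are parity under the JMW assumption, this map is an isomorphism.

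Functoriality is the content of the commutative square~\eqref{eqn:hom-tilt-map-compose} from Proposition~\ref{prop:hom-tilt-map}, extended over the shift grading in the obvious way. Full faithfulness follows from the isomorphism statement in the same proposition. Essential surjectivity is immediate from the Krull--Schmidt description above, via $T_\lambda\la m\ra \cong \Evsat(\Sat(\til(\lambda))[m + \delta_\lambda])$. Additivity is built into the construction.

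The main delicate point is that the definition of $\Evsat$ on objects depends on a chosen decomposition into indecomposables, so one must verify that different choices produce canonically isomorphic functors. This ambiguity is absorbed by Krull--Schmidt together with the naturality of the morphism-level map in Proposition~\ref{prop:hom-tilt-map}; I do not expect a substantive obstacle beyond this, since the heavy lifting has already been done in Proposition~\ref{prop:hom-tilt-map} and Theorem~\ref{thm:myron}.
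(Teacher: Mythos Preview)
Your proposal is correct and follows essentially the same route as the paper: both arguments use the JMW assumption plus \cite[Theorem~2.12]{jmw} and Theorem~\ref{thm:myron} to obtain matching Krull--Schmidt classifications of indecomposables on the two sides, and then invoke the isomorphism and composition-compatibility parts of Proposition~\ref{prop:hom-tilt-map} to produce the equivalence. The paper's proof is simply more terse, saying that the equivalence on indecomposables extends uniquely (up to isomorphism) rather than spelling out the Krull--Schmidt bookkeeping you discuss in your final paragraph.
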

\begin{proof}
Every indecomposable object in $\ParityGo(\Gr)$ is isomorphic to an object of the form $\Sat(V)[n]$, where $V \in \Rep(\Gv)$ is a tilting module.  Similarly, every indecomposable  tilting object in $\PCohN$ is of the form $V \otimes \cON\la n\ra$ for such a $V$.  Thus, Proposition~\ref{prop:hom-tilt-map} implies that the full subcategory of indecomposable objects in $\ParityGo(\Gr)$ is equivalent to the full subcategory of indecomposable objects in $\Tilt(\PCohN)$.  Such an equivalence extends in a unique way (up to isomorphism) to an equivalence $\ParityGo(\Gr) \simto \Tilt(\PCohN)$.
\end{proof}

\begin{cor}\label{cor:compare-quotient}
Let $\Gamma \subset \bXp$ be a finite order ideal.  There is an equivalence of categories $\Evsat_\Gamma$, unique up to isomorphism, that makes the following diagram commute up to isomorphism:

\begin{center}
\begin{tikzpicture}[description/.style={fill=white,inner sep=1.5pt}] 

\matrix (m) [matrix of math nodes, row sep=2.5em,
column sep=1em, text height=1ex, text depth=0.25ex, nodes in empty cells]
{ \ParityGo(\Gr)  & \Tilt(\PCohN) \\
  \ParityGo(U_\Gamma) & \Tilt(\PCohN/\PCohN_\Gamma)\\ };
\path[->,>=angle 90, font=\scriptsize]
(m-1-1) edge node[above]{$\Evsat$} (m-1-2)
edge node[left]{$j_\Gamma^*$} (m-2-1)
(m-1-2) edge node[left]{$\Pi_\Gamma$} (m-2-2)
(m-2-1) edge node[above]{$\Evsat_\Gamma$} (m-2-2);
\end{tikzpicture}
\end{center}
\end{cor}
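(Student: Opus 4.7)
The strategy is to assemble the corollary from three earlier results: Proposition~\ref{prop:parity-quot} (the topological quotient equivalence), Proposition~\ref{prop:tilting-quot} (the algebraic quotient equivalence), and Proposition~\ref{prop:evsat-exist} (the equivalence $\Evsat$ itself). The essential point is that $\Evsat$ respects the conditions ``supported on $\Gr_\Gamma$'' and ``belongs to $\PCohN_\Gamma$'' on the two sides, and hence descends to an equivalence of the associated naive quotients, at which point the result becomes a formal bookkeeping exercise.

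First I would verify that $\Evsat$ restricts to an equivalence of additive categories
\[
\ParityGo(\Gr_\Gamma) \simto \Tilt(\PCohN_\Gamma).
\]
Since $\Gamma$ is an order ideal, an indecomposable parity object of $\ParityGo(\Gr)$ is supported on $\Gr_\Gamma$ precisely when it is a shift of $\Sat(\til(\lambda))$ for some $\lambda \in \Gamma$. By Theorem~\ref{thm:myron}, its image $\til(\lambda)\otimes\cON\la n\ra$ under $\Evsat$ is, up to shift, the indecomposable tilting object $T_\lambda$, which lies in $\PCohN_{\le\lambda} \subset \PCohN_\Gamma$; conversely every indecomposable tilting object of $\PCohN_\Gamma$ arises in this way. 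Thus $\Evsat$ matches the two subcategories indecomposable-by-indecomposable and so restricts to an equivalence.

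Once this is in hand, $\Evsat$ descends to a well-defined equivalence $\bar\Evsat$ between the naive quotient categories $\ParityGo(\Gr)\aq\ParityGo(\Gr_\Gamma)$ and $\Tilt(\PCohN)\aq\Tilt(\PCohN_\Gamma)$, and I can then define
\[
\Evsat_\Gamma \;=\; \bar\Pi_\Gamma \,\circ\, \bar\Evsat \,\circ\, (\bar{\jmath_\Gamma^*})^{-1},
\]
where $\bar{\jmath_\Gamma^*}: \ParityGo(\Gr) \aq \ParityGo(\Gr_\Gamma) \simto \ParityGo(U_\Gamma)$ is supplied by Proposition~\ref{prop:parity-quot} and $\bar\Pi_\Gamma: \Tilt(\PCohN) \aq \Tilt(\PCohN_\Gamma) \simto \Tilt(\PCohN/\PCohN_\Gamma)$ by Proposition~\ref{prop:tilting-quot}. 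All three factors are equivalences, so $\Evsat_\Gamma$ is too.

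Commutativity of the square then reduces, after pushing through the quotient functors on each side, to the tautological identity expressing that $\bar\Evsat$ descends from $\Evsat$. Uniqueness of $\Evsat_\Gamma$ up to isomorphism follows from the essential surjectivity of $\bar{\jmath_\Gamma^*}$: any functor making the square commute must agree with $\Evsat_\Gamma$ on every object of the form $j_\Gamma^*\cE$, and hence on every object of $\ParityGo(U_\Gamma)$. I do not anticipate any significant obstacle; the substantive content sits in the three cited propositions. The only real check is the first step, namely the compatibility of the geometric support condition for parity sheaves with the algebraic filtration $\PCohN_\Gamma \subset \PCohN$ of perverse-coherent sheaves.
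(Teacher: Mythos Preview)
Your proposal is correct and follows essentially the same approach as the paper's proof: show that $\Evsat$ restricts to an equivalence $\ParityGo(\Gr_\Gamma)\simto\Tilt(\PCohN_\Gamma)$, deduce an induced equivalence on the naive quotients, and then compose with the equivalences of Propositions~\ref{prop:parity-quot} and~\ref{prop:tilting-quot}. Your write-up simply adds the (correct) explicit justification for the restriction step and the uniqueness argument, which the paper leaves implicit.
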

\begin{proof}
The functor $\Evsat$ of Proposition~\ref{prop:evsat-exist} restricts to an equivalence of categories $\ParityGo(\Gr_\Gamma) \simto \Tilt(\PCohN_\Gamma)$, and so it induces an equivalence 
\[
\ParityGo(\Gr) \aq \ParityGo(\Gr_\Gamma) \simto \Tilt(\PCohN) \aq \Tilt(\PCohN_\Gamma).
\]
Propositions~\ref{prop:tilting-quot} and~\ref{prop:parity-quot} then give us the result.
\end{proof}

\begin{thm}\label{thm:hom-tilt}
Let $\Gamma \subset \bXp$ be a finite order ideal.  If $V_1$ has a Weyl filtration and $V_2$ a good filtration, there is a natural isomorphism  of graded vector spaces
\begin{multline}
\Pvsat_\Gamma: \Hom^\bullet_{\Dbgo(U_\Gamma)}(S(V_1)|_{U_\Gamma}, S(V_2)|_{U_\Gamma}) \simto  \label{eqn:hom-tilt-open}\\
\uHom_{\PCohN/\PCohN_\Gamma}(\Pi_\Gamma(V_1 \otimes \cON), \Pi_\Gamma(V_2 \otimes \cON)).
\end{multline}
This map is compatible with~\eqref{eqn:hom-tilt}, in the sense that the  diagram
\begin{equation}\label{eqn:hom-tilt-compare}
\hspace{-2.2em}
\begin{tikzpicture}[description/.style={fill=white,inner sep=1.5pt}, baseline=(current  bounding  box.center)]
\tikzstyle{every node}=[font=\small]
\matrix (m) [matrix of math nodes, row sep=2.5em,
column sep=1.5em, text height=1.5ex, text depth=0.25ex, nodes in empty cells]
{ \Hom^\bullet_{\Dbgo(\Gr)}(\Sat(V_1), \Sat(V_2)) &  \uHom_{\PCohN}(V_1 \otimes \cON, V_2 \otimes \cON)  \\
  \Hom^\bullet_{\Dbgo(U_\Gamma)}(\Sat(V_1)|_{U_\Gamma}, \Sat(V_2)|_{U_\Gamma}) &  \uHom_{\frac{\PCohN}{\PCohN_\Gamma}}(\Pi_\Gamma(V_1 \otimes \cON), \Pi_\Gamma(V_2 \otimes \cON))\\ };
\path[->, >=angle 90, font=\scriptsize]
(m-1-1) edge node[above]{$\Evsat$} node[below]{$\sim$} (m-1-2)
edge node[left]{$j_\Gamma^*$} (m-2-1)
(m-1-2) edge node[right]{$\Pi_\Gamma$} (m-2-2)
(m-2-1) edge node[above]{$\Pvsat_\Gamma$}  node[below]{$\sim$} (m-2-2);
\end{tikzpicture}
\end{equation}
commutes.  Moreover,~\eqref{eqn:hom-tilt-open} is compatible with composition: if $V_1$ has a Weyl filtration, $V_2$ is tilting, and $V_3$ has a good filtration, then the following diagram commutes:

\begin{equation}\label{eqn:hom-tilt-compose}
\hspace{-1.2em}
\begin{tikzpicture}[description/.style={fill=white,inner sep=1.5pt}, baseline=(current  bounding  box.center)]
\tikzstyle{every node}=[font=\small]
\matrix (m) [matrix of math nodes, row sep=3em,
column sep=1em, text height=1.5ex, text depth=.25ex, nodes in empty cells]
{ {\begin{array}{l}\Hom^\bullet(\Sat(V_2)|_{U_\Gamma}, \Sat(V_3)|_{U_\Gamma}) \otimes {}\\ \qquad \Hom^\bullet(\Sat(V_1)|_{U_\Gamma},\Sat(V_2)|_{U_\Gamma})\end{array}} &  \Hom^\bullet(\Sat(V_1)|_{U_\Gamma}, \Sat(V_3)|_{U_\Gamma})   \\
  {\begin{array}{l}\uHom(\Pi_\Gamma(V_2 \otimes \cON), \Pi_\Gamma(V_3 \otimes \cON)) \otimes {}\\ \qquad\uHom(\Pi_\Gamma(V_1 \otimes \cON), \Pi_\Gamma(V_2 \otimes \cON))\end{array}} &  \uHom(\Pi_\Gamma(V_1 \otimes \cON), \Pi_\Gamma(V_3 \otimes \cON))\\ };
\path[->, >=angle 90, font=\scriptsize]
(m-1-1) edge (m-1-2)
(m-1-2) edge (m-2-2)
(m-2-1) edge (m-2-2);
\path[->, >=angle 90, font=\scriptsize, shorten >=0.15cm,shorten <=.15cm]
(m-1-1) edge (m-2-1);

\end{tikzpicture}
\end{equation}
\end{thm}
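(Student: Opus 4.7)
The plan is to reduce to the tilting case by means of tilting resolutions in the quasi-hereditary category $\Rep(\Gv)$, and then transfer the isomorphism from the tilting/parity setting to the general case via a hypercohomology spectral sequence. Concretely, I would choose a finite tilting resolution $V_1 \hookrightarrow U^0 \to \cdots \to U^k$ and a finite tilting coresolution $W^{-\ell} \to \cdots \to W^0 \twoheadrightarrow V_2$ in $\Rep(\Gv)$. Applying $\Sat$ and restricting to $U_\Gamma$ yields bounded complexes of parity sheaves in $\Dbgo(U_\Gamma)$ that are quasi-isomorphic to $\Sat(V_1)|_{U_\Gamma}$ and $\Sat(V_2)|_{U_\Gamma}$, respectively. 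On the coherent side, applying $-\otimes \cON$ (exact) followed by $\Pi_\Gamma$ (exact by Lemma~\ref{lem:preserve-Tilt}) yields complexes of tilting objects in $\PCohN/\PCohN_\Gamma$ quasi-isomorphic to $\Pi_\Gamma(V_1 \otimes \cON) \in \fDelta$ and $\Pi_\Gamma(V_2 \otimes \cON) \in \fnabla$ (using Theorem~\ref{thm:myron}). The equivalence $\Evsat_\Gamma$ of Corollary~\ref{cor:compare-quotient} identifies these complexes termwise.

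I would define $\Pvsat_\Gamma$ through the bicomplex hypercohomology spectral sequence whose $E_1$ page consists of graded $\Hom$-groups between pairs of terms from the two resolutions. Since each pair consists of parity sheaves on the constructible side and tilting objects on the coherent side, Proposition~\ref{prop:hom-tilt-map} identifies the $E_1$ terms, and the naturality expressed in~\eqref{eqn:hom-tilt-map-compose} ensures that the differentials coincide under this identification. On the coherent side, the $\uExt^s$ groups between objects of $\fDelta$ and $\fnabla$ vanish for $s > 0$ (a consequence of the $\uExt$ vanishing following Proposition~\ref{prop:recollement}, combined with the filtration of $\nabla$-objects by proper costandards), so the coherent spectral sequence degenerates, with abutment $\uHom(\Pi_\Gamma(V_1 \otimes \cON), \Pi_\Gamma(V_2 \otimes \cON))$ concentrated in the lowest degree. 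Via the termwise identification, the same degeneration is forced on the constructible side, and its abutment is $\Hom^\bullet_{\Dbgo(U_\Gamma)}(\Sat(V_1)|_{U_\Gamma}, \Sat(V_2)|_{U_\Gamma})$, yielding the desired isomorphism $\Pvsat_\Gamma$.

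The commutativity of~\eqref{eqn:hom-tilt-compare} is built into the construction, since $\Pvsat_\Gamma$ is obtained by restricting and projecting the analogous construction on $\Gr$. The composition compatibility~\eqref{eqn:hom-tilt-compose} is inherited from the termwise compatibility~\eqref{eqn:hom-tilt-map-compose} of $\Evsat$, which passes through the bicomplex totalization compatibly with composition. The main technical challenge is the spectral sequence comparison: one must verify rigorously that the termwise isomorphism between the coherent and constructible $E_1$ pages transports the coherent-side degeneration to the constructible side, so that the constructible Hom-complex in fact computes $\Hom^\bullet_{\Dbgo(U_\Gamma)}$ (noting that the realization functor $\Kb(\ParityGo) \to \Dbgo$ is not in general fully faithful). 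A subsidiary issue is the independence of $\Pvsat_\Gamma$ from the choice of tilting (co)resolutions, which follows from the standard homotopy argument available in any quasi-hereditary category.
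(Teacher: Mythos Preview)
Your approach is correct and amounts to a ``globalized'' version of the paper's argument, doing with one spectral sequence what the paper does by induction on tilting dimension. The paper takes a single short exact sequence $0 \to V_1 \to T \to V_1' \to 0$ at each step (Corollary~\ref{cor:tdim-induction}) and compares the resulting long exact sequences on both sides; the key device is the evenness statement of Corollary~\ref{cor:pcoh-even}, which (via the inductive hypothesis) kills the odd constructible $\Hom$-groups and collapses the long exact sequence to a short one matching the coherent side. Your route sidesteps this parity argument: you use instead the full $\uExt^{>0}$-vanishing between $\fDelta$ and $\fnabla$ to force the coherent spectral sequence to collapse on the $E_2$-page to a single column, and then the termwise identification via $\Evsat_\Gamma$ transports this collapse to the constructible side. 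Your worry about the realization functor is not a genuine obstacle---the stupid-filtration spectral sequence always converges to $\Hom^\bullet$ in $\Dbgo(U_\Gamma)$ regardless of whether realization is full---so the argument goes through. One thing the paper's approach buys is a cleaner uniqueness statement: by first treating the case $\Gamma = \varnothing$ (showing the top arrow of~\eqref{eqn:hom-tilt-compare} is an isomorphism) and then invoking the surjectivity of both vertical maps (Corollary~\ref{cor:std-costd-surj}), the paper concludes that $\Pvsat_\Gamma$ is \emph{uniquely determined} by the commutativity of~\eqref{eqn:hom-tilt-compare}, which disposes of resolution-independence and naturality in one stroke; your homotopy-of-resolutions argument achieves the same end but is slightly less direct.
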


Note that, in contrast with~\eqref{eqn:hom-tilt}, the map $\Pvsat_\Gamma$ is only defined when $V_1$ has a Weyl filtration and $V_2$ a good filtration, and not for general objects of $\Rep(\Gv)$.

\begin{proof}
For brevity, we will write $\Pvsat$ for $\Pvsat_\Gamma$, $U$ for $U_\Gamma$, and $\Pi$ for $\Pi_\Gamma$ throughout the proof.  We proceed by induction on the tilting dimension of $V_1$ and $V_2$.

If $V_1$ and $V_2$ both have tilting dimension $0$, i.e., if they are both tilting, then we simply take $\Pvsat$ to be the map induced by the functor $\Evsat_\Gamma$ from Corollary~\ref{cor:compare-quotient}.  The commutativity of both~\eqref{eqn:hom-tilt-compare} and~\eqref{eqn:hom-tilt-compose} is immediate from that proposition.

Suppose now that the result is known when $V_1$ has tilting dimension${}\le n_1$ and $V_2$ has tilting dimension${}\le n_2$.  Now, let $V_1 \in \Rep(G^\vee)$ have a Weyl filtration and tilting dimension $n_1+1$.  By Corollary~\ref{cor:tdim-induction}, we can find a short exact sequence
\begin{equation}\label{eqn:hom-tilt-induction}
0 \to V_1 \to T \to V_1' \to 0
\end{equation}
where $T$ is tilting and $V_1'$ has a Weyl filtration and tilting dimension $n_1$.

Let $V_2$ have a good filtration and tilting dimension${}\le n_2$.  Let $i$ be an even integer, and consider the commutative diagram in Figure~\ref{fig:hom-tilt-diagram}.  The left-hand column is the long exact sequence induced by~\eqref{eqn:hom-tilt-induction}.  The right-hand column is also induced by~\eqref{eqn:hom-tilt-induction}.  It is a short exact sequence because, by Theorem~\ref{thm:myron} and Lemma~\ref{lem:preserve-Tilt}, $\Pi(V_1' \otimes \cON)$ and $\Pi(V_2 \otimes \cON)$ have standard and costandard filtrations, respectively, in $\PCohN/\PCohN_{\Gamma}$, and so $\uExt^1(\Pi(V_1' \otimes \cON), \Pi(V_2 \otimes \cON)) = 0$.
\begin{figure}
\begin{tikzpicture}[description/.style={fill=white,inner sep=1.5pt}, baseline=(current  bounding  box.center)] 
 \tikzstyle{every node}=[font=\small]

\matrix (m) [matrix of math nodes, row sep=2.5em,
column sep=1em, text height=1ex, text depth=0.25ex]
{ \Hom^{i-1}(\Sat(T)|_U,\Sat(V_2)|_U)=0 &   \\
  \Hom^{i-1}(\Sat(V_1)|_U,\Sat(V_2)|_U) & 0\\
 \Hom^i(\Sat(V_1')|_U,\Sat(V_2)|_U) & \Hom(\Pi(V_1' \otimes \cON), \Pi(V_2 \otimes \cON)\la i\ra)\\ 
 \Hom^i(\Sat(T)|_U,\Sat(V_2)|_U) & \Hom(\Pi(T \otimes \cON), \Pi(V_2 \otimes \cON)\la i \ra)\\
\Hom^i(\Sat(V_1)|_U,\Sat(V_2)|_U) & \Hom(\Pi(V_1 \otimes \cON), \Pi(V_2 \otimes \cON)\la i \ra) \\
\Hom^{i+1}(\Sat(V_1')|_U,\Sat(V_2)|_U) & 0\\ };
\path[->,>=angle 90, font=\scriptsize]
(m-1-1) edge  (m-2-1)
(m-2-1) edge (m-3-1)
(m-3-1) edge node[left]{$u$} (m-4-1)
edge node[above]{$\Pvsat$} node[below]{$\sim$} (m-3-2)
(m-4-1) edge node[left]{$v$} (m-5-1)
edge node[above]{$\Pvsat$} node[below]{$\sim$}(m-4-2)
(m-5-1) edge (m-6-1)
(m-6-1) edge node[above]{$\Pvsat$} node[below]{$\sim$}  (m-6-2)
(m-2-2) edge  (m-3-2)
(m-3-2) edge node[left]{$u'$} (m-4-2)
(m-4-2) edge node[left]{$v'$} (m-5-2)
(m-5-2) edge (m-6-2);

\path[->,>=angle 90, font=\scriptsize, color=white]
(m-3-1) edge node[color=black]{$(*)$} (m-2-4);

\path[dashed, ->,>=angle 90, font=\scriptsize]
(m-5-1) edge (m-5-2);
\end{tikzpicture}
\caption{Commutative diagram for the proof of Theorem~\ref{thm:hom-tilt}}\label{fig:hom-tilt-diagram}
\end{figure}

By induction, we have $\Hom^\bullet(\Sat(T)|_U, \Sat(V_2)|_U) \cong \uHom(\Pi(T \otimes \cON), \Pi(V_2 \otimes \cON))$.  In particular, since $i$ is even, we have $\Hom^{i-1}(\Sat(T)|_U, \Sat(V_2)|_U) = 0$ by Corollary~\ref{cor:pcoh-even}.  Next, because the square $(*)$
involving $u$ and $u'$
commutes, the map $u$ must be injective.  It follows that
\[
\Hom^{i-1}(\Sat(V_1)|_U, \Sat(V_2)|_U) = 0 \qquad\text{for $i-1$ odd.}
\]
The left-hand column of Figure~\ref{fig:hom-tilt-diagram} has now been reduced to a short exact sequence.  It is clear that there is a unique isomorphism
\begin{equation}\label{eqn:hom-tilt-even}
\Pvsat: \Hom^i(\Sat(V_1)|_U,\Sat(V_2)|_U) \simto \Hom(\Pi(V_1 \otimes \cON), \Pi(V_2 \otimes \cON)\la i \ra)
\end{equation}
that makes Figure~\ref{fig:hom-tilt-diagram} commute.  For now, the map we have constructed appears to depend on the choice of~\eqref{eqn:hom-tilt-induction}.  We will address this issue later.

First, let us consider the special case where $\Gamma = \varnothing$, so that $U = \Gr$, and $\Pi$ is the identity functor.  In this case, the solid horizontal arrows in Figure~\ref{fig:hom-tilt-diagram} are given by~\eqref{eqn:hom-tilt}, by induction.  Since the dotted arrow is uniquely determined, it too must be given by~\eqref{eqn:hom-tilt}.  In particular, we have now shown that the top horizontal arrow in~\eqref{eqn:hom-tilt-compare} is an isomorphism for the pair $(V_1,V_2)$.

Now, compare the special case ($\Gamma = \varnothing$) of Figure~\ref{fig:hom-tilt-diagram} with the general case.  Since~\eqref{eqn:hom-tilt-compare} holds for the pairs $(V_1', V_2)$ and $(T,V_2)$ by induction, one can see by an easy diagram chase that it also holds for the pair $(V_1, V_2)$.

Recall from Corollary~\ref{cor:std-costd-surj} that the right-hand vertical map in~\eqref{eqn:hom-tilt-compare} is surjective.  Since the horizontal maps are isomorphisms, the left-hand vertical map must be surjective as well.  Once we know that both vertical maps are surjective, we can see that the bottom horizontal map is uniquely determined.  Thus, the map~\eqref{eqn:hom-tilt-even} is independent of~\eqref{eqn:hom-tilt-induction}.

It remains to show that~\eqref{eqn:hom-tilt-even} is natural in both variables, and that~\eqref{eqn:hom-tilt-compose} commutes.  The former is essentially a special case of the latter, so we focus on the latter.  In the special case $\Gamma = \varnothing$, the commutativity of~\eqref{eqn:hom-tilt-compose} is contained in Proposition~\ref{prop:hom-tilt-map}.  For general $\Gamma$, we deduce the result by a diagram chase using the special case $\Gamma = \varnothing$ together with several instances of~\eqref{eqn:hom-tilt-compare}.

An entirely similar argument establishes the required induction step involving the tilting dimension of $V_2$.
\end{proof}

\section{Proof of the Mirkovi\'c--Vilonen conjecture}
\label{sect:mv}

In this section, $\bk$ may be any field.
We begin with a lemma about sheaves on a single $\Go$-orbit $\Gr_\lambda$ in $\Gr$.
Note that $\Dbgo(\Gr_\lambda)$ is the category of complexes of sheaves whose cohomology sheaves are (locally) constant.  

\begin{lem}\label{lem:even-free}
The following conditions on an object $\cF \in \Dbgo(\Gr_\lambda)$ are equivalent:
\begin{enumerate}
\item $\cF$ is even.
\item $\Hom^\bullet(\cF,\ubk_{\Gr_\lambda})$ is a free $H^\bullet(\Gr_\lambda)$-module generated in even degrees.
\end{enumerate}
\end{lem}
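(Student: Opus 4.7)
The plan is to exploit the simple structure of $\Dbgo(\Gr_\lambda)$: on a single, simply connected $\Go$-orbit, every object has constant cohomology sheaves $\cH^k(\cF) \cong \ubk^{m_k}$, and the extension group
\[
\Ext^1(\ubk[-a],\ubk[-b]) \cong H^{a-b+1}(\Gr_\lambda;\bk)
\]
vanishes whenever $a$ and $b$ have the same parity, because $H^\bullet(\Gr_\lambda;\bk)$ is concentrated in even degrees.  Consequently, any complex whose nonvanishing cohomology sheaves lie entirely in even degrees splits canonically as a direct sum of shifts of $\ubk$.

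For $(1)\Rightarrow(2)$, the observation above yields $\cF\cong\bigoplus_k\ubk[-2k]^{m_{2k}}$ whenever $\cF$ is even, and applying $\Hom^\bullet({-},\ubk)$ produces a direct sum of copies of $H^\bullet(\Gr_\lambda;\bk)$ with generators sitting in the even degrees $2k$.  This is visibly a free $H^\bullet(\Gr_\lambda;\bk)$-module on even-degree generators.

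For $(2)\Rightarrow(1)$, I would pick homogeneous free generators $f_1,\ldots,f_r\in\Hom^\bullet(\cF,\ubk)$ of even degrees $d_1,\ldots,d_r$ and assemble them into a single morphism
\[
f=(f_1,\ldots,f_r)\colon\cF\to\bigoplus_{i=1}^r\ubk[d_i].
\]
The pullback $f^*$ on $\Hom^\bullet({-},\ubk)$ carries the tautological free generators of the right-hand side (the projections onto each summand) precisely to the $f_i$, so $f^*$ is an isomorphism and the cone $C$ of $f$ satisfies $\Hom^\bullet(C,\ubk)=0$.

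The one potentially delicate point is to deduce $C=0$ from $\Hom^\bullet(C,\ubk)=0$.  Arguing by contradiction, if $k$ is the largest integer with $\cH^k(C)\ne 0$, then the truncation map $C\to\tau_{\ge k}C\cong\cH^k(C)[-k]$ composed with any surjection $\cH^k(C)\cong\ubk^{m_k}\twoheadrightarrow\ubk$ gives a nonzero element of $\Hom^{-k}(C,\ubk)$, contradicting the hypothesis.  Hence $f$ is an isomorphism, so $\cF$ is a direct sum of even shifts of $\ubk$ and is therefore even.  I do not expect any serious obstacle: the whole argument rests on the even cohomology of $\Gr_\lambda$ and the standard ``formality by parity'' mechanism.
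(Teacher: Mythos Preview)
Your proposal is correct and follows essentially the same route as the paper: for $(2)\Rightarrow(1)$ you build the same test map $f\colon\cF\to\bigoplus_i\ubk[d_i]$ from a homogeneous free basis, show its cone has vanishing $\Hom^\bullet({-},\ubk)$, and kill the cone via the top-cohomology truncation argument, exactly as the paper does.  The only difference is cosmetic: for $(1)\Rightarrow(2)$ the paper simply asserts that every even object is a direct sum of even shifts of $\ubk$, whereas you justify this via the odd-degree $\Ext^1$-vanishing (a helpful elaboration, though ``canonically'' is a slight overstatement since the splitting need not be unique).
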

\begin{proof}
Every even object is a direct sum of objects of the form $\ubk_{\Gr_\lambda}[2n]$, so it is clear that the first condition implies the second.  Suppose now that the second condition holds.  Choose a basis $e_1, \ldots, e_k$ for $\Hom^\bullet(\cF,\ubk_{\Gr_\lambda})$ as a $H^\bullet(\Gr_\lambda)$-module, and suppose each $e_i$ is homogeneous of degree $2n_i$.  That is, each $e_i$ is a morphism $\cF \to \ubk_{\Gr_\lambda}[2n_i]$.  Let $\cF' = \bigoplus_{i=1}^k \ubk_{\Gr_\lambda}[2n_i]$, and consider the map
$
f = (e_1, \ldots, e_k): \cF \to \cF'.
$
The map $\Hom^\bullet(\cF',\ubk_{\Gr_\lambda}) \to \Hom^\bullet(\cF,\ubk_{\Gr_\lambda})$ induced by $f$ is surjective (all the $e_i$ lie in its image), and since these are finite-dimensional graded vector spaces, it is an isomorphism.  Therefore, letting $\cG$ denote the cone of $f: \cF \to \cF'$, we have
\begin{equation}\label{eqn:even-free-cone}
\Hom^\bullet(\cG,\ubk_{\Gr_\lambda}) = 0.
\end{equation}
We claim that $\cG = 0$.  If not, let $n$ be the top degree in which $\mathcal{H}^n(\cG) \ne 0$. Then, there is a nonzero truncation morphism $\cG \to \tau_{\ge n}\cG \cong \mathcal{H}^n(\cG)[-n]$.  The constant sheaf $\mathcal{H}^n(\cG)$ is a direct sum of copies of $\ubk_{\Gr_\lambda}$, so there is a nonzero map $\cG \to \ubk_{\Gr_\lambda}[-n]$, contradicting~\eqref{eqn:even-free-cone}.  Thus, $\cG = 0$, and $f$ is an isomorphism.  In particular, $\cF \cong \bigoplus \ubk_{\Gr_\lambda}[2n_i]$ is even.
\end{proof}

\begin{thm}[{cf.~\cite[Conjecture~1.10]{jmw:parity}}]\label{thm:jmwconj}
Assume that $\chr \bk$ is a JMW prime for $\Gv$.  Then the perverse sheaf $\cIstd(\lambda,\bk)$ is $*$-parity.
\end{thm}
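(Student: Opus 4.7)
The plan is, for each $\mu \le \lambda$, to prove that the stalk $i_\mu^*\cIstd(\lambda)$ is even; since $\cIstd(\lambda)$ is supported on $\overline{\Gr_\lambda}$, this will give the $*$-parity property. By Lemma~\ref{lem:even-free}, it suffices to show that $\Hom^\bullet(i_\mu^*\cIstd(\lambda), \ubk_{\Gr_\mu})$ is a free $H^\bullet(\Gr_\mu)$-module concentrated in even degrees.

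Fix $\mu \le \lambda$ and set $\Gamma = \{\nu \in \bXp \mid \nu < \mu\}$, so that $\Gr_\mu$ is closed in $U_\Gamma$ via some $k_\mu$, and $\cIcos(\mu)|_{U_\Gamma} \cong k_{\mu*}\ubk_{\Gr_\mu}[\dim \Gr_\mu]$. Adjunction gives
\[
\Hom^\bullet_{\Dbgo(U_\Gamma)}(\cIstd(\lambda)|_{U_\Gamma}, \cIcos(\mu)|_{U_\Gamma}) \cong \Hom^{\bullet - \dim \Gr_\mu}(i_\mu^*\cIstd(\lambda), \ubk_{\Gr_\mu}),
\]
and since $\dim \Gr_\mu = \la 2\rho, \mu\ra$ is even, parity is preserved by the degree shift. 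Because $\wey(\lambda)$ has a Weyl filtration and $\cow(\mu)$ a good filtration, Theorem~\ref{thm:hom-tilt} identifies the left-hand side with $\uHom_{\PCohN/\PCohN_\Gamma}(\Pi_\Gamma(\wey(\lambda)\otimes\cON), \Pi_\Gamma(\cow(\mu)\otimes\cON))$, and Corollary~\ref{cor:pcoh-even} shows this is concentrated in even degrees.

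For freeness, I would use Theorem~\ref{thm:myron} together with Lemma~\ref{lem:preserve-Tilt} to observe that $\Pi_\Gamma(\wey(\lambda)\otimes\cON)$ inherits a standard filtration and $\Pi_\Gamma(\cow(\mu)\otimes\cON)$ a costandard one in the quotient. Because $\cow(\mu)\otimes\cON \in \PCohN_{\le\mu}$ and $\Gamma$ contains every $\nu < \mu$, the only surviving composition factor is $L_\mu$, so the costandard filtration of $\Pi_\Gamma(\cow(\mu)\otimes\cON)$ involves only $\nabla_\mu$'s. Minimality of $\mu$ in $\bXp\setminus\Gamma$ together with Lemma~\ref{lem:tilt-cotilt}\eqref{it:tilt-min} makes $\nabla_\mu$ tilting, and the $\Ext$-vanishing implicit in Proposition~\ref{prop:recollement} then gives $\uExt^1(\nabla_\mu,\nabla_\mu)=0$, so that $\Pi_\Gamma(\cow(\mu)\otimes\cON)$ decomposes as a direct sum of Tate twists of $\nabla_\mu$. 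Lemma~\ref{lem:delta-nabla-free} gives freeness of each $\uHom(\Pi_\Gamma(\wey(\lambda)\otimes\cON), \nabla_\mu\la n\ra)$ over $\uEnd(\nabla_\mu)$, which promotes to freeness of the whole $\uHom$-group over $\uEnd(\Pi_\Gamma(\cow(\mu)\otimes\cON))$.

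Finally, the compose-compatibility diagram~\eqref{eqn:hom-tilt-compose} supplies an algebra isomorphism $\uEnd(\Pi_\Gamma(\cow(\mu)\otimes\cON)) \cong \uEnd(\cIcos(\mu)|_{U_\Gamma}) \cong H^\bullet(\Gr_\mu)$ compatible with the module structure on the Hom-space, at which point Lemma~\ref{lem:even-free} concludes. I expect the main obstacle to be this last bookkeeping step: Theorem~\ref{thm:hom-tilt} a priori delivers only a graded-vector-space isomorphism, and careful appeal to~\eqref{eqn:hom-tilt-compose} will be required both to identify the endomorphism rings on the two sides and to certify that freeness over $\uEnd(\nabla_\mu)$ (the object Lemma~\ref{lem:delta-nabla-free} literally gives) transports, via Morita equivalence of $\uEnd(\nabla_\mu)$ with $\uEnd(\Pi_\Gamma(\cow(\mu)\otimes\cON))$, to freeness over $H^\bullet(\Gr_\mu)$.
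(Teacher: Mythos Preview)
Your overall strategy is the paper's: restrict to $U_\Gamma$ with $\Gamma = \{\nu < \mu\}$, translate the stalk computation via Theorem~\ref{thm:hom-tilt} into a $\uHom$ in $\PCohN/\PCohN_\Gamma$, prove freeness and even-degree generation there, and conclude with Lemma~\ref{lem:even-free}. Two points need correction.

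First, Theorem~\ref{thm:hom-tilt} and all of Section~\ref{sect:extgr} rely on the standing hypothesis~\eqref{eqn:reasonable} on $\Gv$, which is not assumed in the theorem you are proving. The paper begins with a reduction: each component of $\Gr$ for $G$ is isomorphic, as a stratified variety, to a component of $\Gr$ for $G/\Cent(G)$, and in type $A$ one can further pass from $\mathrm{PGL}(n)$ to $\mathrm{GL}(n)$; hence it suffices to treat groups as in~\eqref{eqn:g-reduce}, for which~\eqref{eqn:reasonable} holds. You must include this step.

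Second, your appeal to~\eqref{eqn:hom-tilt-compose} is not licensed: that diagram requires the middle module $V_2$ to be \emph{tilting}, and $\cow(\mu)$ is not. So neither the ring isomorphism for $\uEnd(\Pi_\Gamma(\cow(\mu)\otimes\cON))$ (which would also need $V_1=\cow(\mu)$ to have a Weyl filtration) nor the compatibility of module structures follows from it. The paper avoids this by working with $\til(\mu)$ throughout: on the topological side nothing changes, since $\cT(\mu)|_{U_\Gamma} \cong i_*\ubk_{\Gr_\mu}[\dim\Gr_\mu] \cong \cIcos(\mu)|_{U_\Gamma}$, but now~\eqref{eqn:hom-tilt-compose} applies directly with $V_2 = V_3 = \til(\mu)$. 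As a bonus, $\Pi_\Gamma(\til(\mu)\otimes\cON)$ is a single twist of the indecomposable tilting $T_\mu$, which by Lemma~\ref{lem:tilt-cotilt}\eqref{it:tilt-min} equals $\nabla_\mu$ up to twist; Lemma~\ref{lem:delta-nabla-free} then applies immediately and your direct-sum/Morita detour disappears.

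One small correction: $\dim\Gr_\mu = \la 2\rho,\mu\ra$ is not always even (take $G=\mathrm{SL}_2$). Its parity is constant on each component of $\Gr$ and hence agrees with that of $\dim\Gr_\lambda$; the paper accordingly treats the even and odd cases in parallel.
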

More precisely, $\cIstd(\lambda,\bk)$ is $*$-even (resp.~$*$-odd) if $\dim \Gr_\lambda$ is even (resp.~odd).
\begin{proof}
Let $\bar\bk$ be an algebraic closure of $\bk$.  For any $x\in\Gr$, we have that $\cIstd(\lambda,\bar\bk)_x\cong\cIstd(\lambda,\bk)_x \otimes \bar\bk$.  Thus, if $\cIstd(\lambda,\bar\bk)$ were known to be $*$-parity, the result would follow for $\cIstd(\lambda,\bk)$.  In other words, it suffices to prove the theorem for algebraically closed fields.  For the remainder of the proof, we assume that $\bk$ is algebraically closed.

It is well known that every component of $\Gr$ is isomorphic to a component of the affine Grassmannian for the group $G/\Cent(G)$, via an isomorphism compatible with the stratification by $\Go$-orbits.  For groups of type $A$, there is a similar statement in the opposite direction: every component of the affine Grassmannian of $\mathrm{PGL}(n)$ is isomorphic to some component of the affine Grassmannian of $\mathrm{GL}(n)$.  These two observations imply that to prove the theorem in general, it suffices to consider groups of the form
\begin{equation}\label{eqn:g-reduce}
\mathrm{GL}(n_1) \times \cdots \times \mathrm{GL}(n_k) \times \left(
\begin{array}{c}
\text{a semisimple group of adjoint type}\\
\text{containing no factors of type $A$}
\end{array}
\right)
\end{equation}
Assume henceforth that $G$ has this form.  Then $\Gv$ is a product of $\mathrm{GL}(n_i)$'s with a semisimple, simply-connected group containing no factors of type $A$.  According to~\cite[\S2.9]{jan:nort}, such a group satisfies~\eqref{eqn:reasonable}, so we can invoke the results of~\S\ref{sect:extgr}.

For simplicity, let us assume that $\dim \Gr_\lambda$ is even; the argument in the odd case is the same.  Let $\mu \in \bXp$ be a weight such that $\Gr_\mu \subset \overline{\Gr_\lambda}$.  Recall that this implies that $\dim \Gr_\mu$ is also even.

Let $\Gamma \subset \bXp$ be the set of weights that are strictly smaller than $\mu$.  Let $U = U_\Gamma = \Gr \setminus \Gr_\Gamma$, and let $j=j_\Gamma$ as in \eqref{eqn:openinclu}.  By Theorem~\ref{thm:hom-tilt}, we have a natural isomorphism
\[
\Hom^\bullet(j^*\cIstd(\lambda,\bk),j^*\cT(\mu,\bk)) \cong \uHom(\Pi(\wey(\lambda) \otimes \cON), \Pi(\til(\mu) \otimes \cON)).
\]
In particular, by~\eqref{eqn:hom-tilt-compose}, this is an isomorphism of graded modules over the ring
\[
\Hom^\bullet(j^*\cT(\mu,\bk),j^*\cT(\mu,\bk)) \cong \uEnd(\Pi(\til(\mu) \otimes \cON)).
\]
Finally, in the quotient category $\PCohN/\PCohN_\Gamma$, the tilting object $\Pi(\til(\mu) \otimes \cON)$ coincides with the costandard object $\Pi(\cow(\mu) \otimes \cON)$, by Lemma~\ref{lem:tilt-cotilt}\eqref{it:tilt-min}.  By Lemma~\ref{lem:delta-nabla-free}, the space $\uHom(\Pi(\wey(\lambda) \otimes \cON), \Pi(\cow(\mu) \otimes \cON))$ is a free $\uEnd(\Pi(\cow(\mu) \otimes \cON))$-module, and by Corollary~\ref{cor:pcoh-even}, it is generated in even degrees.  

Now, let $i: \Gr_\mu \to U$ be the inclusion map.  This is a closed inclusion, and we clearly have $j^*\cT(\mu,\bk) \cong i_*\ubk_{\Gr_\mu}[\dim \Gr_\mu]$.  Rephrasing the conclusion of the previous paragraph, we have that
\[
\Hom^\bullet(j^*\cIstd(\lambda,\bk),i_*\ubk_{\Gr_\mu}) \cong
\Hom^\bullet(i^*j^*\cIstd(\lambda,\bk), \ubk_{\Gr_\mu})
\]
is a free module generated in even degrees over the ring
\[
\Hom^\bullet(i_*\ubk_{\Gr_\mu}, i_*\ubk_{\Gr_\mu}) \cong H^\bullet(\Gr_\mu).
\]
By Lemma~\ref{lem:even-free}, $i^*j^*\cIstd(\lambda,\bk)$ is even, as desired.
\end{proof}

\begin{thm}[cf.~{\cite[Conjecture~6.3]{mv1}} or~{\cite[Conjecture~13.3]{mv}}]\label{thm:mvconj}
If $p$ is a JMW prime for $\Gv$, then the stalks of $\cIstd(\lambda,\Z)$ have no $p$-torsion.
\end{thm}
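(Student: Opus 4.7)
The plan is to deduce the theorem from the $*$-parity statement of Theorem~\ref{thm:jmwconj} by a standard universal-coefficient argument. Since both $\cIstd(\lambda,\Z)$ and $\cIstd(\lambda,\F_p)$ arise as shifted $!$-extensions of constant sheaves from $\Gr_\lambda$, there is a canonical base-change isomorphism
\[
\cIstd(\lambda,\Z) \otimes^L_\Z \F_p \simeq \cIstd(\lambda, \F_p)
\]
in the constructible derived category on $\Gr$. Fix $\mu \in \bXp$ with $\mu \le \lambda$ and a point $x \in \Gr_\mu$, and set $M_k := \cH^k(i_x^*\cIstd(\lambda,\Z))$. Taking stalks at $x$ and applying the universal coefficient theorem over the PID $\Z$ produces, for each $k \in \Z$, a short exact sequence
\[
0 \to M_k \otimes_\Z \F_p \to \cH^k(i_x^*\cIstd(\lambda, \F_p)) \to \mathrm{Tor}_1^\Z(M_{k+1}, \F_p) \to 0.
\]

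By Theorem~\ref{thm:jmwconj}, the middle term vanishes for every $k$ whose parity differs from that of $\dim \Gr_\lambda$. From this, two conclusions follow at once: first, for each such $k$ one has $\mathrm{Tor}_1^\Z(M_{k+1}, \F_p) = 0$, so $M_{k+1}$ is $p$-torsion-free (and $k+1$ has the parity matching $\dim \Gr_\lambda$); second, $M_k \otimes_\Z \F_p = 0$, i.e.\ $M_k$ is $p$-divisible.

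Each $M_k$ is a finitely generated abelian group, since it is the stalk cohomology of a $\Z$-constructible complex on a variety of finite type. A finitely generated $p$-divisible abelian group is finite of order coprime to $p$, and in particular has no $p$-torsion. Combined with the first consequence above, this shows $M_k$ is $p$-torsion-free for every $k$, every $\mu$, and every $x \in \Gr_\mu$, which is the desired statement. The entire geometric content of the theorem has been absorbed into Theorem~\ref{thm:jmwconj}; the argument sketched here is essentially formal, and the only minor points to verify are the base-change isomorphism and the finite generation of the $M_k$.
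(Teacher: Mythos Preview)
Your proposal is correct and follows essentially the same route as the paper: both reduce to Theorem~\ref{thm:jmwconj} via a universal-coefficient argument, the paper phrasing it through the splitting of the stalk complex over $\Z$ and the observation that $p$-torsion in any $M_k$ would force $\cIstd(\lambda,\Z)_x \Lotimes \bar\F_p \cong \cIstd(\lambda,\bar\F_p)_x$ to have cohomology in both parities. One cosmetic point: Theorem~\ref{thm:jmwconj} is proved for algebraically closed $\bk$, so either work with $\bar\F_p$ throughout (as the paper does, citing \cite[Proposition~8.1(a)]{mv} for the base change) or note that $*$-parity descends along the faithfully flat extension $\F_p \hookrightarrow \bar\F_p$.
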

\begin{proof}
Let $M$ be a $\Z$-module.  It is a routine exercise to show that if $M$ has $p$-torsion, then $H^i(M \Lotimes \F_p) \ne 0$ for both $i = 0$ and $i = -1$.  Now, let $x \in \Gr$, and consider the stalk $\cIstd(\lambda,\Z)_x$, which is an object in the derived category of finitely generated $\Z$-modules.  Since $\Z$ has global dimension~$1$, $\cIstd(\lambda,\Z)_x$ is isomorphic to the direct sum of its cohomology modules, and if any cohomology module had $p$-torsion, the object $\cIstd(\lambda,\Z)_x \Lotimes \F_p$ would have nonzero cohomology in both even and odd degrees.  But by~\cite[Proposition~8.1(a)]{mv}, we have
\[
\cIstd(\lambda,\Z)_x \Lotimes \F_p \cong \cIstd(\lambda,\F_p)_x,
\]
and Theorem~\ref{thm:jmwconj} tells us that the latter cannot have cohomology in both even and odd degrees.  Thus, $\cIstd(\lambda,\Z)_x$ has no $p$-torsion.
\end{proof}

\appendix
\section{Sheaves on non-locally compact spaces}
\label{app:nonloccpt}

Let $X$ and $Y$ be Hausdorff topological spaces, and let $h: Y \to X$ be a continuous map.  If $X$ is not locally compact, then $h^!$ may not be defined, and $h_!$ may fail to have some familiar properties.  However, if $h$ is an inclusion of a locally closed subset, these difficulties can be largely circumvented.  In this appendix, we briefly explain how to define the functors $h_!$ and $h^!$, and we discuss some of their properties.

Let $\Sh(X)$ denote the category of sheaves of $\bk$-modules on $X$.  The discussion below will make heavy use of the two functors $\Sh(X) \to \Sh(X)$ discussed in~\cite[Proposition~2.3.6 and Definition~2.3.8]{ks}, denoted
\[
\cF \mapsto \cF_U
\qquad\text{and}\qquad
\cF \mapsto \Gamma_U(\cF).
\]
As in the main body of the paper, $h_!$ and $h_*$ will always denote derived functors.  Their non-derived analogues will be denoted by $^\circ h_!$ and ${}^\circ h_*$, respectively.  

As explained in~\cite[Eq.~(2.5.1)]{ks}, the functor $^\circ h_!: \Sh(Y) \to \Sh(X)$ (and hence its derived functor $h_!$) can be defined without any local compactness assumption.  According to~\cite[Proposition~2.5.4]{ks}, when $h$ is a locally closed inclusion, we have a natural isomorphism
\begin{equation}\label{eqn:oh_!-defn}
^\circ h_! h^*\cF \cong \cF_Y.
\end{equation}
On the other hand, inspired by~\cite[Proposition~3.1.12]{ks}, we define a new left-exact functor $^\circ h^!: \Sh(X) \to \Sh(Y)$ by
\begin{equation}\label{eqn:oh^!-defn}
^\circ h^!(\cF) = h^* \Gamma_Y(\cF).
\end{equation}
Let $h^!: \Db(X) \to \Db(Y)$ be its right derived functor.

\begin{lem}\label{lem:h!-basic}
Let $h: Y \to X$ be a locally closed inclusion.  
\begin{enumerate}
\item The functor ${}^\circ h_!$ is exact.\label{it:h!-exact}
\item If $h$ is an open inclusion, then $h^! \cong h^*$.\label{it:h!-open}
\item There are natural isomorphisms $h_! h^*\cF \cong \cF_Y$ and $h^!\cF \cong h^* R\Gamma_Y(\cF)$.\label{it:h!-defn}
\item Let $k: Z \to Y$ be another locally closed inclusion.  There are natural isomorphisms $(h \circ k)_! \cong h_! \circ k_!$ and $(h \circ k)^! \cong k^! \circ h^!$.\label{it:hk!}
\item There are natural isomorphisms $h_!\cF \cong (h_*\cF)_Y$ and $h_*h^!\cF \cong R\Gamma_Y(\cF)$.\label{it:h!-other}
\end{enumerate}
\end{lem}
Comparing part~\eqref{it:h!-defn} with~\cite[Proposition~3.1.12]{ks}, we see that our definition of $h^!$ agrees with the usual one in case $X$ is locally compact.  For part~\eqref{it:hk!}, the usual proofs (see~\cite[Eq.~(2.6.6) and Proposition~3.1.8]{ks}) require the use of $c$-soft sheaves, which may not be available on $X$.
\begin{proof}
\eqref{it:h!-exact}~This is~\cite[Proposition~2.5.4(i)]{ks}.

\eqref{it:h!-open}~We see from~\cite[Proposition~2.3.9(iii)]{ks} that $^\circ h^! \cong h^* \circ {}^\circ h_* \circ h^* \cong h^*$, and so $h^! \cong h^*$ as well.

\eqref{it:h!-defn}~According to~\cite[Propositions~2.3.6]{ks} and part~\eqref{it:h!-exact}, every functor in~\eqref{eqn:oh_!-defn} is exact, so its derived version follows.  In~\eqref{eqn:oh^!-defn}, since $h^*$ is exact, the derived functor of the right-hand side is just $h^*R\Gamma_Y$.

\eqref{it:hk!}~Since $^\circ h_!$ and $^\circ k_!$ are exact, the first assertion follows immediately from its non-derived analogue, found in~\cite[Eq.~(2.5.3)]{ks}.  Next, a routine argument (cf.~\cite[Proposition~2.4.6]{ks}) shows that $^\circ h^!$ takes flabby sheaves to flabby sheaves, and that its derived functor $h^!$ can be computed using flabby resolutions.  Therefore, the desired result follows from its non-derived analogue, which says that
\[
k^* \Gamma_Z h^* \Gamma_Y \cong (h \circ k)^* \Gamma_Z.
\]
(Note that on the left-hand side, $\Gamma_Z$ is a functor on $\Sh(Y)$, while on the right-hand side, it is a functor on $\Sh(X)$.)  This can be checked directly from the definition, in the spirit of~\cite[Proposition~2.3.9]{ks}.

\eqref{it:h!-other}~Since $h^* h_*\cF \cong \cF$, it follows immediately from part~\eqref{it:h!-defn} that $h_!\cF \cong (h_*\cF)_Y$.  For the second assertion, thanks to part~\eqref{it:hk!}, it suffices to consider the cases where $Y$ is open or closed.  If $Y$ is open, the non-derived analogue $^\circ h_* {}^\circ h^! \cong \Gamma_Y$ is~\cite[Proposition~2.3.9(iii)]{ks}, and the derived version follows because $^\circ h^! \cong h^*$ takes flabby sheaves to flabby sheaves.  If $Y$ is closed, one can check from the definitions that ${}^\circ h_* {}^\circ h^! = {}^\circ h_*  h^*  \Gamma_Y \cong \Gamma_Y$.  Since ${}^\circ h_*$ is exact, it follows that $h_* h^! \cong R\Gamma_Y$.
\end{proof}

\begin{lem}
The functor $h^!$ is right adjoint to $h_!$.
\end{lem}
\begin{proof}
It follows from~\cite[Eq.~(2.6.9)]{ks} that $\Hom(\cF_Y, \cG) \cong \Hom(\cF, R\Gamma_Y(\cG))$.  We therefore have the following sequence of natural isomorphisms:
\begin{multline*}
\Hom(h_!(\cF),\cG) \cong \Hom((h_*\cF)_Y,\cG) \cong \Hom(h_*\cF, R\Gamma_Y(\cG)) \cong \\
\Hom(h_*\cF,h_*h^!\cG) \cong \Hom(h^*h_*\cF,h^!\cG) \cong \Hom(\cF,h^!\cG). \qedhere
\end{multline*}
\end{proof}

\begin{lem}\label{lem:openclosed}
Let $i: Z \to X$ be the inclusion of a closed subset, and let $j: U \to X$ be the inclusion of the complementary open subset.  For any $\cF \in \Db(X)$, there are functorial distinguished triangles
\begin{gather*}
i_*i^!\cF \to \cF \to j_*j^*\cF \to, \\
j_!j^*\cF \to \cF \to i_*i^*\cF.
\end{gather*}
\end{lem}
\begin{proof}
According to~\cite[Eq.~(2.6.32)]{ks}, there is a functorial distinguished triangle $R\Gamma_Z(\cF) \to \cF \to R\Gamma_U(\cF) \to$.  Using Lemma~\ref{lem:h!-basic}\eqref{it:h!-other}, we obtain the first distinguished triangle above.  The second follows similarly from~\cite[Eq.~(2.6.33)]{ks}.
\end{proof}

\begin{lem}\label{lem:basechange}
Suppose we have a cartesian square
\[
\begin{tikzpicture}[description/.style={fill=white,inner sep=1.5pt}] 
\matrix (m) [matrix of math nodes, row sep=2.5em,
column sep=2.5em, text height=1ex, text depth=0.25ex, nodes in empty cells]
{ Y' & X' \\
  Y & X \\};
\path[->,>=angle 90, font=\scriptsize]
(m-1-1) edge node[above]{$h'$} (m-1-2)
edge node[left]{$f'$} (m-2-1)
(m-1-2) edge node[right]{$f$} (m-2-2)
(m-2-1) edge node[above]{$h$} (m-2-2);
\end{tikzpicture}
\]
where $h$ and $h'$ are locally closed inclusions.  Then we have $h^!f_* \cong f'_*h'{}^!$.
\end{lem}
\begin{proof}
According to~\cite[Eq.~(2.3.20)]{ks}, we have $\Gamma_Y {}^\circ f_* \cong {}^\circ f_*\Gamma_{Y'}$.  Since all these functors take flabby sheaves to flabby sheaves, we also have the derived version $R\Gamma_Y f_* \cong f_* R\Gamma_{Y'}$.  In other words, $h_*h^!f_* \cong f_* h'_* h'{}^! \cong h_* f'_* h'{}^!$.  Compose with $h^*$ to obtain $h_!f^* \cong f'_*h'{}^!$.
\end{proof}


\end{document}